
\documentclass[12pt]{amsart}
\headheight=8pt     \topmargin=0pt
\textheight=624pt   \textwidth=432pt
\oddsidemargin=18pt \evensidemargin=18pt

\usepackage{amssymb}
\usepackage{verbatim}
\usepackage{hyperref}
\usepackage{color}

\begin{document}

\newtheorem{theorem}{Theorem}    
\newtheorem{proposition}[theorem]{Proposition}
\newtheorem{conjecture}[theorem]{Conjecture}
\def\theconjecture{\unskip}
\newtheorem{corollary}[theorem]{Corollary}
\newtheorem{lemma}[theorem]{Lemma}
\newtheorem{sublemma}[theorem]{Sublemma}
\newtheorem{fact}[theorem]{Fact}
\newtheorem{observation}[theorem]{Observation}
\theoremstyle{definition}
\newtheorem{definition}{Definition}
\newtheorem{notation}[definition]{Notation}
\newtheorem{remark}[definition]{Remark}
\newtheorem{question}[definition]{Question}
\newtheorem{questions}[definition]{Questions}
\newtheorem{example}[definition]{Example}
\newtheorem{problem}[definition]{Problem}
\newtheorem{exercise}[definition]{Exercise}

\numberwithin{theorem}{section}
\numberwithin{definition}{section}
\numberwithin{equation}{section}

\def\reals{{\mathbb R}}
\def\torus{{\mathbb T}}
\def\heis{{\mathbb H}}
\def\integers{{\mathbb Z}}
\def\rationals{{\mathbb Q}}
\def\naturals{{\mathbb N}}
\def\complex{{\mathbb C}\/}
\def\distance{\operatorname{distance}\,}
\def\support{\operatorname{support}\,}
\def\dist{\operatorname{dist}\,}
\def\Span{\operatorname{span}\,}
\def\degree{\operatorname{degree}\,}
\def\kernel{\operatorname{kernel}\,}
\def\dim{\operatorname{dim}\,}
\def\codim{\operatorname{codim}}
\def\trace{\operatorname{trace\,}}
\def\Span{\operatorname{span}\,}
\def\dimension{\operatorname{dimension}\,}
\def\codimension{\operatorname{codimension}\,}
\def\nullspace{\scriptk}
\def\kernel{\operatorname{Ker}}
\def\ZZ{ {\mathbb Z} }
\def\p{\partial}
\def\rp{{ ^{-1} }}
\def\Re{\operatorname{Re\,} }
\def\Im{\operatorname{Im\,} }
\def\ov{\overline}
\def\eps{\varepsilon}
\def\lt{L^2}
\def\diver{\operatorname{div}}
\def\curl{\operatorname{curl}}
\def\etta{\eta}
\newcommand{\norm}[1]{ \|  #1 \|}
\def\expect{\mathbb E}
\def\bull{$\bullet$\ }
\def\det{\operatorname{det}}
\def\Det{\operatorname{Det}}
\def\multiR{\mathbf R}
\def\bestA{\mathbf A}
\def\Apq{\mathbf A_{p,q}}
\def\Apqr{\mathbf A_{p,q,r}}
\def\rank{\mathbf r}
\def\diameter{\operatorname{diameter}}
\def\bp{\mathbf p}
\def\bff{\mathbf f}
\def\bg{\mathbf g}
\def\essd{\operatorname{essential\ diameter}}

\def\mab{\max(|A|,|B|)}
\def\t2{\tfrac12}
\def\tatb{tA+(1-t)B}

\newcommand{\abr}[1]{ \langle  #1 \rangle}

\newcommand{\Norm}[1]{ \Big\|  #1 \Big\| }
\newcommand{\set}[1]{ \left\{ #1 \right\} }
\def\one{{\mathbf 1}}
\newcommand{\modulo}[2]{[#1]_{#2}}

\def\scriptf{{\mathcal F}}
\def\scripts{{\mathcal S}}
\def\scriptq{{\mathcal Q}}
\def\scriptg{{\mathcal G}}
\def\scriptm{{\mathcal M}}
\def\scriptb{{\mathcal B}}
\def\scriptc{{\mathcal C}}
\def\scriptt{{\mathcal T}}
\def\scripti{{\mathcal I}}
\def\scripte{{\mathcal E}}
\def\scriptv{{\mathcal V}}
\def\scriptw{{\mathcal W}}
\def\scriptu{{\mathcal U}}
\def\scripta{{\mathcal A}}
\def\scriptr{{\mathcal R}}
\def\scripto{{\mathcal O}}
\def\scripth{{\mathcal H}}
\def\scriptd{{\mathcal D}}
\def\scriptl{{\mathcal L}}
\def\scriptn{{\mathcal N}}
\def\scriptp{{\mathcal P}}
\def\scriptk{{\mathcal K}}
\def\scriptP{{\mathcal P}}
\def\scriptj{{\mathcal J}}
\def\scriptz{{\mathcal Z}}
\def\frakv{{\mathfrak V}}
\def\frakG{{\mathfrak G}}
\def\frakA{{\mathfrak A}}
\def\frakB{{\mathfrak B}}
\def\frakC{{\mathfrak C}}

\author{Michael Christ}
\address{
        Michael Christ\\
        Department of Mathematics\\
        University of California \\
        Berkeley, CA 94720-3840, USA}
\email{mchrist@math.berkeley.edu}
\thanks{Research supported in part by NSF grant DMS-0901569.}


\date{May 1, 2012. Revised July 20, 2012.}

\title {Near Equality In The Two-dimensional Brunn-Minkowski Inequality} 
\begin{abstract}
If a pair $(A,B)$ of subsets of $\reals^2$ nearly realizes equality in the
Brunn-Minkowski inequality, in the sense that the measure  of the associated
sumset $A+B$ is nearly equal to $(|A|^{1/2}+|B|^{1/2})^2$,
then $(A,B)$ must nearly coincide with a pair of homothetic convex sets.
\end{abstract}
\maketitle

\section{Introduction}

This is one of a series of papers 
\cite{youngdiscrete}, \cite{christradon}, \cite{christrieszsobolev}, \cite{christyoungest}
concerned with the interplay between linear structure, analysis, and affine-invariant geometry of Euclidean spaces.
Earlier papers in this series have built on a characterization of cases of near equality
in the one-dimensional Brunn-Minkowski inequality to deduce analytic consequences in both
one and higher dimensions. 
The purpose of the present work is to characterize cases of near equality in the Brunn-Minkowski
inequality itself, for subsets of $\reals^2$. 

Let $A,B\subset\reals^d$ be sets. 
Their algebraic sum $A+B$ is defined to be
\[A+B=\set{a+b: a\in A \text{ and } b\in B}.\] 
For $A\subset\reals^d$, and $t\ge 0$, 
\[tA+v=\set{ta: a\in A}.\]
Thus for $s,t\in[0,\infty)$, $sA+tB=\set{sa+tb: a\in A \text{ and } b\in B}$.
If $A,B$ are Borel measurable, then $A+B$ is Lebesgue measurable. 

The Brunn-Minkowski inequality states that for any two nonempty Borel sets $A,B\subset\reals^d$,
\begin{equation} \label{eq:BMadd} |A+B|^{1/d}\ge |A|^{1/d}+|B|^{1/d}.  \end{equation}
The assumption of Borel measurability ensures that the sumset $A+B$ is Lebesgue measurable;
there are other versions of the inequality for more general sets.
A valuable survey is the article of Gardner \cite{gardner}.
If $A,B$ are Borel sets with finite measures,
then equality holds in \eqref{eq:BMadd} if and only if there exist a compact convex set $\scriptk$,
scalars $\alpha,\beta\ge 0$, and vectors $u,v\in\reals^d$ such that
$A\subset \alpha\scriptk+u$, $B\subset\beta\scriptk+v$, 
$|(\alpha\scriptk+u)\setminus A|=0$ and $|(\beta \scriptk+v)\setminus B|=0$
\cite{henstock},\cite{HO}.

Near equality can arise only in the obvious way:
\begin{theorem} \label{thm:BMadd}
For any compact subset $\Lambda\subset(0,\infty)$ 
there exists a function $(0,\infty)\owns \delta\mapsto \eps(\delta)\in(0,\infty)$
satisfying $\lim_{\delta\to 0}\eps(\delta)=0$ with the following property.
Let $A,B\subset\reals^2$ be Borel sets with  positive Lebesgue measures
satisfying $|A|/|B|\in \Lambda$.  If
\begin{equation} |A+B|^{1/2}\le |A|^{1/2}+ |B|^{1/2}+\delta\mab^{1/2} \end{equation}
then there exist a compact convex set $\scriptk\subset\reals^2$,
scalars $\alpha,\beta\in\reals^+$, and vectors $u,v\in\reals^2$ such that
\begin{equation} A \subset \alpha \scriptk+u \text{ and } 
|(\alpha\scriptk+u)\setminus A|<\eps(\delta)\mab \end{equation}
while
\begin{equation} B \subset \beta\scriptk+v \text{ and } 
|(\beta\scriptk+v)\setminus B|<\eps(\delta)\mab. \end{equation}
\end{theorem}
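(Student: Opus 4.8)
The plan is to run the classical slicing proof of the two-dimensional Brunn-Minkowski inequality and to keep track, at every step, of how much slack is lost; the engine will be the one-dimensional near-equality theorem established in the earlier papers of this series. First I would make the standard reductions: by inner regularity it suffices to treat compact $A,B$, and since $|A|/|B|$ ranges over a fixed compact subset of $(0,\infty)$ we may rescale $\reals^2$ so that $|A|$ and $|B|$ are each comparable to $1$. It is also convenient to apply a common linear map to both sets, which changes neither the hypothesis nor the conclusion nor any of the ratios $|\cdot|/\mab$, so as to normalize the configuration, e.g.\ so that $\pi_1(A)$, $\pi_1(B)$, and the generic fibers of $A$ all have length comparable to $1$, where $\pi_1\colon\reals^2\to\reals$ is the first coordinate projection.

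Write $A_s=\{t:(s,t)\in A\}$, $B_s=\{t:(s,t)\in B\}$ for the fibers and $f(s)=|A_s|$, $g(s)=|B_s|$ for the fiber-length functions. Parametrize the two axes by mass: let $\sigma_A\colon[0,1]\to\reals$ be the increasing function with $\int_{-\infty}^{\sigma_A(\theta)}f=\theta|A|$, define $\sigma_B$ similarly, and set $\sigma(\theta)=\sigma_A(\theta)+\sigma_B(\theta)$. Since $(A+B)_{\sigma(\theta)}\supseteq A_{\sigma_A(\theta)}+B_{\sigma_B(\theta)}$, the one-dimensional Brunn-Minkowski inequality on fibers gives $|(A+B)_{\sigma(\theta)}|\ge f(\sigma_A(\theta))+g(\sigma_B(\theta))$, and the change of variables $w=\sigma(\theta)$, for which $dw=\big(|A|/f(\sigma_A(\theta))+|B|/g(\sigma_B(\theta))\big)\,d\theta$, yields
\begin{equation*}
|A+B|\ \ge\ \int_0^1\big(f(\sigma_A(\theta))+g(\sigma_B(\theta))\big)\Big(\tfrac{|A|}{f(\sigma_A(\theta))}+\tfrac{|B|}{g(\sigma_B(\theta))}\Big)\,d\theta\ \ge\ \big(|A|^{1/2}+|B|^{1/2}\big)^2,
\end{equation*}
the last step being the pointwise arithmetic-geometric mean inequality. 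Consequently the hypothesis forces near-equality in each of the three estimates used: (i) in the inclusion $(A+B)_{\sigma(\theta)}\supseteq A_{\sigma_A(\theta)}+B_{\sigma_B(\theta)}$ and in the passage from $\int_{\pi_1(A+B)}$ to $\int_0^1$ along $w=\sigma(\theta)$; (ii) in the one-dimensional Brunn-Minkowski inequality on the fiber pair $\big(A_{\sigma_A(\theta)},B_{\sigma_B(\theta)}\big)$, for $\theta$ outside a set of small measure; and (iii) in the pointwise arithmetic-geometric mean inequality, again off a small set. From (ii) and the one-dimensional near-equality theorem I would conclude that for most $\theta$ the fibers $A_{\sigma_A(\theta)}$ and $B_{\sigma_B(\theta)}$ each differ from an interval by a set of measure small relative to their own lengths; from (iii), that $g(\sigma_B(\theta))/f(\sigma_A(\theta))$ is nearly the constant $(|B|/|A|)^{1/2}$, i.e.\ the two mass parametrizations make the fiber-length profiles of $A$ and $B$ nearly proportional; and from (i), that $\pi_1(A)$ and $\pi_1(B)$ are themselves near-intervals and that $A+B$ is nearly filled by the matched slices $A_{\sigma_A(\theta)}+B_{\sigma_B(\theta)}$.

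It remains to reassemble this fiber-wise information into a pair of homothetic convex sets. Knowing that the fibers are near-intervals, $A$ nearly coincides with a region $\{(s,t):\ell_A(s)\le t\le r_A(s)\}$ over a near-interval base, and similarly $B$ with $\{(s,t):\ell_B(s)\le t\le r_B(s)\}$; one then argues that $r_A$ is nearly concave and $\ell_A$ nearly convex, either by repeating the slicing argument in a second, generic direction or by exploiting the rigidity in (i) of how the slices must tile $A+B$ via a Pr\'ekopa-Leindler-type inequality applied to the endpoint functions and to $f$, so that $A$ and $B$ are each within $\eps(\delta)\mab$ of convex sets $\scriptk_A$ and $\scriptk_B$. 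Finally, the proportionality of fiber-length profiles from (iii), together with the compatibility of the two mass parametrizations forced by (i) — explicitly, $\sigma_B'(\theta)/\sigma_A'(\theta)$ is nearly the constant $(|B|/|A|)^{1/2}$ — identifies $\scriptk_B$ as a homothet of $\scriptk_A$ with ratio $|B|^{1/2}:|A|^{1/2}$; taking $\scriptk$ to be this common shape, $\alpha=|A|^{1/2}$, $\beta=|B|^{1/2}$, and absorbing the residual shifts of the parametrizations into the translations $u,v$, one obtains the theorem.

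The main technical obstacle is the bookkeeping around the \emph{distribution} of the deficit. Step (ii) is informative only where the fibers in question are not too short, because the one-dimensional near-equality theorem compares the one-dimensional deficit to the \emph{smaller} of the two fiber lengths; one must therefore show that the set of $\theta$ for which $\min\!\big(f(\sigma_A(\theta)),g(\sigma_B(\theta))\big)$ is small carries only a small fraction of the mass, and dispose of the complementary thin part by a crude estimate — and similarly the base-gap sizes controlled in (i) are controlled only in terms of the local fiber scale. The conceptually delicate point is the reassembly in the third paragraph: a set with near-interval slices in two transverse directions need not be close to convex (an $L$-shaped region has interval slices in two directions), so one genuinely needs the near-concavity and near-convexity of the boundary functions, and extracting this is where the second layer of one-dimensional analysis does the real work. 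Once the quantitative slicing estimates are in hand, the stated qualitative form of the theorem — with an unspecified modulus $\eps(\delta)\to 0$ — can alternatively be obtained by a compactness and contradiction argument against the equality case \cite{henstock,HO}, provided one first normalizes the configuration by an affine map, as above, to prevent escape of mass to infinity.
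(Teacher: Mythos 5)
Your outline starts from a genuinely different decomposition than the paper's: you slice by mass (the Hadwiger--Ohmann parametrization $\sigma_A,\sigma_B$), whereas the paper works directly with the $x$-coordinate and with the superlevel sets $\scripta_s=\{x:|A_x|>s\}$, $\scriptb_s$, $\scripts_s$ of the fiber-length functions; the latter is chosen precisely so that the one-dimensional Brunn--Minkowski inequality enters in the additive form $|tU+(1-t)V|\ge t|U|+(1-t)|V|$ both on fibers and on superlevel sets, which keeps every intermediate inequality strictly one-dimensional. Your steps (i)--(iii) have rough analogues in the paper's Steps One--Six, and your diagnosis of the obstacle --- that near-interval slices in two directions do not force near-convexity, and that the deficit must be tracked against the smaller fiber length --- is exactly right. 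Where you and the paper diverge is the reassembly. The paper never attempts to prove near-concavity of $r_A$ or near-convexity of $\ell_A$; your two suggestions for that step (repeating the slicing in a second direction, or a ``Pr\'ekopa--Leindler-type'' argument on endpoint functions) are not developed, and as you yourself note this is where the real work would be. The paper's actual end-game is the compactness-against-the-equality-case argument that you mention as an ``alternative.''

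The genuine gap is that the compactness route needs more normalization than you supply, and the missing piece is the paper's key new ingredient. Normalizing so that projections and fiber lengths are $\asymp 1$, as you propose, does not confine $A$ and $B$ to a bounded region: a severely sheared near-parallelogram has unit-length projections and unit-length fibers, yet escapes to infinity, so nothing converges. The paper closes this via Step Seven: writing $\varphi(x),\psi(x)$ for the centers of the fiber intervals $I_x,J_x$ produced by the one-dimensional Fre\u\i man-type theorem, the tiling rigidity of $tA+(1-t)B$ shows that $t\varphi(x')+(1-t)\psi(x'')-\bigl(t\varphi+(1-t)\psi\bigr)(tx'+(1-t)x'')$ is $O(1)$ off a small set, and then an additive-structure lemma (Lemma 6.6 of \cite{christyoungest}, recorded here as Lemma~\ref{lemma:threefunctions}: if $f(x)+g(y)+h(tx+(1-t)y)$ is small on most of a box, then $f,g,h$ are each close to affine) forces $\varphi,\psi$ to be near-affine, so a shear places everything in a fixed box. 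Even after that, one still needs the near-fibered structure to hold uniformly for \emph{all} rotations of the pair $(A,B)$ --- this is the paper's Step Eight, and it is what drives the fractional-Sobolev bound and Rellich compactness in Step Nine. Your proposal contains neither the shear normalization nor the rotational uniformity, so the contradiction/compactness argument you invoke at the end cannot run as stated. The mass-parametrization slicing is a perfectly reasonable way to begin, and the 1D near-equality theorem you lean on is the same Proposition~\ref{prop:nearlyintervals}; but the affine-structure lemma and the all-directions Sobolev estimate are not cosmetic, and the argument does not close without them.
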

Conversely, if $A,B$ are related to $\scriptk$ in the way indicated,
then $|A+B|^{1/2}$ is nearly equal to $|A|^{1/2}+|B|^{1/2}$.

One question not studied here is the nature of the function $\delta\mapsto\eps(\delta)$.
To the best of our understanding,
it is natural to conjecture that $\eps=O(\delta^\gamma)$ for some positive exponent $\gamma$. 
We hope to return to this point in the future.

The foundation of our analysis is the following sharp one-dimensional version of 
Theorem~\ref{thm:BMadd}.
\begin{proposition} \label{prop:nearlyintervals}
Let $A,B\subset\reals^1$ be Borel sets. If $|A+B|<|A|+|B|+\delta$ and if $\delta<\min(|A|,|B|)$
then there exist intervals $I,J$ such that $A\subset I$, $B\subset J$, $|I|<|A|+\delta$, and $|J|<|B|+\delta$.
\end{proposition}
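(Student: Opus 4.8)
The plan is to reduce to compact sets, normalize by a translation, and then reduce the whole statement to a single sharp inequality which is proved by induction on the number of component intervals.

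First, by inner regularity one may pass to compact subsets $K\subset A$ and $L\subset B$ with $|A\setminus K|$ and $|B\setminus L|$ as small as one likes; since $K+L\subset A+B$ and $|K+L|\ge|K|+|L|$, the pair $(K,L)$ satisfies a hypothesis of the same shape with $\delta$ replaced by a number tending to $\delta_0:=|A+B|-|A|-|B|<\delta$, and still below $\min(|K|,|L|)$ once $K,L$ are close to $A,B$. Granting the statement for compact sets, one then lets $K\uparrow A$, $L\uparrow B$, takes the intervals to be the convex hulls, and passes to the limit; the hypothesis $\delta_0<\min(|A|,|B|)$ forces $A$ and $B$ to be essentially bounded with no null ``outliers'', so the limiting intervals contain $A$ and $B$ and have length at most $|A|+\delta_0<|A|+\delta$, resp.\ $|B|+\delta_0<|B|+\delta$. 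Thus assume $A,B$ compact. Translating, arrange $\sup A=\inf B=0$, so that $A\subset(-\infty,0]$, $B\subset[0,\infty)$, $0\in A\cap B$, the convex hulls are $I=[\alpha,0]$ and $J=[0,\beta]$, and $A\cup B\subset A+B$ is a union disjoint apart from the point $0$, whence $\delta_0\ge0$. Since every interval containing $A$ contains $I$, it is enough to prove $|I|\le|A|+\delta_0$ and, symmetrically, $|J|\le|B|+\delta_0$.

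The key preliminary estimate is a lower bound for $\delta_0$. List the connected components of $I\setminus A$ as open intervals $G_k$ (their endpoints lie in $A$), and those of $J\setminus B$ as $H_m$ (endpoints in $B$). With $c_k:=\inf G_k\in A$, the set $c_k+\bigl(B\cap[0,|G_k|]\bigr)$ lies in $A+B$ and inside $G_k$; these sets, together with $A$ and with $B$, are pairwise disjoint apart from null sets, so
\[ |A+B|\ \ge\ |A|+|B|+\sum_k\bigl|B\cap[0,|G_k|]\bigr|+\sum_m\bigl|A\cap[-|H_m|,0]\bigr|. \]
Hence $\delta_0\ge\bigl|B\cap[0,|G_k|]\bigr|$ for every $k$ and $\delta_0\ge\bigl|A\cap[-|H_m|,0]\bigr|$ for every $m$; in particular, since $\delta_0<|B|$ no gap of $A$ has length $\ge|J|$, and since $\delta_0<|A|$ no gap of $B$ has length $\ge|I|$. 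It is illuminating to rewrite $|A+B|=|I|+|J|-H$, where $H\ge0$ is the measure of $(I+J)\setminus(A+B)$ (the ``holes'' of $A+B$ inside its convex hull $I+J$): then $\delta_0=(|I|-|A|)+(|J|-|B|)-H$, so $|I|-|A|\le\delta_0$ is equivalent to $H\le|J|-|B|$, and the desired conclusion amounts to $H\le\min\bigl(|I|-|A|,\,|J|-|B|\bigr)$.

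Everything now rests on the sharp inequality
\[ |A+B|\ \ge\ |A|+|B|+\min\bigl(\,|I|-|A|,\ |B|\,\bigr), \]
for then the hypothesis $\delta_0<\min(|A|,|B|)\le|B|$ forces the minimum to equal $|I|-|A|$, so $|I|-|A|\le\delta_0$, and symmetrically $|J|-|B|\le\delta_0$. To prove this inequality, approximate $A$ and $B$ from outside by finite unions of closed intervals obtained by refining a grid, so that the approximants decrease to $A$ and $B$, their sumsets decrease to $A+B$, and $|A|$, $|B|$, $|I|$ converge along the approximation; this reduces matters to the case where $A$ and $B$ are finite unions of closed intervals, which one handles by induction on the total number of component intervals. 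If every gap of $A$ has length at most the length $\beta_0$ of the component of $B$ containing $0$, then $A+[0,\beta_0]$ fills every gap of $A$, so $A+B\supset A+[0,\beta_0]=[\alpha,\beta_0]$ and $|A+B|\ge|[\alpha,\beta_0]\cup B|=|I|+|B|$, which is stronger than needed. In the remaining case some gap of $A$ exceeds $\beta_0$, so $B$ has a gap; here one splits $B$ at its first gap (or $A$ at its widest gap), applies the induction to the pieces, and accounts for the overlap of the two translated copies of $A+B$, whose length is bounded by the estimates $\delta_0\ge\bigl|A\cap[-|H_m|,0]\bigr|$ recorded above. I expect this last step --- the overlap bookkeeping --- to be the main obstacle. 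The delicate configurations are those in which $A$ is a union of long intervals separated by one very wide gap while $B$ has large total gap measure; handling them seems to require strengthening the inductive statement (for instance to a version symmetric in $A$ and $B$, or one also involving $|I|$ and $|J|$) so that the recursion closes, and it is precisely at this point that the hypothesis $\delta_0<\min(|A|,|B|)$, rather than merely $\delta_0<|A|+|B|$, is indispensable.
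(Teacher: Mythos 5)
The paper does not contain a proof of this proposition: it states that the result is a continuum analogue of Fre{\u\i}man's theorem on sumsets of finite sets of integers and points to \cite{freiman} and \cite{christrieszsobolev} for the argument. Your plan --- pass to compact sets, normalize so that $\sup A=\inf B=0$, record the gap-filling observation that $c_k+\bigl(B\cap[0,|G_k|]\bigr)$ lies in $A+B$ inside each gap $G_k$ of $A$ disjointly from $A\cup B$, and then reduce everything to the sharp inequality
\[
|A+B|\ \ge\ |A|+|B|+\min\bigl(|I|-|A|,\ |B|\bigr)
\]
to be proven by grid discretization and induction on the number of component intervals --- is exactly the kind of argument the paper has in mind, and the reductions you carry out (the normalization, the reformulation of the conclusion as $H\le\min(|I|-|A|,\,|J|-|B|)$, and the logical step from the sharp inequality plus $\delta_0<\min(|A|,|B|)$ to the conclusion) are all correct.

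But the argument is an outline, not a proof: the displayed sharp inequality is the entire content of the proposition and you do not establish it. The easy case (every gap of $A$ no longer than the component $[0,\beta_0]$ of $B$ at the origin, in which $A+[0,\beta_0]\supset I$ gives $|A+B|\ge|I|+|B|$) is fine, but the hard inductive step --- splitting $B$ at its first gap, recursing, and controlling the overlap of the two translated sumsets --- is left open, as you yourself acknowledge (``I expect this last step \dots to be the main obstacle'' and ``handling them seems to require strengthening the inductive statement \dots so that the recursion closes''). This is a genuine missing idea, not a technicality. The gap-filling estimate $\delta_0\ge\bigl|B\cap[0,|G_k|]\bigr|$ that you do prove is correct and useful (it already rules out any gap of $A$ as long as $\sup B$), but it is strictly weaker than the sharp inequality: for instance with $B=\{0\}\cup[1,2]$ and $A$ whose gaps all have length $<1$, every term $\bigl|B\cap[0,|G_k|]\bigr|$ vanishes and the estimate gives only $|A+B|\ge|A|+|B|$, while $|I|-|A|$ can be bounded away from $0$. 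So a genuinely new step --- a closed strengthened induction, or a reduction on a common arithmetic grid to the discrete Fre{\u\i}man $3k-4$ theorem --- is still required. A secondary, fixable gap: the Borel-to-compact reduction as written does not yield $A\subset I$ pointwise, since a null outlier of $A$ escapes the increasing compacts $K$; the standard repair is, for any $a_1,a_2\in A$, to adjoin $\{a_1,a_2\}$ to the compact approximants and apply the compact case to conclude $|a_1-a_2|\le|A|+\delta_0<|A|+\delta$, so that the closed convex hull of $A$ works, but the assertion that the hypothesis ``forces $A$ and $B$ to be essentially bounded with no null outliers'' is itself part of the conclusion and cannot simply be declared.
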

The conclusion can of course be equivalently restated as upper bounds for the diameters of $A,B$.
Proposition~\ref{prop:nearlyintervals} is a continuum analogue of a theorem of Fre{\u\i}man \cite{freiman} 
concerning sums of finite subsets of $\integers$, and can be deduced as a corollary
of this discrete case. A more direct proof, adapted from a pre-existing proof of the discrete
version, is included in \cite{christrieszsobolev}. 

Our proof is organized as an induction on the dimension, 
in which $\reals^d$ is regarded as $\reals^k\times\reals^{d-k}$
and the induction hypothesis is applied in both factors $\reals^k,\reals^{d-k}$;
this is done for arbitrary rotations of $\reals^d$. Most of the work is devoted to
obtaining sufficient control over the sets  $A,B$ to allow application of a compactness
argument. In the limit $\delta\to 0$, one obtains a pair of sets which achieves
exact equality in the Brunn-Minkowski inequality. An application of the known characterization
of such equality then leads easily to our conclusion.
However, one step breaks down unless $k=1$, while another requires $d-k=1$, so this analysis is only completely
successful in dimension $d=2$. The higher-dimensional case is treated in the sequel \cite{christbmhigh},
using an additional ingredient, but the last few steps of the argument presented here
are also an essential part of the higher-dimensional analysis.

There has been significant recent work \cite{FMP2008}, \cite{FMP2010}
on stability in the isoperimetric inequality, which is a limiting form of the Brunn-Minkowski inequality. 

\begin{notation}
Throughout the analysis, we identify $\reals^d$ with $\reals^1\times\reals^{d-1}$, with coordinates $(x,y)$.
$|S|$ denotes the Lebesgue measure of a subset $S$ of $\reals^{d-1},\reals^d$, or $\reals^1$, according to context. 
If $T\subset\reals^d$, then for $x\in\reals^1$, $T_x$ denotes the set
\[T_x=\set{y\in\reals^{d-1}: (x,y)\in T}.\]
The projection $\pi:\reals^d\to\reals^1$ is the mapping $\reals^1\times\reals^{d-1}\owns (x,y)\mapsto x$.
For any set $A\subset\reals^d$ and any $x\in\reals^1$,
\[ A_x=\set{y\in\reals^{d-1}: (x,y)\in A}.  \]
The notation $O(\delta^\gamma)$, where $\gamma$ is some positive exponent,
indicates a quantity which is bounded above by $C\delta^\gamma$,
uniformly for all pairs $(A,B)$ of sets satisfying indicated hypotheses,
while $O(1)$ indicates a quantity which is bounded uniformly in $A,B,t,\delta$ 
provided that $\delta$ is sufficiently small and $t\in\Lambda$.
The notation $o_\delta(1)$ indicates a quantity which tends to zero as $\delta$ tends to zero,
again uniformly for all pairs $(A,B)$ of sets satisfying indicated hypotheses.
by $S\bigtriangleup T$ denotes the symmetric difference $(S\setminus T) \cup (T\setminus S)$ of two sets.
\end{notation}

\section{Two reductions} \label{section:tworeductions}
An equivalent formulation of the Brunn-Minkowski inequality is that
for all nonempty Borel sets $A,B$ and all scalars $t\in(0,1)$,
\begin{equation} \label{eq:BMmult} |tA+(1-t)B|\ge |A|^t|B|^{1-t}.  \end{equation}
Theorem~\ref{thm:BMadd} has a corresponding equivalent formulation, which will be more
convenient for our purpose.

\begin{theorem} \label{thm:BMmult}
For any compact subset $\Lambda\subset(0,1)$ 
there exists a function $(0,\infty)\owns \delta\mapsto \eps(\delta)\in(0,\infty)$
satisfying $\lim_{\delta\to 0}\eps(\delta)=0$ with the following property.
Let $A,B\subset\reals^2$ be Borel sets with finite, positive Lebesgue measures, and let $t\in\Lambda$. If
\begin{equation} |tA+(1-t)B|\le |A|^t |B|^{1-t}+\delta\mab \end{equation}
then there exist a compact convex set $\scriptk\subset\reals^2$
and vectors $a,b\in\reals^2$ such that
\begin{equation} A \subset \scriptk+a \text{ and } 
|(\scriptk+a)\setminus A|<\eps(\delta)\mab \end{equation}
while
\begin{equation} B \subset \scriptk+b \text{ and } 
|(\scriptk+b)\setminus B|<\eps(\delta)\mab. \end{equation}
\end{theorem}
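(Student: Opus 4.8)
The plan is to show directly that Theorem~\ref{thm:BMmult} and Theorem~\ref{thm:BMadd} are equivalent, so that it suffices to prove the former; the body of the paper then proves Theorem~\ref{thm:BMmult}. First I would record the elementary scaling identities: for scalars $s,t>0$ one has $|sA+tB| = |(s/t)A+B|\cdot t^d$ in $\reals^d$ (here $d=2$), and $|rA| = r^d|A|$. These let one pass freely between the additive formulation \eqref{eq:BMadd} and the multiplicative one \eqref{eq:BMmult}: given $A,B$ and $t\in(0,1)$, apply the additive statement to the rescaled pair $tA$ and $(1-t)B$; conversely, given $A,B$ with $|A|/|B|\in\Lambda$, choose $t = |A|^{1/2}/(|A|^{1/2}+|B|^{1/2})$, which ranges over a compact subset of $(0,1)$ as $|A|/|B|$ ranges over $\Lambda$, and apply the multiplicative statement to the pair $t^{-1}A$, $(1-t)^{-1}B$. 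The quantity $\mab$ is comparable, up to constants depending only on $\Lambda$, to $\max(|tA|,|(1-t)B|)$ and to $|A|^t|B|^{1-t}$, so the error terms $\delta\mab$ match up after adjusting $\delta$ and $\eps$ by $\Lambda$-dependent constant factors; similarly the $O(\eps(\delta))$ measure bounds on the symmetric differences are preserved, since a homothety $\scriptk\mapsto\alpha\scriptk+u$ scales $\reals^2$-measures by $\alpha^2$.

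The core of the argument is the induction on dimension sketched in the introduction, carried out for all rotations of $\reals^d$ simultaneously, which here means: write $\reals^2 = \reals^1\times\reals^1$ with coordinates $(x,y)$, apply Proposition~\ref{prop:nearlyintervals} in the $x$-variable to the projections $\pi(A),\pi(B)$, and apply it fiberwise (and, after rotating, in the $y$-variable) to the slices $A_x,B_x$. The normalization to arrange (after an affine change of variables) that $|A|=|B|=1$ and that both $A$ and $B$ are nearly balanced in the relevant directions is a routine preliminary. One then extracts quantitative control: most of the projections $\pi(A),\pi(B)$ are contained in short intervals, most of the fibers $A_x,B_x$ are contained in short intervals, and the fiber measures $x\mapsto|A_x|$, $x\mapsto|B_x|$ vary slowly, because a fiber of anomalous size or an anomalously spread-out fiber would, via one-dimensional Brunn-Minkowski applied in that fiber together with Proposition~\ref{prop:nearlyintervals} applied to the base, force $|tA+(1-t)B|$ to exceed $|A|^t|B|^{1-t}+\delta\mab$.

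Once enough uniform control is in hand, the endgame is a compactness argument: along a sequence $\delta_n\to 0$, pass to normalized pairs $(A_n,B_n)$ that converge (in $L^1$ of indicator functions, say, after restricting to a fixed large box guaranteed by the diameter bounds) to a limiting pair $(A_\infty,B_\infty)$ achieving exact equality $|tA_\infty+(1-t)B_\infty| = |A_\infty|^t|B_\infty|^{1-t}$. The characterization of equality in \eqref{eq:BMadd} due to Henstock--Macbeath \cite{henstock} and \cite{HO} then gives a compact convex $\scriptk$ with $A_\infty$, $B_\infty$ each equal, up to null sets, to a translate of $\scriptk$, which yields the desired conclusion for $(A_n,B_n)$ for $n$ large, hence (contrapositively) the existence of the function $\eps(\delta)$.

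I expect the main obstacle to lie not in this final paragraph but in the middle step: converting the single scalar hypothesis $|tA+(1-t)B|\le|A|^t|B|^{1-t}+\delta\mab$ into \emph{robust, dimension-reduced} information about fibers and projections. The difficulty is that a small global defect can in principle be concentrated, so one must quantify an inequality of the shape
\[
|tA+(1-t)B| \ \ge\ \int \big(t|A_x|^{1/(d-1)}+(1-t)|B_{\sigma(x)}|^{1/(d-1)}\big)^{d-1}\,dx \ -\ (\text{length defect}),
\]
where $\sigma$ is the optimal transport-type rearrangement of the base coming from Proposition~\ref{prop:nearlyintervals}, and then show that near-equality here forces near-proportionality of the slice functions together with near-affine behavior of $\sigma$. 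This is exactly the place the introduction flags as failing unless $k=1$ (so that the one-dimensional Fre{\u\i}man-type input applies in the base) and unless $d-k=1$ (so that the slices are one-dimensional and Proposition~\ref{prop:nearlyintervals} applies to them as well), confining the clean argument to $d=2$.
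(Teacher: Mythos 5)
Your high-level outline — normalize by an affine map, slice and apply the one-dimensional Fre{\u\i}man-type stability result (Proposition~\ref{prop:nearlyintervals}) to the base and to fibers, pass to a $\delta\to 0$ limit, and invoke the Henstock--Macbeath equality characterization — does match the paper's strategy, and you correctly identify the middle ``robust dimension reduction'' step as the crux. But there are three genuine gaps, two of which are not acknowledged.

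First and most seriously, the compactness you invoke at the end is not free. You write that after restricting to a fixed box one can pass to a subsequence along which $\one_{A_n}, \one_{B_n}$ converge in $L^1$; but indicator functions of subsets of a bounded box, with measures bounded away from $0$ and $\infty$, do not in general have $L^1$-convergent subsequences (they can oscillate at all scales). The paper spends an entire section (Step Nine, Lemma~\ref{lemma:FT}) establishing precompactness by showing that, after the affine normalization and for \emph{every} rotation $\scripto\in O(2)$, $\one_A$ lies in a bounded subset of the Sobolev space $H^s$ for $s<\tfrac12$, because most fibers of $A$ in the rotated direction are near-intervals and hence the one-dimensional difference quotient estimate $\norm{\Delta_u \one_{A}}_2 = O(u^{1/2})$ holds in that direction. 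Only then does Rellich give $L^2$-precompactness. Your proposal treats this as automatic, which it is not; it is exactly what the fiber-structure work is \emph{for}, and your argument cannot close without it.

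Second, the description of the middle step — an inequality of the form $|tA+(1-t)B| \ge \int (t|A_x|^{1/(d-1)} + (1-t)|B_{\sigma(x)}|^{1/(d-1)})^{d-1}\,dx$ minus a defect, with $\sigma$ an ``optimal transport-type rearrangement'' — does not match the paper's mechanism and, as stated, is not obviously correct. The paper instead works with the superlevel sets $\scripta_s = \{x : |A_x|>s\}$, $\scriptb_s$, uses the inclusion $\scripts_s \supset t\scripta_s + (1-t)\scriptb_s$ plus one-dimensional Brunn--Minkowski to control $\int \phi(s)\,ds$ where $\phi(s) = |\scripts_s| - t|\scripta_s| - (1-t)|\scriptb_s|$, picks a good level $s_0$, applies Proposition~\ref{prop:nearlyintervals} to the superlevel sets, and then — crucially — uses a localization lemma (Lemma~\ref{lemma:localization}) to show truncation to $[0,a]\times\reals$ costs only $O(\delta)+O(|B\setminus B^\sharp|^2)$ rather than $O(|B\setminus B^\sharp|)$; the square is what makes the subsequent $\delta$-exponent bookkeeping close. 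You would also need the three-function affine-structure lemma (Lemma~\ref{lemma:threefunctions}, imported from \cite{christyoungest}) to upgrade ``most fibers are near-intervals'' to ``the interval centers are nearly affine in $x$,'' which is what licenses the shear normalization; this ingredient does not appear in your sketch.

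Third, even after the compactness argument, the limit gives only that $|A_\nu \bigtriangleup \scriptc| \to 0$ for a convex $\scriptc$. The theorem asserts \emph{containment} $A\subset\scriptk+a$ with small excess measure, which is strictly stronger. The paper closes this gap with Lemma~\ref{lemma:smalldistancetoK}: a point of $A_\nu$ far from $\scriptc_\nu$ would, paired with the bulk of $B_\nu$, generate a slab of the sumset protruding beyond $\scriptc_\nu$ with definite measure, contradicting near-equality. Your proposal does not address this step, and without it you only get the weaker symmetric-difference conclusion. (Finally, a small remark on your opening paragraph: you state you will show the equivalence of the two theorems ``so that it suffices to prove the former,'' but the former is the theorem you were asked to prove; the paper's reduction runs the other way, deducing Theorem~\ref{thm:BMadd} from Theorem~\ref{thm:BMmult}, and then proves the latter directly — the reduction is not a step toward proving Theorem~\ref{thm:BMmult}.)
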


Theorem~\ref{thm:BMadd} follows directly from Theorem~\ref{thm:BMmult}. Given arbitrary Borel sets $A,B$
with finite, positive Lebesgue measures, choose $t\in\reals^+$ so that $\scripta=t^{-1}A$ and $\scriptb=(1-t)^{-1}B$
satisfy $|\scripta|=|\scriptb|$.  If $(A,B)$ gives near equality in \eqref{eq:BMadd} then
$|t\scripta+(1-t)\scriptb|=|A+B|$ while
\[ (|A|^{1/d}+|B|^{1/d})^d =(t|\scripta|^{1/d}+(1-t)|\scriptb|^{1/d})^d 
=|\scripta|^t|\scriptb|^{1-t}.  \]
So long as the ratio $|A|/|B|$ is confined to a compact subset of $(0,\infty)$,
$\max(|A|,|B|)$ is uniformly comparable to $\max(|\scripta|,|\scriptb|)$. Therefore
near equality in $\eqref{eq:BMadd}$, for the pair $(A,B)$, in the form assumed in the statement of
Theorem~\ref{thm:BMadd}, implies the hypotheses of Theorem~\ref{thm:BMmult} for the pair $(\scripta,\scriptb)$.
Applying the Theorem~\ref{thm:BMmult} to $(\scripta,\scriptb)$,
yields the conclusion of Theorem~\ref{thm:BMadd} for $(A,B)$. Thus it suffices to prove Theorem~\ref{thm:BMmult}.

Implicit in the hypotheses of Theorem~\ref{thm:BMmult} is a condition on $|A|/|B|$. 
\begin{lemma} \label{lemma:goodratio}
For any compact set $\Lambda\subset(0,1)$ there exists 
a constant $C<\infty$ which depends only on the dimension $d$, with the following property
for all sufficiently small $\delta>0$.
Let $A,B$ be Borel subsets of $\reals^d$ with positive, finite Lebesgue measures.
If $t\in\Lambda$ and $|tA+(1-t)B|<|A|^t|B|^{1-t}+\delta\max(|A|,|B|)$ then 
\[ \Big|\frac{|A|}{|B|} -1\Big|\le C\delta^{1/2}. \]
\end{lemma}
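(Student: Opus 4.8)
The plan is to derive the bound on $|A|/|B|$ directly from the Brunn–Minkowski inequality \eqref{eq:BMmult} by exploiting the arithmetic–geometric mean gap. Without loss of generality assume $|A|\ge|B|$, so that $\max(|A|,|B|)=|A|$, and write $r=|B|/|A|\in(0,1]$; the goal is to show $1-r=O(\delta^{1/2})$. The hypothesis gives $|tA+(1-t)B|<|A|^t|B|^{1-t}+\delta|A|$, while Brunn–Minkowski gives $|tA+(1-t)B|\ge |A|^t|B|^{1-t}$ already; that alone says nothing. The point is instead to compare $|A|^t|B|^{1-t}$ with the \emph{linear} quantity $t|A|+(1-t)|B|$, which is also a lower bound for $|tA+(1-t)B|$ — indeed $|tA+(1-t)B|\ge |tA|=t^d|A|$ is too weak, so instead I use the sharper fact that $|tA+(1-t)B|\ge \max(t|A|^{1/d}+(1-t)|B|^{1/d})^d$, hmm. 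Let me restate: the clean inequality to use is that $tA+(1-t)B \supset tA + (1-t)b_0$ for any fixed $b_0\in B$, giving $|tA+(1-t)B|\ge t^d|A|$, and symmetrically $\ge (1-t)^d|B|$; but more usefully, \eqref{eq:BMmult} applied with $d$ replaced by the one-dimensional bound along a generic line is not what we want either.

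Here is the argument I would actually run. Normalize $|A|=1$, so $|B|=r\le 1$ and $\max=1$. By Brunn–Minkowski \eqref{eq:BMadd} in the additive form, $|tA+(1-t)B|^{1/d}\ge |tA|^{1/d}+|(1-t)B|^{1/d}=t+(1-t)r^{1/d}$, hence
\[ |tA+(1-t)B|\ge \big(t+(1-t)r^{1/d}\big)^d. \]
Combining with the hypothesis $|tA+(1-t)B|< r^{1-t}+\delta$ yields
\[ \big(t+(1-t)r^{1/d}\big)^d - r^{1-t} < \delta. \]
Now set $f(r)=\big(t+(1-t)r^{1/d}\big)^d - r^{1-t}$ for $r\in(0,1]$. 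One checks $f(1)=1-1=0$, and $f$ is strictly positive on $(0,1)$ because $\big(t+(1-t)r^{1/d}\big)^d \ge \big(1^t\cdot (r^{1/d})^{1-t}\cdot\text{something}\big)$ — more precisely by weighted AM–GM on $d$ copies, $t\cdot 1 + (1-t)r^{1/d}\ge 1^{t}(r^{1/d})^{1-t}=r^{(1-t)/d}$, so $\big(t+(1-t)r^{1/d}\big)^d\ge r^{1-t}$, with equality iff $r=1$. So $f\ge 0$ with a unique zero at $r=1$. The remaining task is quantitative: show $f(r)\ge c_\Lambda (1-r)^2$ for $r$ near $1$, uniformly for $t\in\Lambda$. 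This follows by Taylor expansion: $f(1)=0$, $f'(1)=0$ (a direct computation, since $r=1$ is the minimum of a smooth function on a neighborhood), and $f''(1)=t(1-t)\cdot(\text{positive constant depending on }d)>0$, bounded below uniformly since $\Lambda$ is a compact subset of $(0,1)$. Hence $f(r)\ge c(1-r)^2$ for $|1-r|$ small, and for $r$ bounded away from $1$ we have $f(r)\ge c'>0$; since $\delta$ is small the latter regime is excluded. Therefore $c(1-r)^2<\delta$, giving $|1-r|\le C\delta^{1/2}$, which is the claim with $\max(|A|,|B|)=|A|=1$ reinserted by homogeneity.

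The one genuinely nonroutine point is verifying $f'(1)=0$ and the uniform positive lower bound on $f''(1)$ over $t\in\Lambda$ — i.e. that the geometric mean touches the lower envelope tangentially at $r=1$ and the gap is genuinely second order with a constant controlled away from $0$ only because $t$ stays in a compact subinterval (if $t\to 0$ or $t\to 1$ the constant degenerates, which is exactly why $\Lambda\Subset(0,1)$ is hypothesized). Everything else is the elementary observation that Brunn–Minkowski supplies the lower bound $\big(t|A|^{1/d}+(1-t)|B|^{1/d}\big)^d$ for $|tA+(1-t)B|$, which strictly exceeds the geometric mean $|A|^t|B|^{1-t}$ unless $|A|=|B|$, so the hypothesized near-equality with the geometric mean forces near-equality of the measures. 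I would present the computation of $f''(1)$ explicitly to exhibit the dependence on $d$ and $t$, then conclude.
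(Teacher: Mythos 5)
Your argument is correct and takes essentially the same route as the paper: both combine the Brunn--Minkowski lower bound $|tA+(1-t)B|\ge \bigl(t|A|^{1/d}+(1-t)|B|^{1/d}\bigr)^d$ with the second-order gap in the weighted AM--GM inequality near the diagonal, with uniformity over $t\in\Lambda$ coming from $\Lambda$ being a compact subset of $(0,1)$. The only cosmetic difference is that the paper packages the AM--GM gap abstractly as a function $\Theta_t(a/b)=\bigl(ta+(1-t)b\bigr)/\bigl(a^t b^{1-t}\bigr)$ with the needed qualitative properties, whereas you define $f(r)=\bigl(t+(1-t)r^{1/d}\bigr)^d-r^{1-t}$ explicitly and verify $f(1)=f'(1)=0$, $f''(1)=t(1-t)/d>0$ by direct computation; these are the same estimate.
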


\begin{proof}
By \eqref{eq:BMadd}  and the arithmetic-geometric mean inequality,
\begin{equation*} |tA+(1-t)B| \ge \big(|tA|^{1/d}+|(1-t)B|^{1/d}\big)^d = \big(t|A|^{1/d}+(1-t)|B|^{1/d}\big)^d 
\ge |A|^t|B|^{1-t}.\end{equation*} 
The arithmetic-geometric inequality holds in the sharper form
\[ta+(1-t)b\ge a^tb^{1-t} \Theta_t(a/b)\]
where $\Theta$ is a continuous function on $(0,\infty)$ which satisfies
$\Theta_t(r)\ge 1$ for all $r\in(0,\infty)$, $\Theta_t(r)=1$ only for $r=1$,
$\Theta_t(r)\to \infty$ as $r\to\infty$ and as $r\to 0$,
\[\Theta_t(r)\ge c(r-1)^2\] for all $r$ sufficiently close to $1$ for a certain positive constant $c$.
All of these bounds hold uniformly for all $t$ in any compact subset of $(0,\infty)$.
Thus \[|tA+(1-t)B|\ge \Theta_t(|A|/|B|) |A|^t|B|^{1-t},\]
so that if $|tA+(1-t)B|<|A|^t|B|^{1-t}+\delta\max(|A|,|B|)$ then $\Theta_t(|A|/|B|)\le 1+O(\delta)$. 
\end{proof}

It suffices to prove Theorem~\ref{thm:BMmult} under the assumption that $|A|=1$,
for the formulation is scale-invariant.
In the main body of the proof of Theorem~\ref{thm:BMmult}, we will assume
that $0<\delta\le 1$ is sufficiently small,
and that $A,B\subset\reals^2$ are nonempty compact sets which satisfy the hypotheses
\begin{equation} \label{hypos}
\left\{
\begin{aligned}
& \big|\,|A|-1\,\big|<\delta 
\ \text{ and } \ 
 \big|\,|B|-1\,\big|<\delta
\\ & |tA+(1-t)B| <1+\delta
\\ & t\in\Lambda
\end{aligned}
\right.
\end{equation}
where $\Lambda$ is a fixed compact subset of $(0,1)$. All conclusions hold uniformly for
all such $A,B,t$, for all $\Lambda$, but do depend quantitatively on $\Lambda$.
By virtue of Lemma~\ref{lemma:goodratio}, this will establish the theorem for arbitrary compact sets.
A supplementary argument at the end of the proof will extend the conclusion to all Borel sets.

At the outset we will consider general dimensions $d$, and will specialize to $d=2$ when 
the argument requires it.
Throughout the discussion, all bounds are implicitly asserted to be uniform in $t$ provided that 
$\Lambda$ remains fixed and $t\in\Lambda$. 

\section{Localization}

The purpose of this section is to establish a localization principle, to the effect that if a pair $(A,B)$
achieves near equality in \eqref{eq:BMmult}, then so do certain pairs of subsets.
Denote by $\sup S$ and $\inf S$ the supremum and infimum of any bounded set $S\subset\reals^1$.

\begin{lemma} Let $A,B\subset\reals^1$ be compact sets with finite Lebesgue measures.
Let $I$ be an interval which contains $\sup B$.
If $|A+B|\le |A|+|B|+\delta$, then
\begin{equation} |A+(B\setminus I)|\le |A|+|B\setminus I|+\delta.\end{equation} 
\end{lemma}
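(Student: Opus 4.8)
We want to show that removing the part of $B$ lying in an interval $I$ containing $\sup B$ does not destroy near-equality. The plan is to exploit the one-dimensional structure directly. Write $B = B' \cup B''$ where $B' = B \setminus I$ and $B'' = B \cap I$, and note that since $I$ contains $\sup B$, the set $B''$ lies to the right of $B'$ in the sense that $\sup B' \le \inf B''$ is \emph{not} quite guaranteed, but $\sup B'' = \sup B$ and the part of $B$ above $\inf I$ is exactly $B''$. The key point is the following ordering: for any $x \in \reals^1$, the translates $x + B'$ and $x + B''$ are ``stacked'' — more precisely, $\sup(A + B') \le \sup(A+B)$ and, crucially, the top portion of $A + B$ (above the level $\sup A + \inf I$) can only come from $A + B''$, so that $A + B'$ and a translate of $A + B''$ fit inside $A+B$ with controlled overlap.

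First I would reduce to the case where $I = [\inf I, \infty)$; enlarging $I$ upward only shrinks $B \setminus I$ further and, since $\sup B \in I$ already, we may as well take $I$ to be the half-line $[\rho,\infty)$ with $\rho = \inf I$. Then $B' = B \cap (-\infty,\rho)$ and $B'' = B \cap [\rho,\infty)$. Set $m = \sup A$. The decisive geometric observation is that
\[
(A + B') \cap (-\infty, m+\rho) \quad\text{and}\quad (A+B) \cap [m+\rho,\infty)
\]
are disjoint subsets of $A+B$, because $A + B' \subset (-\infty, m+\rho)$ (every element of $B'$ is $<\rho$ and every element of $A$ is $\le m$). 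Therefore
\[
|A+B| \ge |(A+B')\cap(-\infty,m+\rho)| + |(A+B)\cap[m+\rho,\infty)|.
\]
For the first term, $(A+B') \cap (-\infty,m+\rho)$ differs from $A+B'$ only by a null set (in fact equals it), so it contributes $|A+B'|$. For the second term, I would show $|(A+B)\cap[m+\rho,\infty)| \ge |A + B''|$: indeed $A+B'' \subset [\inf A + \rho, \infty)$, but that is the wrong half-line, so instead I translate — the set $(A+B'')$ shifted down by $m - \sup A = 0$ already lies in $[\inf A + \rho,\infty)$, and comparing with $A+B \supset A+B''$ restricted to $[m+\rho,\infty)$ requires that the portion of $A+B''$ below level $m+\rho$ be compensated. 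The cleaner route: apply Brunn--Minkowski (the trivial 1-D case $|X+Y|\ge |X|+|Y|$ when $X+Y$ is an interval, or just $|X+Y| \ge \max(|X|,|Y|)$ always) is too weak; instead observe directly that $B = B' \sqcup B''$ with $B'$ entirely below $B''$ after possibly deleting a measure-zero overlap at $\rho$, hence $|B| = |B'| + |B''|$.

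Combining, $|A+B| \ge |A+B'| + |(A+B)\cap[m+\rho,\infty)|$, and it remains to bound the last term below by $|A+B''| \ge |A| + |B''|$ — here we may use one-dimensional Brunn--Minkowski, \eqref{eq:BMadd} with $d=1$, applied to $A$ and $B''$. Wait: that gives $|A+B''|\ge |A|+|B''|$, but I need the \emph{restricted} sumset $(A+B)\cap[m+\rho,\infty)$ to dominate this. Since $A + B'' \subseteq A+B$ and every point of $A+B''$ has the form $a + b''$ with $b'' \ge \rho$, we get $a + b'' \ge \inf A + \rho$, not $\ge m+\rho$; so the inclusion $(A+B'') \subseteq [m+\rho,\infty)$ fails unless $A$ is a point. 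The fix, and the step I expect to be the main obstacle, is to instead not restrict $A+B$ to a half-line but to argue: $|A+B| = |A+B'| + |(A+B)\setminus(A+B')|$ and then show $(A+B)\setminus(A+B') \supseteq$ (a translate of) $A + B''$ up to the needed measure — equivalently, that $|(A+B')\cap(A+B'')|$ is small, or simply bound $|A+B| \ge |A+B'| + |A+B''| - |(A+B')\cap(A+B'')|$ and separately note $A+B'' = A + (B\cap I)$ together with the hypothesis $|A+B|\le|A|+|B|+\delta = |A|+|B'|+|B''|+\delta$ forces, after subtracting $|A+B'|\ge|A|+|B'|$ (1-D Brunn--Minkowski again), the bound $|A+B'| \le |A|+|B'| + \delta$ provided $|A+B| - |A+B'| \ge |B''|$. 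So the real content is the inequality $|A+B| - |A\setminus\!(\text{something})| \ge |A+B'| + |B''|$, which I would establish by the half-line/stacking argument above done carefully: push $B''$ down so its infimum meets $\sup B'$, use that $A+B' $ and the pushed-down $A+B''$ overlap only in a set of the single ``interface'' fiber (measure zero), and that pushing down only moves points within $A+B$ since we have headroom up to $\sup A + \sup B$. I'll carry this out by splitting at the level $\sup B'$ rather than at $\rho$, which removes the $\sup A$ discrepancy entirely.
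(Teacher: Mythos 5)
Your decomposition $B = B' \sqcup B''$ and the idea of exhibiting two nearly-disjoint pieces of $A+B$ to subtract are on the right track, and you correctly identified the obstacle: $A + B'' \subset [\inf A + \rho, \infty)$, not $[\sup A + \rho, \infty)$, so it need not be disjoint from $A + B'$. But none of the fixes you sketch actually close this gap, and the final one is not sound. Translating $B''$ downward so its infimum meets $\sup B'$ produces a set $A + B''_-$ that is no longer contained in $A + B$; and the overlap of $A+B'$ with that translate is a slab of width roughly $\diameter A$, not measure zero. Splitting at $\sup B'$ instead of at $\rho$ does not remove the $\sup A$ discrepancy either, since $A + B''$ still starts at $\inf A + \inf B'' \le \inf A + \rho$.

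The missing observation — and the whole point of this lemma, since it must cost you exactly $|B''|$ and not a Brunn--Minkowski estimate of $|A| + |B''|$ — is that you should not try to fit all of $A + B''$ above the cut. Use only the single point $\sup A \in A$ (attained because $A$ is compact): the set $\{\sup A\} + B''$ lies in $[\sup A + \rho, \infty)$, is contained in $A + B$, has measure exactly $|B''|$, and is disjoint from $A + B' \subset (-\infty, \sup A + \rho]$ except possibly at the endpoint. Then $|A+B| \ge |A + B'| + |B''|$, and subtracting $|B''|$ from the hypothesis $|A+B| \le |A| + |B'| + |B''| + \delta$ gives the claim immediately. No appeal to one-dimensional Brunn--Minkowski is needed or wanted here: the inequality $|A + B''| \ge |A| + |B''|$ overshoots by $|A|$ and would wreck the bookkeeping.
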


\begin{proof}
By independently translating $A,B$ we may assume without loss of generality
that $\sup A=0$ and that $0$ is the left endpoint of $I$.
Then $A+(B\setminus I)\subset (-\infty,0]$ while
$\set{0}+(B\cap I)\subset[0,\infty)$.
Since $0\in A$, 
\[A+B\supset [A+(B\setminus I)]\cup [\set{0}+(B\cap I)];\]
moreover, this is a disjoint union except possibly for the point $0$.
Therefore
\begin{align*} |A+B|&\ge |A+(B\setminus I)| + |\set{0}+(B\cap I)|
\\
&= |A+(B\setminus I)| + |B\cap I|.  \end{align*}
Therefore
\begin{align*} |A+(B\setminus I)| &\le |A+B|- |B\cap I|
\\
&< |A|+|B|+\delta-|B\cap I| 
\\ &= |A|+|B\setminus I|+\delta.  \end{align*}
\end{proof}

\begin{lemma} \label{lemma:1Dlocalize}
Let $A,B\subset\reals^1$ be compact sets.  Let $I,J$ be intervals. 
If $|A+B|< |A|+|B|+\delta$, then \[|(A\cap I)+(B\cap J)| < |A\cap I|+|B\cap J|+\delta.\]  
\end{lemma}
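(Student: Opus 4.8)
The plan is to derive Lemma~\ref{lemma:1Dlocalize} from the preceding lemma by performing four successive one-sided truncations, using the reflection $x\mapsto -x$ and the commutativity $A+B=B+A$ to reach all four directions from the single truncation that the preceding lemma supplies.

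First I would reduce to the case in which $I$ and $J$ are closed intervals. If $I$ is not closed, replace it by its closure $\bar I$: since $A$ is compact, $A\cap\bar I$ is compact, it contains $A\cap I$, and $(A\cap\bar I)\setminus(A\cap I)\subset\bar I\setminus I$ consists of at most two points, so $|A\cap\bar I|=|A\cap I|$. Replacing $J$ by $\bar J$ as well, monotonicity of Lebesgue measure gives $|(A\cap I)+(B\cap J)|\le|(A\cap\bar I)+(B\cap\bar J)|$, so it is enough to prove the asserted strict inequality when $I$ and $J$ are closed. If $A\cap I$ or $B\cap J$ is empty the inequality holds trivially because $\delta>0$; otherwise $A$ and $B$ are nonempty, and every set produced below is an intersection of a compact set with a closed interval and contains $A\cap I$ or $B\cap J$ (or equals $A$), hence is compact and nonempty. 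One more bookkeeping point: whenever I have a strict inequality $|X+Y|<|X|+|Y|+\delta$ I first choose $\delta'\in(0,\delta)$ with $|X+Y|\le|X|+|Y|+\delta'$ and apply the preceding lemma with $\delta'$; its conclusion then holds with $\delta'$, hence \emph{a fortiori} with $\delta$, so strictness is preserved through all four steps.

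Now write $I=[a,b]$ and $J=[c,d]$. I would truncate $B$ first. Applying the preceding lemma to $(A,B)$ with the interval $(d,\infty)$ — which contains $\sup B$ when $\sup B>d$, and may be skipped otherwise since then $B\subset(-\infty,d]$ — gives $|A+(B\cap(-\infty,d])|<|A|+|B\cap(-\infty,d]|+\delta$. Next, apply the preceding lemma to the reflected pair $\bigl(-A,\ -(B\cap(-\infty,d])\bigr)$ with the interval $(-c,\infty)$, which contains the supremum $-\inf\bigl(B\cap(-\infty,d]\bigr)$ exactly when that infimum is less than $c$ (and may be skipped otherwise); using $-(B\cap[c,d])=\bigl(-(B\cap(-\infty,d])\bigr)\cap(-\infty,-c]$ and the reflection-invariance of Lebesgue measure, this yields $|A+(B\cap J)|<|A|+|B\cap J|+\delta$. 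Finally I would run the same two steps on the first summand: with $C=B\cap J$ and using $A+C=C+A$, apply the preceding lemma to $(C,A)$ with an interval $(b,\infty)\ni\sup A$, and then to $\bigl(-C,\ -(A\cap(-\infty,b])\bigr)$ with the interval $(-a,\infty)$, obtaining $|(A\cap I)+(B\cap J)|=|C+(A\cap I)|<|C|+|A\cap I|+\delta=|A\cap I|+|B\cap J|+\delta$, which is the claim.

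I do not anticipate a genuine obstacle: the argument is pure bookkeeping once the preceding lemma is in hand. The only things to watch are the reduction to closed intervals (so that the measure-zero boundary discrepancy really is harmless), the correct identification of which half-line to delete at each of the four stages — two for $B$ and two for $A$, each reached either directly, after reflecting in the origin, or after interchanging the two summands — and the routine passage from non-strict to strict inequalities described above.
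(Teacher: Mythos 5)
Your proof is correct and carries out precisely the argument the paper gestures at in its one-line proof: four applications of the preceding lemma, using the reflection $x\mapsto -x$ and commutativity $A+B=B+A$ to reach all four truncations. The bookkeeping points you flag (closure of the intervals so that the intersections stay compact, the passage from strict to non-strict and back via a smaller $\delta'$, and the trivial case when $A\cap I$ or $B\cap J$ is empty) are exactly the details the paper leaves to the reader.
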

This is proved by applying the preceding lemma up to four times, and exploiting
the symmetry $x\mapsto -x$ of $\reals$. \qed

In $\reals^d=\reals^1\times\reals^{d-1}$ for $d>1$, the following partial analogue holds.
\begin{lemma} \label{lemma:pre2Dlocalization}
Let $A,B\subset\reals^d$ be compact. Let $b\in\reals^1$ and set 
$B^-=B\cap \pi^{-1}((-\infty,b])$ and $B^+=B\setminus B^-$.
Let $a=\sup\set{x\in\reals^1: A_x\ne\emptyset}$, and
suppose that $|A_a|\ge \sup_{x\ge b} |B_x|$.
If $|A|,|B|=1+O(\delta)$ and if 
\[|t A+(1-t) B|<|A|^t|B|^{1-t}+\delta\]
then
\[|tA+(1-t) B^-|<|A|^t|B^-|^{1-t}+O(\delta)+O(|B^+|^2).\]
\end{lemma}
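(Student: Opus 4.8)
The plan is to quantify the loss incurred when $B$ is replaced by $B^-$ inside the sumset: I will establish the lower bound
\[ |tA+(1-t)B| \ge |tA+(1-t)B^-| + (1-t)|B^+| , \]
and then check that the gain $(1-t)|B^+|$ is, up to errors $O(\delta)$ and $O(|B^+|^2)$, at least as large as the amount $|A|^t\bigl(|B|^{1-t}-|B^-|^{1-t}\bigr)$ by which the right-hand side of the near-equality hypothesis decreases when $B$ is replaced by $B^-$. Since the statement is scale-invariant, I assume $|A|,|B|=1+O(\delta)$ throughout.

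It is convenient to dispose of the range $|B^+|\ge\tfrac12$ at once. There $|tA+(1-t)B^-|\le|tA+(1-t)B|\le|A|^t|B|^{1-t}+\delta$, which is $O(1)$, while $|A|^t|B^-|^{1-t}\ge 0$; since $|B^+|^2\ge\tfrac14$, the asserted bound holds trivially with an absolute implied constant in the term $O(|B^+|^2)$. So assume henceforth $|B^+|<\tfrac12$; then $|B^-|=|B|-|B^+|$ is bounded below, say $|B^-|\ge\tfrac14$, once $\delta$ is small.

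For the displayed lower bound, put $c=ta+(1-t)b$ and consider $S=t\bigl(\{a\}\times A_a\bigr)+(1-t)B^+$. Since $\{a\}\times A_a\subset A$ and $B^+\subset B$ we have $S\subset tA+(1-t)B$; and since $\pi(S)\subset(c,\infty)$ while $\pi\bigl(tA+(1-t)B^-\bigr)\subset(-\infty,c]$, the sets $S$ and $tA+(1-t)B^-$ are disjoint, whence $|tA+(1-t)B|\ge|tA+(1-t)B^-|+|S|$. Slicing $S$ by the fibres of $\pi$ and substituting $x=(w-ta)/(1-t)$ for the base coordinate $w$ gives $|S|=(1-t)\int_b^\infty|tA_a+(1-t)B_x|\,dx$, and for each $x>b$ the hypothesis $|A_a|\ge|B_x|$ together with the Brunn--Minkowski inequality in $\reals^{d-1}$ yields
\[ |tA_a+(1-t)B_x| \ge \bigl(t|A_a|^{1/(d-1)}+(1-t)|B_x|^{1/(d-1)}\bigr)^{d-1} \ge |B_x| , \]
so that $|S|\ge(1-t)\int_b^\infty|B_x|\,dx=(1-t)|B^+|$. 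This is the crux, and the only delicate point: discarding the top fibre $A_a$ outright would give only $|S|\ge(1-t)^{d-1}|B^+|$, which is too weak, and it is precisely to recover the full factor $(1-t)$ that the hypothesis $|A_a|\ge\sup_{x\ge b}|B_x|$ must be exploited in the form above.

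Combining, $|tA+(1-t)B^-|<|A|^t|B|^{1-t}+\delta-(1-t)|B^+|$, so it remains to verify the elementary inequality $|A|^t|B|^{1-t}-(1-t)|B^+|\le|A|^t|B^-|^{1-t}+O(\delta)+O(|B^+|^2)$. I would do this by writing $|A|^t=1+O(\delta)$, using concavity of $s\mapsto s^{1-t}$ to bound $|B|^{1-t}-|B^-|^{1-t}\le(1-t)|B^-|^{-t}|B^+|$, and expanding $|B^-|^{-t}=\bigl(1+O(\delta)-|B^+|\bigr)^{-t}=1+O(|B^+|)+O(\delta)$, which is legitimate since $|B^-|\ge\tfrac14$; the remaining bookkeeping is routine, and all implied constants are uniform for $t$ in the compact set $\Lambda\subset(0,1)$.
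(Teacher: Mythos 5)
Your proof is correct and follows essentially the same route as the paper: you peel off the slab $B^+$, show $t(\{a\}\times A_a)+(1-t)B^+$ is disjoint from $tA+(1-t)B^-$, use Brunn--Minkowski in $\reals^{d-1}$ together with the hypothesis $|A_a|\ge\sup_{x\ge b}|B_x|$ to get the crucial lower bound $(1-t)|B^+|$ (rather than the useless $(1-t)^{d}|B^+|$), and then expand $|A|^t|B^-|^{1-t}$. The only cosmetic differences are that the paper first translates so that $a=b=0$ and expands $(1-|B^+|+O(\delta))^{1-t}$ directly by Taylor's theorem, whereas you keep the original coordinates, dispatch the range $|B^+|\ge\tfrac12$ separately, and use concavity of $s\mapsto s^{1-t}$; both variants of the bookkeeping give the same conclusion.
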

When this result is invoked below, 
it will be crucial that the final term on the right is $o(|B^+|)$, rather than merely $O(|B^+|)$.

\begin{proof}
By translating $A$ we may reduce to the case in which $a=b=0$.  Then 
\begin{align*} t A + (1-t) B^- &\subset \pi^{-1}((-\infty,0]), 
\\ t (\set{0}\times A_0)+(1-t) B^+&\subset\pi^{-1}([0,\infty)).  \end{align*}
Thus \[|t A+(1-t) B|\ge  |t A+(1-t) B^-| +  |t (\set{0}\times A_0)+(1-t) B^+|.\]

We claim that
\begin{equation} \label{eq:slicebound1} |t (\set{0}\times A_0)+(1-t) B^+|\ge (1-t) |B^+|.\end{equation}
Indeed, for any $x\ge 0$, 
\[\set{y\in\reals^{d-1}: (x,y)\in t (\set{0}\times A_0)+(1-t) B^+} =t A_0+(1-t) B_{x/(1-t)}.\] 
By the Brunn-Minkowski inequality for $\reals^{d-1}$ in the form \eqref{eq:BMmult},
\[|t A_0+(1-t) B_{x/(1-t)}|\ge  |A_0|^t |B_{x/(1-t)}|^{1-t} \ge |B_{x/(1-t)}|\]
by the hypothesis $|A_0|\ge|B_{x/(1-t)}|$. Upon integrating over all $x\ge 0$, we obtain 
\eqref{eq:slicebound1} via Fubini's theorem.

Therefore 
\[|A|^t|B|^{1-t}+\delta>|tA+(1-t) B| \ge |t A+(1-t) B^-| + (1-t)|B^+|\]
and consequently
\[ |t A+(1-t) B^-| <|A|^t|B|^{1-t}+\delta - (1-t)|B^+| = 1+O(\delta)-(1-t)|B^+|.  \]
On the other hand, if $|B^+|$ is small then
\begin{align*}
|A|^t|B^-|^{1-t} &= |A|^t(|B|-|B^+|)^{1-t} 
\\
&= (1+O(\delta))^t(1-|B^+|+O(\delta))^{1-t}
\\
&= (1+O(\delta)) (1-(1-t)|B^+| + O(\delta)+O(|B^+|^2))
\\
&= 1-(1-t)|B^+| + O(\delta) + O(|B^+|^2).
\end{align*}
Therefore
\[ |tA+(1-t)B^-|< |A|^t|B^-|^{1-t} + O(\delta)+O(|B^+|^2). \]
\end{proof}

Two applications of the preceding lemma give the following extension.
\begin{lemma} \label{lemma:localization}
Let $A,B\subset\reals^d$ be compact,
let $[c_0,c_1]\subset\reals^1$ be a compact interval,
and let $\tilde B=B\cap ([c_0,c_1]\times\reals^{d-1})$.
Let 
\[a_0=\sup\set{x\in\reals: A_x\ne\emptyset}\ \text{ and }\ a_1=\inf\set{x\in\reals: A_x\ne\emptyset}.\] 
Suppose that \[|A_{a_1}|\ge \sup_{x\ge c_1} |B_x| \text{ and } |A_{a_0}|\ge \sup_{x\le c_0} |B_x|.\]
If $|A|,|B|=1+O(\delta)$ and $|t A+(1-t) B|<1+O(\delta)$
then \[|t A+(1-t) \tilde B|<
|A|^t|\tilde B|^{1-t}
+O(\delta)+O(|B\setminus\tilde B|^2).\]
\end{lemma}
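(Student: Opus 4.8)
The plan is to obtain Lemma~\ref{lemma:localization} by applying Lemma~\ref{lemma:pre2Dlocalization} twice, once to peel off the portion of $B$ lying to the right of $c_1$, and once to peel off the portion lying to the left of $c_0$. Write $B^+ = B\cap\pi^{-1}((c_1,\infty))$, $B^- = B\setminus B^+$, so that $B^- = B\cap\pi^{-1}((-\infty,c_1])$. The hypothesis $|A_{a_0}|\ge\sup_{x\ge c_1}|B_x|$ is exactly the hypothesis needed to invoke Lemma~\ref{lemma:pre2Dlocalization} with $b=c_1$ and with the role of ``$a$'' played by $a_0=\sup\{x:A_x\ne\emptyset\}$ (recall that lemma is stated for the rightmost slice of $A$). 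This yields
\[|tA+(1-t)B^-| < |A|^t|B^-|^{1-t} + O(\delta) + O(|B^+|^2).\]
In particular, since $|B^+|=O(1)$ is small (it is bounded by $|B\setminus\tilde B|$, which we may assume small, else there is nothing to prove), we get $|B^-| = 1+O(\delta)$ and $|tA+(1-t)B^-| < 1 + O(\delta)$, so the pair $(A,B^-)$ again satisfies the standing hypotheses of Lemma~\ref{lemma:pre2Dlocalization}.

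Now I would apply the symmetric version of Lemma~\ref{lemma:pre2Dlocalization} to the pair $(A,B^-)$, using the reflection $x\mapsto -x$ of $\reals^1$ to convert the ``leftmost slice'' statement into the ``rightmost slice'' form in which the lemma is phrased. Setting $C^- = B^-\cap\pi^{-1}((-\infty,c_0))$ and noting $B^-\setminus C^- = B\cap([c_0,c_1]\times\reals^{d-1}) = \tilde B$, the hypothesis $|A_{a_1}|\ge\sup_{x\le c_0}|B_x|\ge\sup_{x\le c_0}|B^-_x|$ is what is required, with the leftmost slice of $A$ (at $x=a_1$) playing the role of the extreme slice. This produces
\[|tA+(1-t)\tilde B| < |A|^t|\tilde B|^{1-t} + O(\delta) + O(|C^-|^2).\]
Combining, and using $|B^+| + |C^-| = |B\setminus\tilde B|$ together with $|B^+|^2 + |C^-|^2 \le (|B^+|+|C^-|)^2 = |B\setminus\tilde B|^2$, gives the asserted estimate
\[|tA+(1-t)\tilde B| < |A|^t|\tilde B|^{1-t} + O(\delta) + O(|B\setminus\tilde B|^2).\]

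I do not expect any serious obstacle here; this is a clean composition of an already-established lemma with itself. The one point requiring a little care is bookkeeping of the various error terms across the two applications: in the second application the ``$O(\delta)$'' that appears is really $O(\delta) + O(|B^+|^2)$ inherited from the first step, since the pair $(A,B^-)$ only satisfies $|tA+(1-t)B^-| < |A|^t|B^-|^{1-t} + O(\delta)+O(|B^+|^2)$ rather than $< |A|^t|B^-|^{1-t}+O(\delta)$. One must check that Lemma~\ref{lemma:pre2Dlocalization} is robust under replacing its hypothesis $|tA+(1-t)B|<|A|^t|B|^{1-t}+\delta$ by $|tA+(1-t)B|<|A|^t|B|^{1-t}+O(\delta)+O(|B^+|^2)$; inspecting its proof, the quantity $\delta$ enters only additively and the conclusion simply carries along whatever additive slack is present in the hypothesis, so this causes no difficulty. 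A second minor point: one should verify that $|A_{a_1}|\ge\sup_{x\le c_0}|B_x|$ still holds with $B$ replaced by $B^-$, but this is immediate since $B^-\subset B$. The remark following the statement — that the error term is $o(|B\setminus\tilde B|)$ rather than merely $O(|B\setminus\tilde B|)$ — is automatic from the quadratic form $O(|B\setminus\tilde B|^2)$ once $|B\setminus\tilde B|$ is small.
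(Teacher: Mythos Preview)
Your proposal is correct and follows exactly the approach the paper indicates: the paper's own proof is the single sentence ``Two applications of the preceding lemma give the following extension.'' One small point: you invoke the hypotheses as $|A_{a_0}|\ge\sup_{x\ge c_1}|B_x|$ and $|A_{a_1}|\ge\sup_{x\le c_0}|B_x|$, whereas the lemma as stated in the paper has these swapped; your pairing (sup-slice of $A$ controls the right tail of $B$, inf-slice controls the left tail) is the one actually required for the two applications of Lemma~\ref{lemma:pre2Dlocalization} to go through, so this appears to be a typo in the paper's statement rather than an error on your part.
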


\section{Step One: Vertical normalization}

Let $A,B\subset\reals^d$ be nonempty compact sets.  Denote the associated sum set by $S=t A+(1-t)B$.  
For $x\in \reals^1$, $A_x=\set{y\in\reals^1: (x,y)\in A}$; $B_x,S_x$ are the corresponding sets.
Since $A,B,S$ are compact, the functions  $|A_x|,|B_x|,|S_x|$ 
are bounded, are upper semicontinuous, and have compact supports.  Define 
$\norm{A}_\infty$ to be the maximum of the function $x\mapsto |A_x|$,
and define the corresponding sets  $\norm{B}_\infty,\norm{S}_\infty$ in the same way.

\begin{lemma} \label{lemma:verticalsupratio}
Suppose that $A,B\subset\reals^d$ satisfy the hypotheses \eqref{hypos}.
Then
\begin{equation} \frac{\norm{A}_\infty}{\norm{S}_\infty} = 1+O(\delta^{1/2}).  \end{equation}
The same holds for $\norm{B}_\infty/\norm{S}_\infty$.  \end{lemma}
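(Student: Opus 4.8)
\emph{Plan.} Abbreviate $M_A=\norm{A}_\infty$, $M_B=\norm{B}_\infty$, $M_S=\norm{S}_\infty$, where $S=tA+(1-t)B$, and by upper semicontinuity choose $\xi,\eta\in\reals^1$ with $|A_\xi|=M_A$, $|B_\eta|=M_B$. The substitution $(A,B,t)\mapsto(B,A,1-t)$ preserves the hypotheses \eqref{hypos} (after $\Lambda\mapsto 1-\Lambda$), so it suffices to prove $M_A/M_S=1+O(\delta^{1/2})$. I would establish three facts: $(i)$ $M_S\ge M_A^tM_B^{1-t}$ (and, in dimension $2$, the sharper $M_S\ge tM_A+(1-t)M_B\ge M_A^tM_B^{1-t}\,\Theta_t(M_A/M_B)$, with $\Theta_t$ as in the proof of Lemma~\ref{lemma:goodratio}); $(ii)$ $M_A/M_B=1+O(\delta^{1/2})$; $(iii)$ $M_S\le M_A(1+O(\delta^{1/2}))$. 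Granting these, $(i)$ and $(ii)$ give $M_S\ge M_A^tM_B^{1-t}=M_A(M_B/M_A)^{1-t}\ge M_A(1-O(\delta^{1/2}))$, which together with $(iii)$ proves the lemma; the assertion for $B$ follows by the symmetry.

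\emph{The easy inequality $(i)$.} Since $S_{t\xi+(1-t)\eta}\supset tA_\xi+(1-t)B_\eta$, applying the Brunn--Minkowski inequality \eqref{eq:BMmult} in $\reals^{d-1}$ to this one cross--section gives $M_S\ge|tA_\xi+(1-t)B_\eta|\ge M_A^tM_B^{1-t}$; in dimension $2$ the same computation gives $M_S\ge t|A_\xi|+(1-t)|B_\eta|=tM_A+(1-t)M_B$, which dominates $M_A^tM_B^{1-t}\,\Theta_t(M_A/M_B)$ by the refined arithmetic--geometric inequality.

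\emph{Inequalities $(ii)$ and $(iii)$.} Both rest on the fibre inequality $|S_{tx_1+(1-t)x_2}|\ge|A_{x_1}|^t|B_{x_2}|^{1-t}$, valid for all $x_1,x_2\in\reals^1$ by \eqref{eq:BMmult} in $\reals^{d-1}$, played against $|S|<1+\delta$. Freezing $x_2=\eta$ and substituting $x=tx_1+(1-t)\eta$ (an affine bijection of $\reals^1$ with Jacobian $t$) already yields
\[ 1+\delta>|S|\ \ge\ t\,M_B^{1-t}\!\int|A_{x_1}|^t\,dx_1\ \ge\ t\,M_B^{1-t}M_A^{t-1}|A|\ =\ t\,(M_B/M_A)^{1-t}\,(1+O(\delta)), \]
and, symmetrically, $(1-t)(M_A/M_B)^{t}<1+O(\delta)$; these confine $M_A/M_B$ to a fixed compact subinterval of $(0,\infty)$, but the loss of the factors $t,1-t$ makes them too weak. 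To sharpen, I would pass to the super--level sets $A'_\lambda=\set{x:|A_x|\ge\lambda}$, $B'_\lambda=\set{x:|B_x|\ge\lambda}$, $S'_\lambda=\set{x:|S_x|\ge\lambda}$: the fibre inequality gives $S'_\lambda\supset tA'_\lambda+(1-t)B'_\lambda$ whenever both are nonempty, and integrating in $\lambda$ (layer--cake) against $|S|<1+\delta$, using one--dimensional Brunn--Minkowski at each level, forces the aggregate one--dimensional deficit $\int\bigl(|tA'_\lambda+(1-t)B'_\lambda|-t|A'_\lambda|-(1-t)|B'_\lambda|\bigr)\,d\lambda$ to be $O(\delta)$. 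Examining the top band of levels, $\lambda\in(\min(M_A,M_B),\max(M_A,M_B)]$, where exactly one of $A'_\lambda,B'_\lambda$ is empty, the vertical ratio reappears; comparing the contribution of this band to $|S|$ with what $(i)$ permits, via the refined arithmetic--geometric inequality, forces $\Theta_t(M_A/M_B)\le 1+O(\delta)$, hence $M_A/M_B=1+O(\delta^{1/2})$, which is $(ii)$. For $(iii)$, once $(ii)$ is known the vanishing of the aggregate deficit lets one apply the one--dimensional near--equality Proposition~\ref{prop:nearlyintervals} to $(A'_\lambda,B'_\lambda)$ for all but a small exceptional set of levels, so these super--level sets are nearly equal intervals; reinserting this into the fibre inequality upgrades the crude estimate above to $M_S\le M_A(1+O(\delta^{1/2}))$.

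\emph{Main obstacle.} The crux is removing the dimensional loss: the naive fibre estimate is off by the factor $t$ or $1-t$ coming from the projection $x=tx_1+(1-t)x_2$, and recovering the sharp exponent forces one to use that $S$ is genuinely a \emph{sumset} in the base variable --- that the one--dimensional sumsets $tA'_\lambda+(1-t)B'_\lambda$ essentially fill $S'_\lambda$ rather than a $t$-proportion of it --- which is precisely what the one--dimensional near--equality statement, together with the global measure constraint, is meant to furnish.
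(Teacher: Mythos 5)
Your plan has the right skeleton — superlevel sets, layer-cake, one-dimensional Brunn--Minkowski at each level, and the sharp arithmetic--geometric inequality at the end — and you correctly diagnose that the naive fibre estimate loses a factor $t$ or $1-t$ from the Jacobian and that one must instead exploit that $S$ is a genuine sumset level-by-level. Step $(i)$ and the symmetry reduction are fine.

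The gap is in $(ii)$, and it is exactly the ``top band'' you flag as the obstacle. You assert that layer-cake plus one-dimensional Brunn--Minkowski ``forces the aggregate one-dimensional deficit $\int(|tA'_\lambda+(1-t)B'_\lambda|-t|A'_\lambda|-(1-t)|B'_\lambda|)\,d\lambda$ to be $O(\delta)$.'' This does not follow from the hypotheses alone. Writing (say) $M_A\ge M_B$, the identity behind that claim is
\[
\int_0^{M_B}\!\bigl(|tA'_\lambda+(1-t)B'_\lambda|-t|A'_\lambda|-(1-t)|B'_\lambda|\bigr)d\lambda
\;\le\; \bigl(|S|-t|A|-(1-t)|B|\bigr) + t\!\int_{M_B}^{M_A}\!|A'_\lambda|\,d\lambda - \!\int_{M_B}^{M_S}\!|S'_\lambda|\,d\lambda,
\]
and the term $t\int_{M_B}^{M_A}|A'_\lambda|\,d\lambda$ is not under control here: at this stage of the argument there is no upper bound on $|\pi(A)|$ (that only arrives in Lemma~\ref{lemma:piAbound}, which itself requires the present lemma's normalization), nor is there a way to play it off against $\int_{M_B}^{M_S}|S'_\lambda|\,d\lambda$, since for $\lambda$ between $M_B$ and $M_A$ the inclusion $S'_\lambda\supset tA'_{\lambda}+(1-t)B'_{\lambda}$ is vacuous ($B'_\lambda=\emptyset$) and the one-dimensional slice estimate only yields $|S_{tx'+(1-t)\eta}|\ge t|A_{x'}|+(1-t)M_B$, which is \emph{below} $\lambda$. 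Your subsequent appeal to ``examining the top band'' to extract $\Theta_t(M_A/M_B)\le 1+O(\delta)$ therefore has no quantitative handle on that band. The aggregate-deficit statement you want is essentially the lemma about $\phi$ which the paper proves \emph{after} this one, using both the present lemma's conclusion and the projection bound; invoking it here is circular.

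What closes the argument in the paper is an asymmetric rescaling of levels: one takes the level sets $\set{x:|A_x|>\gamma\lambda}$ and $\set{x:|B_x|>\tilde\gamma\lambda}$ with $\gamma,\tilde\gamma$ chosen so that $\gamma^t\tilde\gamma^{1-t}=1$ (so the slice inequality $|S_{tx'+(1-t)x''}|\ge|A_{x'}|^t|B_{x''}|^{1-t}$ still applies at each rescaled level) \emph{and} $\gamma^{-1}\norm{A}_\infty=\tilde\gamma^{-1}\norm{B}_\infty$, so the two rescaled superlevel sets become empty simultaneously and there is no top band at all. The layer-cake then gives the exact identities $\int_0^{\gamma^{-1}\norm{A}_\infty}|\set{x:|A_x|>\gamma\lambda}|\,d\lambda=\gamma^{-1}|A|$ and similarly for $B$, hence
\[
1+\delta>|S|\ge t\gamma^{-1}|A|+(1-t)\tilde\gamma^{-1}|B|=(1+O(\delta))\bigl(t\gamma^{-1}+(1-t)\tilde\gamma^{-1}\bigr),
\]
and the constraint $\gamma^t\tilde\gamma^{1-t}=1$ together with the quadratic sharpness of AM--GM forces $\gamma=1+O(\delta^{1/2})$, i.e.\ $\norm{A}_\infty/\norm{B}_\infty=1+O(\delta^{1/2})$. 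This rescaling device is the missing ingredient in your write-up; without it, $(ii)$ does not go through. (Your $(iii)$, incidentally, does more than the paper's proof attempts: the paper only extracts the $\norm{A}_\infty/\norm{B}_\infty$ ratio and a lower bound on $\norm{S}_\infty$ via your $(i)$, which is what is used downstream in \eqref{eq:verticalsupsnormalized}.)
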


\begin{proof}
By the Brunn-Minkowski inequality \eqref{eq:BMmult} for subsets of $\reals^{d-1}$,
\begin{equation} |S_{t x'+(1-t)x''}| \ge  |A_{x'}|^t |B_{x''}|^{1-t} \end{equation}
for all $x',x''$ such that $A_{x'},B_{x''}$ are nonempty.  Therefore  
\begin{equation} \label{sublevel}
\set{x: |S_x|>\lambda}\supset t\set{x': |A_{x'}|>\gamma\lambda} +(1-t)\set{x'': |B_{x''}|>\tilde \gamma\lambda}
\end{equation}
whenever both of the sets on the right are nonempty and $\gamma^t\tilde\gamma^{1-t}=1$.
Indeed for any such $x',x''$, 
\begin{equation}
|A_{x'}|^t |B_{x''}|^{1-t} > \gamma^t\lambda^t\tilde\gamma^{1-t}\lambda^{1-t}=\lambda.  \end{equation}
Therefore by the Brunn-Minkowski inequality for $\reals^1$,
\begin{equation}  \label{eq:BM:superlevelsets}
|\set{x: |S_x|>\lambda}|\ge t|\set{x: |A_x|>\gamma\lambda}| +(1-t)|\set{x: |B_x|>\tilde \gamma\lambda}|.
\end{equation}

The one-dimensional Brunn-Minkowski inequality is used here in its additive form.
The following steps rely on its taking this simple form for $d=1$, in contrast to the
general form $|tU+(1-t)V|\ge (t|U|^{1/d}+(1-t)|V|^{1/d})^d$ in dimension $d$.
It is this step which leads us to factor $\reals^d$ as $\reals^1\times\reals^{d-1}$.
A subsequent step will require a simultaneous factorization as $\reals^{d-1}\times\reals^1$,
forcing the restriction $d=2$ in our hypotheses.

Define 
\[\gamma=\big(\norm{A}_\infty/\norm{B}_\infty\big)^{t}
\text{ and }  \tilde\gamma = \big(\norm{B}_\infty/\norm{A}_\infty\big)^{1-t} = \gamma^{-(1-t)/t}\]
so that $\gamma^t\tilde\gamma^{1-t}=1$ and $\gamma^{-1}\norm{A}_\infty = \tilde\gamma^{-1}\norm{B}_\infty$.
Then \eqref{sublevel} applies for all $\lambda < \gamma^{-1}\norm{A}_\infty = \tilde\gamma^{-1}\norm{B}_\infty$.

By the hypothesis $|tA+(1-t)B|<1+\delta$,
\begin{align*}
1+\delta& >|S|
\\
&=\int_0^\infty |\set{x: |S_x|>\lambda}|\,d\lambda
\\
&\ge \int_0^{\gamma^{-1}\norm{A}_\infty} |\set{x: |S_x|>\lambda}|\,d\lambda
\\
&\ge t\int_0^{\gamma^{-1}\norm{A}_\infty} |\set{x: |A_x|>\gamma \lambda}|\,d\lambda
+
(1-t)\int_0^{\gamma^{-1}\norm{A}_\infty} |\set{x: |B_x|>\tilde \gamma \lambda}|\,d\lambda
\\
&=t\gamma^{-1} \int_0^{\norm{A}_\infty} |\set{x: |A_x|>\lambda}|\,d\lambda
+
(1-t)\tilde\gamma^{-1} \int_0^{\norm{B}_\infty} |\set{x: |B_x|> \lambda}|\,d\lambda
\\
&=t\gamma^{-1}|A|+(1-t)\tilde\gamma^{-1} |B|
\\ &=t(1+O(\delta))\gamma^{-1}+(1-t)(1+O(\delta))\tilde\gamma^{-1}
\end{align*}
so
\[ \big(t\gamma^{-1}+(1-t)\gamma^{t/(1-t)} \big) \le 1+\delta  +O(\delta)\big(\gamma^{-1}+\gamma^{t/(1-t)}\big).  \]
Since $t$ is assumed to lie in a fixed compact subset of $(0,1)$,
this forces \[\gamma=1+O(\delta^{1/2})\] provided that $\delta$ is sufficiently small. 
\end{proof}

\section{Step Two: Upper bounds for horizontal projections}
For $E\subset\reals^d=\reals^1\times\reals^{d-1}$, denote by $\pi(E)$ the projection
of $E$ onto the factor $\reals^1$.
By Lemma~\ref{lemma:verticalsupratio}, it is possible to make
a change of variables of $\reals^1\times\reals^{d-1}$ of the form
$(x_1,x')\mapsto(r_1x_1,r_2x')$ with $r_1r_2^{d-1}=1$ to ensure that
\begin{equation} \label{eq:verticalsupsnormalized} \norm{A}_\infty=1+O(\delta^{1/2}) \ \text{ and } \ 
\norm{B}_\infty =1+O(\delta^{1/2}). \end{equation}
We assume this henceforth.  

\begin{lemma}\label{lemma:piAbound} 
There exists $M<\infty$ such that for all sufficiently small $\delta>0$, whenever
$A,B,t$ satisfy \eqref{hypos} and the supplementary normalization \eqref{eq:verticalsupsnormalized},
\begin{equation} M^{-1}\le \min(|\pi(A)|,|\pi(B)|)\le \max(|\pi(A)|,|\pi(B)|)\le M. \end{equation} \end{lemma}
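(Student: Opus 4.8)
The plan is to prove the two inequalities by direct slicing arguments; no compactness or limiting procedure is needed. The lower bounds are immediate: integrating over horizontal slices gives $|A|=\int_{\pi(A)}|A_x|\,dx\le\norm{A}_\infty\,|\pi(A)|$, so the normalization $\norm{A}_\infty=1+O(\delta^{1/2})$ together with $|A|=1+O(\delta)$ forces $|\pi(A)|\ge\tfrac12$ once $\delta$ is small, and symmetrically $|\pi(B)|\ge\tfrac12$.

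For the upper bound on $|\pi(A)|$ --- the real content of the lemma --- I would use a single horizontal slice of $B$ of maximal measure. Choose $x_0\in\reals^1$ with $|B_{x_0}|=\norm{B}_\infty$; such $x_0$ exists because $x\mapsto|B_x|$ is upper semicontinuous with compact support. For each $x'\in\pi(A)$ pick any $y'\in A_{x'}$; then $S_{tx'+(1-t)x_0}\supset ty'+(1-t)B_{x_0}$, and hence $|S_{tx'+(1-t)x_0}|\ge(1-t)^{d-1}\norm{B}_\infty$. The point to stress is that this lower bound is uniform over all of $\pi(A)$, \emph{including} those $x'$ for which $A_{x'}$ is a null set or a single point --- this is exactly what forbids $A$ from having a long, nearly measure-zero tail. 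Since $x'\mapsto tx'+(1-t)x_0$ carries $\pi(A)$ bijectively onto $t\pi(A)+(1-t)x_0$ and multiplies one-dimensional measure by $t$, integrating the pointwise bound over that set yields
\[
1+\delta>|S|\ \ge\ \int_{t\pi(A)+(1-t)x_0}|S_x|\,dx\ \ge\ t\,(1-t)^{d-1}\norm{B}_\infty\,|\pi(A)|.
\]
Since $t$ ranges over the fixed compact set $\Lambda\subset(0,1)$ and $\norm{B}_\infty\ge\tfrac12$, this gives $|\pi(A)|\le M$ with $M$ depending only on $\Lambda$ and $d$. The bound $|\pi(B)|\le M$ follows identically, using instead a slice $A_{x_0}$ of maximal measure and interchanging the roles of $t$ and $1-t$; enlarging $M$ if necessary handles all four inequalities at once.

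I do not expect a genuine obstacle. The one subtlety is the point just flagged: the fat slice must be taken from the \emph{opposite} set, so that the translate $(1-t)B_{x_0}$ already carries $(d-1)$-dimensional measure comparable to $1$ no matter how thin $A_{x'}$ is; trying instead to bound $|S_x|$ from below by a slice of $A$ itself would fail precisely where $A_{x'}$ is thin. The vertical normalization \eqref{eq:verticalsupsnormalized} enters only to ensure that $\norm{A}_\infty$ and $\norm{B}_\infty$ are bounded above and below by absolute constants.
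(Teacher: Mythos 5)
Your proof is correct and is essentially the same argument as the paper's: take a horizontal slice of one set of maximal (hence nearly unit) measure, add it to the other set, and slice the resulting subset of $tA+(1-t)B$ via Fubini. The only cosmetic difference is that the paper uses a fat slice of $A$ to bound $|\pi(B)|$ and you use a fat slice of $B$ to bound $|\pi(A)|$; both conclude by symmetry, and your treatment of the lower bound via $|A|\le\norm{A}_\infty|\pi(A)|$ matches what the paper leaves implicit.
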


\begin{proof}
Since $\norm{A}_\infty=1+O(\delta^{1/2})$, $A$ must contain some set of the form $\set{x}\times E$
where $E\subset\reals^{d-1}$ is a compact set of measure $1+O(\delta^{1/2})$ and $x\in\reals^1$.
Therefore $\tatb$ contains $ (\set{tx}\times tE)+(1-t)B$, and consequently
\[ 1+\delta > |\tatb|\ge (1-t) |\pi(B)|\cdot t^{d-1} |E| \ge (1+O(\delta^{1/2}))t^{d-1}(1-t)|\pi(B)|.  \]
For sufficiently small $\delta$, this gives the upper bound for $|\pi(B)|$. $A$ is treated in the same way.

On the other hand, the lower bounds for the measures of the projections
are immediate consequences of the upper bounds \eqref{eq:verticalsupsnormalized}.
\end{proof}

\section{Step Three: Horizontal structure}
For $s\ge 0$ define 
\begin{align*}
&\scripta_s=\set{x\in\reals^1: |A_x|>s}
\\
&\scriptb_s=\set{x\in\reals^1: |B_x|>s}  
\\
&\scripts_s=\set{x\in\reals^1: |S_x|>s}
\end{align*}
where $S = tA+(1-t)B$.
By making a measure-preserving affine change of variables, we have reduced matters to the case
in which by Lemma~\ref{lemma:piAbound}, $|\scripta_s|\le M$ and $|\scriptb_s|\le M$ for all $s>0$,
where $M<\infty$ is some absolute constant.

\begin{lemma} 
Let $A,B,t$ satisfy the hypotheses \eqref{hypos} and \eqref{eq:verticalsupsnormalized}.  Then
\begin{equation} \int_{\min(\norm{A}_\infty,\norm{B}_\infty)}^\infty (|\scripta_s|+|\scriptb_s|)\,ds = O(\delta^{1/2}). 
\end{equation} \end{lemma}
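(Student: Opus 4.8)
The plan is to deduce this directly from two facts already available at this point. The first is the uniform bound $|\scripta_s|\le M$ and $|\scriptb_s|\le M$, valid for every $s>0$: this is Lemma~\ref{lemma:piAbound} applied through the inclusions $\scripta_s\subset\pi(A)$ and $\scriptb_s\subset\pi(B)$, which hold because $|A_x|>s\ge 0$ forces $A_x\neq\emptyset$. The second is the normalization \eqref{eq:verticalsupsnormalized}, which gives $\norm{A}_\infty=1+O(\delta^{1/2})$ and $\norm{B}_\infty=1+O(\delta^{1/2})$, and therefore $\big|\,\norm{A}_\infty-\norm{B}_\infty\,\big|=O(\delta^{1/2})$.

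First I would observe that $\scripta_s=\emptyset$ for every $s\ge\norm{A}_\infty$, since $|A_x|\le\norm{A}_\infty$ for all $x$ by the definition of $\norm{A}_\infty$; likewise $\scriptb_s=\emptyset$ for $s\ge\norm{B}_\infty$. Hence the integrand $|\scripta_s|+|\scriptb_s|$ is supported in the interval $\big[\min(\norm{A}_\infty,\norm{B}_\infty),\ \max(\norm{A}_\infty,\norm{B}_\infty)\big]$, and throughout that interval $s$ already lies at or above the smaller of $\norm{A}_\infty,\norm{B}_\infty$, so the corresponding one of $\scripta_s,\scriptb_s$ is empty and the integrand is at most $M$ there.

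Putting these two observations together gives
\begin{align*}
\int_{\min(\norm{A}_\infty,\norm{B}_\infty)}^\infty\big(|\scripta_s|+|\scriptb_s|\big)\,ds
&\le M\,\big(\max(\norm{A}_\infty,\norm{B}_\infty)-\min(\norm{A}_\infty,\norm{B}_\infty)\big)\\
&= M\,\big|\,\norm{A}_\infty-\norm{B}_\infty\,\big| = O(\delta^{1/2}),
\end{align*}
as asserted. I do not anticipate any genuine difficulty here: the argument uses nothing beyond the two cited results, and the only points worth a word are the measurability of $s\mapsto|\scripta_s|+|\scriptb_s|$ (immediate, since each term is a nonincreasing function of $s$, and $|A_x|,|B_x|$ are upper semicontinuous hence Borel) and the elementary inclusions $\scripta_s\subset\pi(A)$, $\scriptb_s\subset\pi(B)$ that let Lemma~\ref{lemma:piAbound} supply the constant $M$. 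The ``real'' content of the lemma is thus entirely inherited from the sup-slice normalization of the previous section.
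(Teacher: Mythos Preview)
Your argument is correct and is essentially the same as the paper's: both bound the integrand by the constant coming from Lemma~\ref{lemma:piAbound} and observe that the support of the integrand has length $O(\delta^{1/2})$ because $\norm{A}_\infty$ and $\norm{B}_\infty$ are both $1+O(\delta^{1/2})$. Your observation that on the relevant interval one of $\scripta_s,\scriptb_s$ is already empty (so the integrand is bounded by $M$ rather than $2M$) is a harmless refinement but not a substantive difference.
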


\begin{proof}
The integrand is majorized by $2M$, and is supported on the small interval $[1-O(\delta^{1/2}),1+O(\delta^{1/2})]$.
\end{proof}

Define
\begin{equation}\phi(s)=|\scripts_s|-t|\scripta_{s}|- (1-t) |\scriptb_{s}|.\end{equation}

\begin{lemma}
Let $A,B,t$ satisfy the hypotheses \eqref{hypos} and the normalization \eqref{eq:verticalsupsnormalized}. 
If $s< \min(\norm{A}_\infty,\norm{B}_\infty)$ then $\phi(s)\ge 0$.  Moreover, 
\begin{equation} \int_0^{\min(\norm{A}_\infty,\norm{B}_\infty)} \phi(s)\,ds =O(\delta^{1/2}).  \end{equation}
\end{lemma}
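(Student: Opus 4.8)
The plan is to read off both assertions from the super-level set inclusion \eqref{sublevel} used in the proof of Lemma~\ref{lemma:verticalsupratio}, now specialized to the trivial choice of exponents $\gamma=\tilde\gamma=1$.

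For the pointwise inequality I would fix $s<\min(\norm{A}_\infty,\norm{B}_\infty)$. Since $x\mapsto|A_x|$ and $x\mapsto|B_x|$ are upper semicontinuous with compact support, their maxima are attained, so $\scripta_s$ and $\scriptb_s$ are both nonempty. With $\gamma=\tilde\gamma=1$ the inclusion \eqref{sublevel} reads $\scripts_s\supset t\scripta_s+(1-t)\scriptb_s$, and this is legitimate because $|A_{x'}|^t|B_{x''}|^{1-t}>s^t s^{1-t}=s$ whenever $|A_{x'}|>s$ and $|B_{x''}|>s$. Applying the one-dimensional Brunn--Minkowski inequality in its additive form to the right-hand side gives $|\scripts_s|\ge t|\scripta_s|+(1-t)|\scriptb_s|$, which is exactly $\phi(s)\ge 0$.

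For the integral bound, set $m=\min(\norm{A}_\infty,\norm{B}_\infty)$. By Fubini's theorem $\int_0^\infty|\scripts_s|\,ds=|S|<1+\delta$ and $\int_0^\infty|\scripta_s|\,ds=|A|=1+O(\delta)$, and similarly for $B$; in particular all three integrands are integrable on $[0,\infty)$, so $\phi$ is integrable on $[0,m]$. Writing $\int_0^m=\int_0^\infty-\int_m^\infty$ for the $A$- and $B$-terms and discarding the (nonnegative) tail $\int_m^\infty|\scripts_s|\,ds$, one obtains
\[
\int_0^m\phi(s)\,ds
\le |S|-t|A|-(1-t)|B|+t\int_m^\infty|\scripta_s|\,ds+(1-t)\int_m^\infty|\scriptb_s|\,ds .
\]
The first three terms are at most $(1+\delta)-t(1-\delta)-(1-t)(1-\delta)=2\delta$, while the preceding lemma bounds $\int_m^\infty(|\scripta_s|+|\scriptb_s|)\,ds$ by $O(\delta^{1/2})$; hence $\int_0^m\phi(s)\,ds=O(\delta^{1/2})$. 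Since $\phi\ge 0$ on $[0,m]$ by the first part, this integral is in fact nonnegative as well.

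There is no serious obstacle here: the only points that need attention are that the super-level inclusion persists with constants $\gamma=\tilde\gamma=1$ exactly on the range $s<\min(\norm{A}_\infty,\norm{B}_\infty)$ where both slices are nonempty, and that the discrepancy between $\int_0^m$ and $\int_0^\infty$ of $|\scripta_s|$ and $|\scriptb_s|$ is precisely the quantity controlled by the previous lemma.
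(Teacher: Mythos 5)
Your argument is correct and follows the paper's proof essentially line for line: both parts come from the inclusion $\scripts_s\supset t\scripta_s+(1-t)\scriptb_s$ (with $\gamma=\tilde\gamma=1$) plus one-dimensional Brunn--Minkowski, and the integral bound is obtained by the same splitting $\int_0^m=\int_0^\infty-\int_m^\infty$ together with the $O(\delta^{1/2})$ tail estimate from the preceding lemma. The only cosmetic difference is that you cite the tail lemma directly, whereas the paper re-derives it inline as $M\cdot O(\delta^{1/2})$; the substance is the same.
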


\begin{proof}
Whenever $\scripta_s,\scriptb_s$ are both nonempty, 
$\scripts_s\supset t \scripta_{s}+ (1-t)\scriptb_{s}$ by \eqref{eq:BM:superlevelsets}.
By this inclusion and the one-dimensional Brunn-Minkowski inequality, 
\[\phi(s)\ge |t\scripta_s+(1-t)\scriptb_s|-t|\scripta_s|-(1-t)|\scriptb_s|\ge 0 \]
whenever $s< \min(\norm{A}_\infty,\norm{B}_\infty)$.
Furthermore
\begin{align*}
\int_0^{\min(\norm{A}_\infty,\norm{B}_\infty)} 
\phi(s)\,ds &
\\
&\le |\tatb|-t|A|-(1-t)|B| 
\\
&\qquad + \int_{\min(\norm{A}_\infty,\norm{B}_\infty)}^\infty (t|\scripta_s|+(1-t)|\scriptb_s|)\,ds 
\\
&\le \delta + \int_{\min(\norm{A}_\infty,\norm{B}_\infty)}^{\max(\norm{A}_\infty,\norm{B}_\infty)}
M\,ds
\\
&<\delta+O(\delta^{1/2}).  \end{align*}
\end{proof}

\begin{lemma} \label{lemma:gotintervals}
Let $A,B,t$ satisfy the hypotheses \eqref{hypos} and the normalization \eqref{eq:verticalsupsnormalized}. 
There exists $C_0<\infty$ with the following property.
Suppose that $0<2\eta\le \sigma<\min(\norm{A}_\infty,\norm{B}_\infty)$, and that
\[ \min(t |\scripta_\sigma|,\,(1-t) |\scriptb_\sigma|)\ge C_0\delta^{1/2}\eta^{-1}.  \]
Then there exist $\tau\in[\sigma-\eta,\sigma]$ and intervals $I,J\subset\reals$ such that 
\begin{alignat*}{2}
&\scripta_\tau\subset I,\qquad &&|I|<|\scripta_\tau|+O(\delta^{1/2}\eta^{-1}),
\\
&\scriptb_\tau\subset J, &&|J|<|\scriptb_\tau|+O(\delta^{1/2}\eta^{-1}).
\end{alignat*}
\end{lemma}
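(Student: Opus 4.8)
The plan is to run a pigeonhole/averaging argument on the function $\phi(s)$ over the interval $[\sigma-\eta,\sigma]$, using the two integral bounds already established: $\int_0^{\min(\|A\|_\infty,\|B\|_\infty)}\phi(s)\,ds = O(\delta^{1/2})$ and $\int_{\min}^\infty(|\scripta_s|+|\scriptb_s|)\,ds = O(\delta^{1/2})$. Since $\phi\ge 0$ on $[\sigma-\eta,\sigma]$ (as $\sigma<\min(\|A\|_\infty,\|B\|_\infty)$) and the interval has length $\eta$, the mean value of $\phi$ there is $O(\delta^{1/2}\eta^{-1})$, so there exists $\tau\in[\sigma-\eta,\sigma]$ with $\phi(\tau) = O(\delta^{1/2}\eta^{-1})$; refining slightly, we can simultaneously arrange that $\tau$ be a point where $|\scripta_\tau|,|\scriptb_\tau|$ are comparable to their values near $\sigma$ (using monotonicity of $s\mapsto|\scripta_s|$, these can only increase as $s$ decreases, so $|\scripta_\tau|\ge|\scripta_\sigma|$ and likewise for $B$). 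At such a $\tau$,
\[
|t\scripta_\tau+(1-t)\scriptb_\tau| \le |\scripts_\tau| = t|\scripta_\tau|+(1-t)|\scriptb_\tau|+\phi(\tau) \le t|\scripta_\tau|+(1-t)|\scriptb_\tau| + O(\delta^{1/2}\eta^{-1}),
\]
so the pair $(\scripta_\tau,\scriptb_\tau)$ of subsets of $\reals^1$ nearly achieves equality in the one-dimensional Brunn-Minkowski inequality with error $\delta'=O(\delta^{1/2}\eta^{-1})$.

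**Next I would** invoke Proposition~\ref{prop:nearlyintervals} to conclude that $\scripta_\tau$ and $\scriptb_\tau$ are each contained in an interval barely larger than themselves. To apply that proposition we need the hypothesis $\delta' < \min(|\scripta_\tau|,|\scriptb_\tau|)$; this is exactly where the quantitative assumption $\min(t|\scripta_\sigma|,(1-t)|\scriptb_\sigma|)\ge C_0\delta^{1/2}\eta^{-1}$ enters. Since $t$ lies in a fixed compact subset of $(0,1)$, this assumption gives $\min(|\scripta_\sigma|,|\scriptb_\sigma|)\ge c\,C_0\delta^{1/2}\eta^{-1}$ for an absolute $c>0$, hence (by monotonicity) $\min(|\scripta_\tau|,|\scriptb_\tau|)\ge c\,C_0\delta^{1/2}\eta^{-1}$, which dominates $\delta'=O(\delta^{1/2}\eta^{-1})$ once $C_0$ is chosen large enough relative to the implied constants. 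Then Proposition~\ref{prop:nearlyintervals} yields intervals $I\supset\scripta_\tau$, $J\supset\scriptb_\tau$ with $|I|<|\scripta_\tau|+\delta'$ and $|J|<|\scriptb_\tau|+\delta'$, which is precisely the asserted conclusion.

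**One technical wrinkle** is that the raw averaging only controls $\int\phi$, not $\int(|\scripta_s|+|\scriptb_s|)$ — but we do not need the latter over $[\sigma-\eta,\sigma]$; we only need a single good $\tau$, and a two-parameter pigeonhole (choosing $\tau$ in the subset of $[\sigma-\eta,\sigma]$ of measure $\ge\eta/2$ where $\phi(\tau)\le 2\eta^{-1}\int_{\sigma-\eta}^\sigma\phi\le O(\delta^{1/2}\eta^{-1})$) suffices, with monotonicity handling the lower bounds on the slice-measures automatically. **The main obstacle** I anticipate is bookkeeping the constants: one must verify that the $O(\delta^{1/2}\eta^{-1})$ errors produced by the averaging step, together with the $\delta'$ in the conclusion of Proposition~\ref{prop:nearlyintervals}, can all be absorbed into a single implied constant that is genuinely uniform in $\eta,\sigma,\delta$ and in $t\in\Lambda$ — and, in particular, that the hypothesis $2\eta\le\sigma$ (which guarantees $[\sigma-\eta,\sigma]$ stays in the region where $\phi\ge 0$ and away from $0$) is actually used correctly. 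Everything else is a direct chaining of the already-proved lemmas with the one-dimensional sharp result.
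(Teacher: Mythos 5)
Your proof is correct and follows exactly the same route as the paper: average $\phi$ over $[\sigma-\eta,\sigma]$ to find a $\tau$ with $\phi(\tau)=O(\delta^{1/2}\eta^{-1})$, use the monotonicity $\scripta_\tau\supset\scripta_\sigma$, $\scriptb_\tau\supset\scriptb_\sigma$ to inherit the lower bounds $\min(t|\scripta_\tau|,(1-t)|\scriptb_\tau|)\ge C_0\delta^{1/2}\eta^{-1}$, and then apply Proposition~\ref{prop:nearlyintervals}. The extra ``two-parameter pigeonhole'' discussion is unnecessary (a single averaging step suffices, with monotonicity handling everything else), but it does no harm.
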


\begin{proof}
\[ \eta^{-1}\int_{\sigma-\eta}^\sigma \phi(s)\,ds
\le \eta^{-1}\int_0^{\min(\norm{A}_\infty,\norm{B}_\infty)} \phi(s)\,ds = \eta^{-1}O(\delta^{1/2}), \]
so there exists $s\in[\sigma-\eta,\sigma]$ such that $\phi(s)=O(\delta^{1/2}\eta^{-1})$, which is to say,
\[|t\scripta_s+(1-t)\scriptb_s|<t|\scripta_s|+(1-t)|\scriptb_s|+ O(\delta^{1/2}\eta^{-1}).\]
Since $s\le\sigma$, $\scripta_s\supset \scripta_\sigma$ and therefore 
\[t|\scripta_s|\ge t|\scripta_\sigma|\ge C_0\delta^{1/2}\eta^{-1}.\]
Likewise $(1-t)|\scriptb_s|\ge C_0\delta^{1/2}\eta^{-1}$.
Thus if $C_0$ is sufficiently large, then 
\[|t\scripta_s+(1-t)\scriptb_s|<t|\scripta_s|+(1-t)|\scriptb_s|+\min(|t\scripta_s|,|(1-t)\scriptb_s|);\]
therefore the hypothesis of Proposition~\ref{prop:nearlyintervals} is satisfied. 
That result guarantees the existence of the required intervals $I,J$.
\end{proof}

If $\sigma$ is small then $|\scripta_\sigma|$ cannot be small, for 
\[1-\delta \le |A|=\int_0^{\norm{A}_\infty}|\scripta_s|\,ds
\le M\sigma + \int_\sigma^{1+O(\delta^{1/2})} |\scripta_\sigma|\,ds
\le  M\sigma+2|\scripta_\sigma|\] 
for all sufficiently small $\delta$.
Thus $2|\scripta_\sigma|\ge 2-M\sigma$. If $\sigma \le \tfrac12 M^{-1}$ and $\delta$ is sufficiently
small, it follows that $|\scripta_\sigma|\ge\tfrac14$.

Therefore we may apply Lemma~\ref{lemma:gotintervals} with
$\eta=\delta^{1/6}$ to conclude the following.
\begin{lemma} \label{lemma:gotintervals2}
Let $A,B,t$ satisfy the hypotheses \eqref{hypos} and the normalization \eqref{eq:verticalsupsnormalized}. 
There exist $s_0\in[\delta^{1/6},2\delta^{1/6}]$
and compact intervals $\scripti,\scriptj\subset\reals^1$ such that
\begin{equation}\label{bound23}
\begin{alignedat}{2}
&\scripta_{s_0}\subset \scripti,\qquad &&|\scripti|<|\scripta_{s_0}|+O(\delta^{1/3}),
\\
&\scriptb_{s_0}\subset \scriptj, &&|\scriptj|<|\scriptb_{s_0}|+O(\delta^{1/3}).
\end{alignedat}
\end{equation}
Moreover,
$|A_y|\ge s_0$ for both endpoints $y$ of $\scripti$,
and likewise
$|B_y|\ge s_0$ for both endpoints $y$ of $\scriptj$.
Finally,
\begin{equation}\label{bound16}
|A\cap \pi^{-1}(\reals\setminus\scripti)| +|B\cap \pi^{-1}(\reals\setminus\scriptj)| =O(\delta^{1/6}).
\end{equation}
\end{lemma}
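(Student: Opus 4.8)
The plan is to apply Lemma~\ref{lemma:gotintervals} with the specific choices $\eta=\delta^{1/6}$ and $\sigma=2\delta^{1/6}$, and then to tighten the intervals it produces. First I would verify the hypotheses of Lemma~\ref{lemma:gotintervals} for these choices, assuming $\delta$ small. The chain $0<2\eta\le\sigma<\min(\norm{A}_\infty,\norm{B}_\infty)$ holds because $2\eta=\sigma=2\delta^{1/6}$ while, by the normalization \eqref{eq:verticalsupsnormalized}, $\min(\norm{A}_\infty,\norm{B}_\infty)=1+O(\delta^{1/2})>2\delta^{1/6}$ once $\delta$ is small. For the size hypothesis $\min(t|\scripta_\sigma|,(1-t)|\scriptb_\sigma|)\ge C_0\delta^{1/2}\eta^{-1}=C_0\delta^{1/3}$, I would invoke the elementary lower bound recorded in the display just before the statement: since $\sigma=2\delta^{1/6}\le\tfrac12 M^{-1}$ for small $\delta$, one gets $|\scripta_\sigma|\ge\tfrac14$ and likewise $|\scriptb_\sigma|\ge\tfrac14$; as $t$ ranges over the fixed compact set $\Lambda\subset(0,1)$, $\min(t,1-t)$ is bounded below by a positive constant, so the left-hand side is bounded below by a positive constant and hence exceeds $C_0\delta^{1/3}$ for $\delta$ small.

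Lemma~\ref{lemma:gotintervals} then yields $s_0:=\tau\in[\sigma-\eta,\sigma]=[\delta^{1/6},2\delta^{1/6}]$ together with intervals $I,J\subset\reals$ satisfying $\scripta_{s_0}\subset I$ with $|I|<|\scripta_{s_0}|+O(\delta^{1/2}\eta^{-1})=|\scripta_{s_0}|+O(\delta^{1/3})$, and $\scriptb_{s_0}\subset J$ with the analogous bound. To get the endpoint condition I would replace $I$ by the smallest closed interval containing $\scripta_{s_0}$, namely $\scripti:=[\inf\scripta_{s_0},\,\sup\scripta_{s_0}]$, and define $\scriptj$ from $\scriptb_{s_0}$ in the same way. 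These are well-defined compact intervals: $\scripta_{s_0}$ is nonempty because $s_0<\norm{A}_\infty$, and bounded because it is contained in $\pi(A)$. Since $\scripta_{s_0}\subset\scripti\subset I$, we keep $|\scripti|\le|I|<|\scripta_{s_0}|+O(\delta^{1/3})$, and the same for $\scriptj$. For each endpoint $y$ of $\scripti$ there is a sequence in $\scripta_{s_0}$ converging to $y$, so upper semicontinuity of $x\mapsto|A_x|$ forces $|A_y|\ge s_0$; likewise for the endpoints of $\scriptj$.

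It remains to establish \eqref{bound16}. I would write $|A\cap\pi^{-1}(\reals\setminus\scripti)|=\int_{\reals\setminus\scripti}|A_x|\,dx$ by Fubini. For $x\notin\scripti$ we have $x\notin\scripta_{s_0}$, hence $|A_x|\le s_0$; moreover $|A_x|=0$ for $x\notin\pi(A)$. Therefore the integral is at most $s_0\,|\pi(A)|\le 2\delta^{1/6}M=O(\delta^{1/6})$ by Lemma~\ref{lemma:piAbound}, and the identical bound holds for $B$; summing gives \eqref{bound16}. The one genuinely delicate point in the argument is the lower bound $\min(|\scripta_\sigma|,|\scriptb_\sigma|)\gtrsim 1$ needed to legitimately invoke Lemma~\ref{lemma:gotintervals}, but this is exactly the computation carried out in the paragraph preceding the lemma; everything else is bookkeeping with the exponents ($\delta^{1/2}\eta^{-1}=\delta^{1/3}$ and $s_0\le 2\delta^{1/6}$) and the routine upper-semicontinuity argument.
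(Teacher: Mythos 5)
Your proof is correct and follows essentially the same route as the paper: apply Lemma~\ref{lemma:gotintervals} with $\eta=\delta^{1/6}$ (and $\sigma=2\delta^{1/6}$), using the lower bound $|\scripta_\sigma|,|\scriptb_\sigma|\ge\tfrac14$ from the preceding paragraph to verify its hypotheses; shrink the resulting intervals to the convex hulls of $\scripta_{s_0},\scriptb_{s_0}$; get the endpoint condition from upper semicontinuity of $x\mapsto|A_x|$ (which the paper phrases via compactness of $A$, an equivalent argument); and establish \eqref{bound16} by bounding $|A_x|\le s_0$ off $\scripti$ and integrating over $\pi(A)$, whose measure is $\le M$ by Lemma~\ref{lemma:piAbound}. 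No gaps.
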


\begin{proof}
The existence of $\scripti,\scriptj$ satisfying \eqref{bound23} has already been shown.
These conclusions remain valid if $\scripti$ is replaced by its intersection with the
compact interval $[\inf(\scripta_{s_0}),\sup(\scripta_{s_0})]$. 
If $(y_\nu)$ is a sequence of points of $\scripta_{s_0}$ which converges to 
$y=\sup(\scripta_{s_0})$, then from the compactness of $A$ it follows that 
\[|A_y|\ge\limsup_{\nu\to\infty} |A_{y_\nu}|\ge s_0.\] 
Likewise $|A_y|\ge s_0$ for $y=\inf(\scripta_{s_0})$.
The same reasoning applies to $\scriptb_{s_0}$.

Only the final conclusion remains to be verified.
For $x\in\reals \setminus \scripta_{s_0}$, $|A_x|\le s_0$. Therefore
\begin{equation} 
|A\cap \pi^{-1}(\reals\setminus\scripti)| \le \int_{\pi(A)} s_0\,dx \le Ms_0\le 2M\delta^{1/6}.
\end{equation}
The same reasoning applies to $B\cap \pi^{-1}(\reals\setminus\scriptj)$.
\end{proof}

\section{Step Four: Near equality of horizontal projections}
By independently translating the sets $A,B$ with respect to the $\reals^1$ coordinate
we may assume without loss of generality henceforth that
\[\scripti=[0,a] \text{ and } \scriptj=[0,b] \text{ for some $a,b\in[0,2M]$.}\]
$a,b$ are also bounded below, since $|\scripta_{s_0}|,|\scriptb_{s_0}|$ are bounded below.
We assume henceforth that $A,B$ have been translated so that these conclusions hold.  
This is merely a matter of notational convenience, since it does not affect the measure
of $tA+(1-t)B$.

Now for the first time we assume that $A,B\subset\reals^2=\reals\times\reals$.
\begin{lemma} Let $d=2$. Then $a-b=O(\delta^{1/12})$. \end{lemma}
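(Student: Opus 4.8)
The plan is to exploit the one-dimensional structure established in Step Three, where we have found a threshold height $s_0\in[\delta^{1/6},2\delta^{1/6}]$ at which the superlevel sets $\scripta_{s_0}\subset[0,a]$ and $\scriptb_{s_0}\subset[0,b]$ nearly fill intervals, and combine it with a lower bound on the vertical fibers over these intervals. First I would estimate $|A|$ and $|B|$ in terms of $a$ and $b$. On the one hand, $|A|=1+O(\delta)$, and by \eqref{bound16} almost all of $A$ (up to $O(\delta^{1/6})$) sits over $\scripti=[0,a]$, where by Lemma~\ref{lemma:piAbound} the fibers have $|A_x|\le\norm{A}_\infty=1+O(\delta^{1/2})$. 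Hence $a\ge |A|-O(\delta^{1/6})\ge 1-O(\delta^{1/6})$, and similarly $b\ge 1-O(\delta^{1/6})$; both are also $\le 2M$. So $a,b$ are uniformly bounded above and below.

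The heart of the matter is to compare $|tA+(1-t)B|$ with the one-dimensional sumset of the projections. Since $A$ contains a fiber $\set{x_0}\times E$ with $|E|=1+O(\delta^{1/2})$ (because $\norm{A}_\infty$ is nearly $1$), the sumset $tA+(1-t)B$ contains $(\set{tx_0}\times tE)+(1-t)(B\cap\pi^{-1}(J'))$ for any subinterval $J'$ of $\pi(B)$, which projects onto $tx_0+(1-t)J'$ and has $(d-1)$-dimensional fibers of measure $\ge t|E|\cdot$(lower bound for fibers of $B$). The cleaner route is Fubini: for each $x$, $S_x\supset tA_{x'}+(1-t)B_{x''}$ whenever $tx'+(1-t)x''=x$, so choosing $x'=x_0$ fixed we get $|S_x|\ge |tE+(1-t)B_{(x-tx_0)/(1-t)}|\ge t|E|+(1-t)|B_{(x-tx_0)/(1-t)}|$ by one-dimensional Brunn–Minkowski, for every $x$ in $tx_0+(1-t)\pi(B)$. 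Integrating, $|S|\ge t|E|\cdot(1-t)|\pi(B)| + (1-t)^2\cdot$(something), but this gives products rather than the sharp sum. Instead I would argue symmetrically in the two factors: using that $B$ too has a nearly-full fiber, one obtains $|tA+(1-t)B|\ge t|\pi(A)|+(1-t)|\pi(B)| - o_\delta(1)$ by applying the vertical near-maximality at \emph{both} a fiber of $A$ and a fiber of $B$ and using that the sumset contains $\set{tx_0}\times tE_A + (1-t)B$ on one side and $tA + \set{(1-t)x_1}\times(1-t)E_B$ on the other, these being essentially disjoint in the vertical projections once the horizontal pieces are split at the relevant points — exactly the localization device of Lemma~\ref{lemma:pre2Dlocalization}. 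This yields $t a+(1-t)b\le ta_{\mathrm{proj}}+(1-t)b_{\mathrm{proj}}\le |tA+(1-t)B| + o_\delta(1)<1+o_\delta(1)$, while the lower bounds $a,b\ge 1-O(\delta^{1/6})$ force both $a$ and $b$ to be $1+o_\delta(1)$, hence $a-b=o_\delta(1)$.

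To upgrade the conclusion to the stated rate $O(\delta^{1/12})$, I would track the errors quantitatively. The projection $\pi(A)$ and the interval $\scripti$ differ by $O(\delta^{1/3})$ in measure by \eqref{bound23}, the mass of $A$ outside $\scripti$ is $O(\delta^{1/6})$ by \eqref{bound16}, the vertical normalization contributes $O(\delta^{1/2})$, and the sumset hypothesis contributes $\delta$; the dominant error is $O(\delta^{1/6})$. Combining $a\ge 1-O(\delta^{1/6})$, $b\ge 1-O(\delta^{1/6})$, and $ta+(1-t)b\le 1+O(\delta^{1/6})$ gives $|a-1|,|b-1|=O(\delta^{1/6})$, hence $|a-b|=O(\delta^{1/6})$, which is even stronger than claimed; the weaker exponent $1/12$ in the statement presumably leaves room for a less careful bookkeeping of the lower bound on the fibers of $B$, where one may only control $|B_x|\ge s_0\sim\delta^{1/6}$ on $\scriptj$ rather than having a genuinely full fiber everywhere, costing a square root somewhere. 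The main obstacle is precisely this: ensuring the lower bound on $|tA+(1-t)B|$ in terms of $ta+(1-t)b$ is genuinely additive (not a product of projections), which requires invoking the near-maximal vertical fibers of \emph{both} $A$ and $B$ simultaneously and checking the relevant sumset pieces have disjoint vertical projections — the same mechanism as in the proof of Lemma~\ref{lemma:pre2Dlocalization}, now applied in the horizontal variable with $d=2$ so that the fibers are one-dimensional and the one-dimensional Brunn–Minkowski inequality is additive.
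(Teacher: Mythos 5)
Your lower bounds $a,b\ge 1-O(\delta^{1/6})$ are correct (integrate $|A_x|\le\norm{A}_\infty$ over $\scripti$ and use \eqref{bound16}), and your overall strategy---lower-bound $a,b$ near $1$ and upper-bound a convex combination $ta+(1-t)b$ by $1+o_\delta(1)$---is a reasonable and, if executed, even stronger route than the paper's.  But the crucial step, the claimed lower bound $|tA+(1-t)B|\ge t|\pi(A)|+(1-t)|\pi(B)|-o_\delta(1)$, is not established by the mechanism you describe.  Picking a near-maximal fiber $\{x_0\}\times E_A$ of $A$ and $\{x_1\}\times E_B$ of $B$ and arguing that $(\{tx_0\}\times tE_A)+(1-t)B$ and $tA+(\{(1-t)x_1\}\times(1-t)E_B)$ have disjoint horizontal projections requires $x_0$ to sit at the right end of $\pi(A)$ and $x_1$ at the left end of $\pi(B)$ (or vice versa); nothing in Steps 1--3 places the maxima at the endpoints.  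At a generic $x_0$ interior to $\scripti$ the two pieces overlap and the disjointness argument, which is essential to Lemma~\ref{lemma:pre2Dlocalization}, simply fails.  Moreover, even in the favorable endpoint configuration, a naive count of the fiber measures in each piece gives a bound of the shape $t(1-t)(|\pi(A)|+|\pi(B)|)$, carrying a loss of a factor $t(1-t)$; recovering the genuinely additive form requires applying one-dimensional Brunn--Minkowski to \emph{every} fiber and integrating carefully, which your sketch does not do.

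The paper sidesteps both difficulties with a device you do not use: the linear diagonal parameterization $x\mapsto(x'(x),x''(x))=((a/c)\,x,(b/c)\,x)$ of the interval $[0,c]$, $c=ta+(1-t)b$, together with the fiber-wise one-dimensional Brunn--Minkowski inequality $|S_x|\ge t|A_{x'(x)}|+(1-t)|B_{x''(x)}|$.  Integrating over $x\in[0,c]$ and changing variables produces, up to $O(\delta^{1/6})$ errors coming from \eqref{bound16} and \eqref{bound23}, the inequality $t\,(c/a)+(1-t)\,(c/b)\le 1+O(\delta^{1/6})$.  This does not directly bound $ta+(1-t)b$; rather $t(c/a)+(1-t)(c/b)=1+t(1-t)(a/b-1)^2(b/a)$, so the estimate is genuinely second-order in $a/b-1$, and extracting the conclusion requires taking a square root, giving $a/b-1=O(\delta^{1/12})$ and hence $a-b=O(\delta^{1/12})$.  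Your speculation that the exponent $1/12$ comes from "less careful bookkeeping" of the fibers is not accurate: it is forced by this square root, and the paper's argument does not yield $a,b=1+o_\delta(1)$ at all, only the ratio $a/b$.  To push your route to completion and actually obtain the stronger $O(\delta^{1/6})$, you would need a correct proof of the sumset lower bound that does not depend on where the maximal fibers sit; as written, that is the gap.
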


\begin{proof}
Continue to let $S=tA+(1-t)B$, and set \[c=ta+(1-t)b.\]
For $x\in [0,c]=t[0,a]+(1-t)[0,b]$, write
\begin{equation} x = tx'+(1-t)x'' \text{ where } x' = x'(x)=\frac{a}{c}x 
\text{ and }  x''=x''(x)=\frac{b}{c}x.  \end{equation}
Thus $x'\in [0,a]$ and $x''\in [0,b]$.

Let \[\scriptd=\set{x\in [0,c]: |A_{x'}|>0 \text{ and } |B_{x''}|>0}.\]
Since
\[|\set{x\in[0,a]: |A_x|=0}|\le |\set{x\in[0,a]: |A_x|\le s_0}| =|\scripti\setminus \scripta_{s_0}| =O(\delta^{1/3})\]
and likewise
$|\set{x\in[0,b]: |B_x|=0}|=O(\delta^{1/3})$,
\begin{equation} |[0,c]\setminus\scriptd|=O(\delta^{1/3}).\end{equation}
Since \[c=ta+(1-t)b\le t2M+(1-t)2M\le 2M\] and 
\[\sup_{x'}|A_{x'}|,\sup_{x''}|B_{x''}|\le 1+O(\delta^{1/2}) =O(1),\] 
it follows that
\[\int_{[0,c]\setminus\scriptd} (|A_{x'}|+|B_{x''}|)\,dx 
\le O(1)\big|[0,c]\setminus\scriptd\big|=O(\delta^{1/3}).\]

Now $S_x,A_{x'},B_{x''}\subset\reals^1$ since $d=2$,
and $tA_{x'}+(1-t)B_{x''}\subset S_x$. Therefore
by the one-dimensional Brunn-Minkowski inequality, for any $x\in\scriptd$,
\begin{equation}|S_x|\ge t|A_{x'(x)}|+(1-t)|B_{x''(x)}|.\end{equation}

Integrating this inequality with respect to $x\in\scriptd$ gives
\begin{align*}
1+\delta>|S|&=\int_{\reals^1}|S_x|\,dx
\\ &\ge \int_{\scriptd} \big( t |A_{x'(x)}| + (1-t) |B_{x''(x)}|\big)\,dx
\\ &\ge \int_{0}^{c} \big( t |A_{x'(x)}| + (1-t) |B_{x''(x)}|\big)\,dx\ -O(\delta^{1/3}).
\end{align*}
Split the integral as a sum of two integrals, and change variables in each to obtain
\begin{align*}
1+\delta &>tca^{-1} \int_0^a |A_r|\,dr +(1-t)cb^{-1}\int_0^b |B_r|\,dr -O(\delta^{1/3})
\\ &\ge tca^{-1} |A| +(1-t)cb^{-1} |B| -O(\delta^{1/3})
\\ & \qquad -tca^{-1}|A\setminus\pi^{-1}([0,a])| -(1-t)cb^{-1}|B\setminus\pi^{-1}([0,b])|
\\
&\ge tca^{-1}+(1-t)cb^{-1}-O(\delta^{1/3})-O(\delta^{1/6})(ca^{-1}+cb^{-1});
\end{align*}
the second line followed from \eqref{bound16}.
We have shown that
\begin{equation*} tca^{-1}+(1-t)cb^{-1}<1+O(\delta^{1/6})(ca^{-1}+cb^{-1}) \end{equation*}
where $c=ta+(1-t)b$. For $t$ in any fixed compact subset of $(0,1)$,
this inequality forces $a/b=1+O(\delta^{1/12})$.
Since $a,b$ lie in a fixed compact subset of $(0,\infty)$, that is equivalent to $a-b=O(\delta^{1/12})$.
\end{proof}

\section{Step Five: Near equality for truncated sets} 

Assume with no loss of generality that $a\ge b$.
Introduce the truncated sets 
\[A^\sharp=A\cap ([0,a]\times\reals) \text{ and }B^\sharp=B\cap ([0,a]\times\reals).\]
These sets are still compact. 

\begin{lemma}
\begin{equation} \label{happilysharp}
\Big|\,|tA^\sharp+(1-t)B^\sharp| - t|A^\sharp|-(1-t)|B^\sharp|\,\Big| =O(\delta^{1/3}).
\end{equation}
\end{lemma}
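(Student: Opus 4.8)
The plan is to prove the two one–sided estimates separately, and to combine them through the arithmetic–geometric mean inequality. The lower bound is essentially free. Since $A^\sharp,B^\sharp$ are nonempty compact subsets of $\reals^2$, \eqref{eq:BMmult} gives $|tA^\sharp+(1-t)B^\sharp|\ge |A^\sharp|^t|B^\sharp|^{1-t}$. By \eqref{bound16}, $|A\setminus A^\sharp|=|A\cap\pi^{-1}(\reals\setminus\scripti)|=O(\delta^{1/6})$; and because $\scriptj\subseteq\scripti$ (as $a\ge b$), $|B\setminus B^\sharp|\le |B\cap\pi^{-1}(\reals\setminus\scriptj)|=O(\delta^{1/6})$ as well. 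Together with $|A|,|B|=1+O(\delta)$ this forces $|A^\sharp|=1+O(\delta^{1/6})$ and $|B^\sharp|=1+O(\delta^{1/6})$. The elementary bound $0\le tp+(1-t)q-p^tq^{1-t}\le C(p-q)^2$, valid for $p,q$ in a fixed compact subset of $(0,\infty)$, then yields $|A^\sharp|^t|B^\sharp|^{1-t}=t|A^\sharp|+(1-t)|B^\sharp|-O\big((|A^\sharp|-|B^\sharp|)^2\big)=t|A^\sharp|+(1-t)|B^\sharp|-O(\delta^{1/3})$, and in particular $|tA^\sharp+(1-t)B^\sharp|\ge t|A^\sharp|+(1-t)|B^\sharp|-O(\delta^{1/3})$.

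For the reverse inequality I would apply the localization principle of Lemma~\ref{lemma:localization} twice: once to truncate $B$ down to the strip $\pi^{-1}(\scripti)$ (keeping $A$), obtaining a pair $(A,B^\sharp)$ with $|tA+(1-t)B^\sharp|<|A|^t|B^\sharp|^{1-t}+O(\delta)+O(|B\setminus B^\sharp|^2)$, and then to truncate $A$ down to $\pi^{-1}(\scripti)$ in that pair. Since $|B\setminus B^\sharp|^2$ and $|A\setminus A^\sharp|^2$ are $O\big((\delta^{1/6})^2\big)=O(\delta^{1/3})$, and since the near-equality defect carried from the first application into the second is still only $O(\delta)+O(\delta^{1/3})=O(\delta^{1/3})$, the outcome is $|tA^\sharp+(1-t)B^\sharp|<|A^\sharp|^t|B^\sharp|^{1-t}+O(\delta^{1/3})\le t|A^\sharp|+(1-t)|B^\sharp|+O(\delta^{1/3})$. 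Concretely, what Lemma~\ref{lemma:localization} (via the mechanism of Lemma~\ref{lemma:pre2Dlocalization}) is doing here is exhibiting, inside $S:=tA+(1-t)B$, a set disjoint from $tA^\sharp+(1-t)B^\sharp$ of measure at least $t|A\setminus A^\sharp|+(1-t)|B\setminus B^\sharp|-O(\delta^{1/3})$; since $tA^\sharp+(1-t)B^\sharp\subseteq S$ is supported in $\pi^{-1}(\scripti)$, this extra mass can be taken in $\pi^{-1}(\reals\setminus[0,a])$, built from pieces such as $t\big(A\cap\pi^{-1}((a^\ast,\infty))\big)+(1-t)(\{b\}\times B_b)$ with $a^\ast=(a-(1-t)b)/t=a+O(\delta^{1/12})$ (using $a-b=O(\delta^{1/12})$) and $t(\{a\}\times A_a)+(1-t)\big(B\cap\pi^{-1}((a,\infty))\big)$, with the left side of $0$ handled symmetrically using $A_0,B_0$. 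The factors $s_0$ that would be lost by passing to these low–slice tails are recovered because the slices of $A,B$ outside $\scripti,\scriptj$ are $\le s_0$ (so the $\pi$-extents of the tails are at least their masses divided by $s_0$) while $|A_0|,|A_a|,|B_0|,|B_b|\ge s_0$; this is exactly what makes the hypotheses of Lemma~\ref{lemma:localization} hold in the present normalization. Combining the two bounds with $t|A^\sharp|+(1-t)|B^\sharp|=|A^\sharp|^t|B^\sharp|^{1-t}+O(\delta^{1/3})$ gives \eqref{happilysharp}.

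The step I expect to be the main obstacle is precisely the verification just sketched: the contributions to the excess of $S$ coming from the tail of $A$ and from the tail of $B$ both occupy overlapping $\pi$-ranges in $\pi^{-1}((a,\infty))$ (and symmetrically to the left), so one must check that they are essentially disjoint — equivalently, that the extreme-slice hypotheses of Lemma~\ref{lemma:localization} are met — and that all the resulting error terms really are $O(\delta^{1/3})$. A crude accounting of $|A\setminus A^\sharp|$ and $|B\setminus B^\sharp|$ alone yields only $O(\delta^{1/6})$, so the ``long and thin'' structure of the tails together with the near-coincidence $a-b=O(\delta^{1/12})$ must be exploited efficiently. Everything else is routine bookkeeping with the powers of $\delta$ furnished by \eqref{bound16}, \eqref{bound23}, and the normalizations \eqref{hypos} and \eqref{eq:verticalsupsnormalized}.
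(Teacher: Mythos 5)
Your plan follows the paper's: invoke Lemma~\ref{lemma:localization}, exploit the squared error term $O(|B\setminus B^\sharp|^2)$, and finish with the arithmetic--geometric expansion via Lemma~\ref{lemma:goodratio}. You also correctly sense that the real difficulty is accounting for the tail of $A$ as well as that of $B$ when both sets are truncated to $[0,a]\times\reals$. But there are two points where the sketch, read on its own terms, does not deliver the stated bound.

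First, your two successive applications of Lemma~\ref{lemma:localization} do not verify the lemma's hypotheses. In the first pass, $(A,B)\to(A,B^\sharp)$, the hypothesis requires the \emph{extreme} slices $A_{\sup\pi(A)}$, $A_{\inf\pi(A)}$ of $A$ (not $A_a$, $A_0$) to dominate the tails of $B$; since $\pi(A)$ may strictly contain $[0,a]$, those extreme slices need not have measure $\ge s_0$, and there is no reason they dominate $\sup_{x>a}|B_x|$. In the second pass, $(B^\sharp,A)\to(B^\sharp,A^\sharp)$, the rightmost nonempty slice of $B^\sharp$ lies at some $\beta\in(b,a]$ where $|B_\beta|\le s_0$, so again the dominance hypothesis may fail. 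The construction of Lemma~\ref{lemma:gotintervals2} guarantees thick slices at the endpoints of $\scripti,\scriptj$; it does not control the extreme slices of the sets themselves, which is what Lemma~\ref{lemma:pre2Dlocalization} actually needs. You try to repair this with an explicit decomposition into pieces such as $t\big(A\cap\pi^{-1}((a^\ast,\infty))\big)+(1-t)(\{b\}\times B_b)$ and $t(\{a\}\times A_a)+(1-t)\big(B\cap\pi^{-1}((a,\infty))\big)$; both lie in $\pi^{-1}((a,\infty))$ and hence are disjoint from $tA^\sharp+(1-t)B^\sharp$, but their mutual disjointness in the fiber direction is not established, and you rightly flag this as open.

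Second, the quantitative loss is misestimated. Your explicit pieces only capture the $A$-tail beyond $a^\ast=(a-(1-t)b)/t$; the slices of $A$ over $(a,a^\ast]$ are lost. Since $|A_x|\le s_0=O(\delta^{1/6})$ there and $a^\ast-a=(1-t)(a-b)/t=O(\delta^{1/12})$, that lost mass is $O(s_0\cdot(a^\ast-a))=O(\delta^{1/4})$, not $O(\delta^{1/3})$ as you claim. So, even granting the disjointness, the argument as written yields only
\[
\Big|\,|tA^\sharp+(1-t)B^\sharp|-t|A^\sharp|-(1-t)|B^\sharp|\,\Big|=O(\delta^{1/4}),
\]
which is strictly weaker than \eqref{happilysharp}. (The paper's remark notes the exponents are not definitive, and a weaker power could be propagated, but the point is that your sketch does not prove the lemma as stated.) In short: right tool, right intuition about where the squared error comes from, and an honest flag of the obstacle, but the hypotheses of the localization lemma are not actually met in either application you make, and the loss you would incur in the repair you propose is $\delta^{1/4}$, not $\delta^{1/3}$.
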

This conclusion may look odd, since $A^\sharp,B^\sharp$ are subsets of $\reals^2$, 
yet this is the type of expression which appears in the Brunn-Minkowski inequality for $\reals^1$. 
But since $A^\sharp,B^\sharp$ have nearly equal measures, $t|A^\sharp|+(1-t)|B^\sharp|$
is nearly equal to $(t|A^\sharp|^{1/2}+(1-t)|B^\sharp|^{1/2})^2$.

\begin{proof}
$A^\sharp,B^\sharp$ satisfy
$|A^\sharp|,|B^\sharp|=1+O(\delta^{1/6})$ by \eqref{bound16}.
Moreover, by the construction of $a$ and the localization lemma, Lemma~\ref{lemma:localization}, 
\begin{equation}\begin{aligned} 
|t A^\sharp+(1-t) B^\sharp|
& < |A^\sharp|^{t}|B^\sharp|^{1-t}+O(\delta) +O(|B\setminus B^\sharp|^2)
\\ &= |A^\sharp|^{t}|B^\sharp|^{1-t}+O(\delta) +O(\delta^{1/6})^2
\\ &=|A^\sharp|^t|B^\sharp|^{1-t}+O(\delta^{1/3}).
\label{happilylocalized} 
\end{aligned} \end{equation} 

For any $\alpha,\beta>0$,
\[ \alpha^t\beta^{1-t} = t\alpha+(1-t)\beta + O\Big((\frac\alpha\beta-1 )^2\Big) \max(\alpha,\beta).  \]
According to Lemma~\ref{lemma:goodratio}, the inequality 
$ |t A^\sharp+(1-t) B^\sharp|< |A^\sharp|^t|B^\sharp|^{1-t}+O(\delta^{1/3})$ implies that
\begin{equation} 
\frac{|A^\sharp|}{|B^\sharp|} = 1+O(\delta^{1/6}).
\end{equation}
Therefore
\begin{align*}
|A^\sharp|^t|B^\sharp|^{1-t} &= t|A^\sharp|+(1-t)|B^\sharp| + O\big((\delta^{1/6})^2\big) 
\max(|A^\sharp|,|B^\sharp|)) 
\\ &= t|A^\sharp|+(1-t)|B^\sharp| + O(\delta^{1/3}).  \end{align*}
\end{proof}

One can use the bounds
$|A\setminus A^\sharp|=O(\delta^{1/6})$ and $|B\setminus B^\sharp|=O(\delta^{1/6})$
more directly, without invoking Lemma~\ref{lemma:localization}, to deduce that 
$\Big|\,|tA^\sharp+(1-t)B^\sharp| - t|A^\sharp|-(1-t)|B^\sharp|\,\Big| =O(\delta^{1/6})$.
However, this power of $\delta$ does not suffice for subsequent steps of the analysis.

\section{Step Six: Vertical structure}

Throughout this section, $c=ta+(1-t)b$ where $\scripti=[0,a]$ and $\scriptj=[0,b]$
satisfy the conclusions of Lemma~\ref{lemma:gotintervals2}.
For $x\in[0,c]$, we systematically write
\begin{equation}
x = tx'+(1-t)x'' \ \text{ where } \ x'=x'(x)=ac^{-1}x \ \text{ and } \ x''=x''(x)=bc^{-1}x.
\end{equation}
We have shown that the quantities $a/c$, $b/c$ are both equal to $1+O(\delta^{1/12})$.

The goal of step 6 is to achieve:
\begin{proposition} \label{prop:intervalsKL}
Assume that the dimension $d$ equals $2$. 
Let $t\in\Lambda$.
Let $A,B\subset\reals^2$  be compact sets
which satisfy the hypotheses \eqref{hypos} and the normalization \eqref{eq:verticalsupsnormalized}, 
and are appropriately translated.
Let $a,b,c>0$ be as defined above.
Then there exists a subset $\omega^\ddagger\subset[0,c]$ satisfying
\begin{equation} |[0,c]\setminus \omega^\ddagger| =O(\delta^{1/12})  \end{equation}
such that for each $x\in\omega^\ddagger$ there exist compact intervals $I_{x'(x)},J_{x''(x)}\subset\reals^{1}$
such that 
\begin{alignat}{2}
&A_{x'(x)}\subset I_{x'(x)},\ \  &&|I_{x'(x)}|\le|A_{x'(x)}|+\delta^{1/4},
\\
&B_{x''(x)}\subset J_{x''(x)}, &&|J_{x''(x)}|\le |B_{x''(x)}|+\delta^{1/4}.
\end{alignat}
\end{proposition}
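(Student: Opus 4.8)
The plan is to establish the conclusion slice by slice, applying the one–dimensional near–equality result Proposition~\ref{prop:nearlyintervals} to each pair $(tA_{x'(x)},(1-t)B_{x''(x)})$, for all $x$ outside a small exceptional set. First I would record the basic inclusion: for $x\in[0,c]$ one has $x'(x)\in[0,a]$ and $x''(x)\in[0,b]\subset[0,a]$, so $A_{x'(x)}=A^\sharp_{x'(x)}$ and $B_{x''(x)}=B^\sharp_{x''(x)}$, and since $x=tx'(x)+(1-t)x''(x)$,
\[ tA_{x'(x)}+(1-t)B_{x''(x)}\subset S^\sharp_x,\qquad S^\sharp:=tA^\sharp+(1-t)B^\sharp. \]
Let $E(x)=|tA_{x'(x)}+(1-t)B_{x''(x)}|-t|A_{x'(x)}|-(1-t)|B_{x''(x)}|$; by Brunn--Minkowski in $\reals^1$, $E(x)\ge 0$ on $\scriptd:=\set{x\in[0,c]:|A_{x'(x)}|>0,\ |B_{x''(x)}|>0}$. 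Proposition~\ref{prop:nearlyintervals}, applied to $tA_{x'(x)}$ and $(1-t)B_{x''(x)}$, will yield intervals exactly when $E(x)$ is small and $E(x)<\min(t|A_{x'(x)}|,(1-t)|B_{x''(x)}|)$, and then the interval lengths exceed the slice measures by at most $E(x)/\min(t,1-t)$.

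Second, I would bound $\int E$ in $L^1$. Integrating the inclusion over $\scriptd$,
\[ \int_{\scriptd}E(x)\,dx\le |S^\sharp|-\int_{\scriptd}\big(t|A_{x'(x)}|+(1-t)|B_{x''(x)}|\big)\,dx. \]
By \eqref{happilysharp}, $|S^\sharp|=t|A^\sharp|+(1-t)|B^\sharp|+O(\delta^{1/3})$. The affine change of variables $r=x'(x)=ac^{-1}x$ (resp.\ $bc^{-1}x$) evaluates $\int_0^c t|A_{x'(x)}|\,dx=(c/a)t|A^\sharp|$ and $\int_0^c(1-t)|B_{x''(x)}|\,dx=(c/b)(1-t)|B^\sharp|$ exactly, and $|[0,c]\setminus\scriptd|=O(\delta^{1/3})$ with bounded integrand absorbs the difference between $\scriptd$ and $[0,c]$. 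Since $1-c/a=(1-t)(a-b)/a$ and $1-c/b=-t(a-b)/b$, the leading terms collapse:
\[ \int_{\scriptd}E(x)\,dx\le t(1-t)(a-b)\Big(\tfrac{|A^\sharp|}{a}-\tfrac{|B^\sharp|}{b}\Big)+O(\delta^{1/3})
= t(1-t)\Big(\tfrac{(a-b)(|A^\sharp|-|B^\sharp|)}{a}-\tfrac{(a-b)^2|B^\sharp|}{ab}\Big)+O(\delta^{1/3}). \]
Using $a-b=O(\delta^{1/12})$ (Step Four), $|A^\sharp|-|B^\sharp|=O(\delta^{1/6})$ (Step Five), and that the $(a-b)^2$ term is $\le 0$, this gives $\int_{\scriptd}E(x)\,dx=O(\delta^{1/3})$; if the direct estimate falls just short one iterates, using $\int E\ge 0$ to sharpen the control on $(a-b)$ and feeding it back.

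Third, I would extract the pointwise statement. Put
\[ \omega^\ddagger=\set{x\in[0,c]:\ E(x)\le \min(t,1-t)\delta^{1/4},\ |A_{x'(x)}|>s_0,\ |B_{x''(x)}|>s_0}. \]
By Chebyshev, $|\{x:E(x)>\min(t,1-t)\delta^{1/4}\}|\le (\min(t,1-t)\delta^{1/4})^{-1}\int E=O(\delta^{1/12})$. By \eqref{bound23}, since $\scripta_{s_0}\subset\scripti=[0,a]$ and $|\scripti|<|\scripta_{s_0}|+O(\delta^{1/3})$, the set $\{x:|A_{x'(x)}|\le s_0\}$ is the $x'$-preimage of $[0,a]\setminus\scripta_{s_0}$ and has measure $O(\delta^{1/3})$; likewise for $B$; and $[0,c]\setminus\scriptd$ has measure $O(\delta^{1/3})$. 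Hence $|[0,c]\setminus\omega^\ddagger|=O(\delta^{1/12})$. For $x\in\omega^\ddagger$, since $s_0\ge\delta^{1/6}>\delta^{1/4}$ we get $E(x)\le\min(t,1-t)\delta^{1/4}<t|A_{x'(x)}|$ and $E(x)<(1-t)|B_{x''(x)}|$, so Proposition~\ref{prop:nearlyintervals} applies to $tA_{x'(x)},(1-t)B_{x''(x)}$ and produces an interval containing $tA_{x'(x)}$ of length $<t|A_{x'(x)}|+E(x)\le t|A_{x'(x)}|+t\delta^{1/4}$; rescaling by $t^{-1}$ gives $I_{x'(x)}$ with $|I_{x'(x)}|\le|A_{x'(x)}|+\delta^{1/4}$, and symmetrically $J_{x''(x)}$.

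The main obstacle is the second step: producing an $L^1$ bound on the slice–deficit $E$ with a power of $\delta$ large enough that Chebyshev, at the threshold $\delta^{1/4}$ forced by the target error, leaves an exceptional set of measure only $O(\delta^{1/12})$. The naive bound (of order $\delta^{1/6}$) is too weak, so one must extract the exact cancellation produced by the change of variables $r=ac^{-1}x$ together with the near–coincidences $a\approx b$ and $|A^\sharp|\approx|B^\sharp|$, and keep careful track of the several competing powers of $\delta$ — this is precisely where the exponents $\delta^{1/12},\delta^{1/6},\delta^{1/4},\delta^{1/3}$ appearing in the preceding steps interlock. The compactness/approximation passage to general Borel $A,B$ is left for the later part of the argument and plays no role here.
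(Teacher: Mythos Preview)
Your argument follows the paper's closely: define the slice deficit $E(x)$, bound $\int E$ via \eqref{happilysharp} together with the affine changes of variable $r=ac^{-1}x$ and $r=bc^{-1}x$, apply Chebyshev at threshold $\asymp\delta^{1/4}$, and invoke Proposition~\ref{prop:nearlyintervals} slice by slice. The paper organizes the algebra a little differently---it uses the convexity inequality $t\,(c/a)+(1-t)\,(c/b)\ge 1$ in place of your explicit expansion---but the content is the same, and your definition of $\omega^\ddagger$ and the verification that $s_0\ge\delta^{1/6}>\delta^{1/4}$ allows Proposition~\ref{prop:nearlyintervals} to be applied are correct.

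There is one arithmetic slip in your second step. From $a-b=O(\delta^{1/12})$ and $|A^\sharp|-|B^\sharp|=O(\delta^{1/6})$ one gets $(a-b)\big(|A^\sharp|-|B^\sharp|\big)=O(\delta^{1/4})$, not $O(\delta^{1/3})$; with only $\int E=O(\delta^{1/4})$, Chebyshev at level $\delta^{1/4}$ gives an exceptional set of size $O(1)$, which is useless. Your bootstrap suggestion, however, does close the gap and deserves to be written out rather than merely alluded to: combining $\int_{\scriptd}E\ge 0$ with your upper bound gives
\[
\frac{(a-b)^2\,|B^\sharp|}{ab}\ \le\ \frac{(a-b)\big(|A^\sharp|-|B^\sharp|\big)}{a}+O(\delta^{1/3}),
\]
and since $a,b,|B^\sharp|\asymp 1$, $a-b\ge 0$, and $|A^\sharp|-|B^\sharp|=O(\delta^{1/6})$, this quadratic inequality forces $a-b=O(\delta^{1/6})$, sharpening the $O(\delta^{1/12})$ obtained in Step~Four. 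Feeding this back, the cross term becomes $O(\delta^{1/6})\cdot O(\delta^{1/6})=O(\delta^{1/3})$, the $(a-b)^2$ term is nonpositive, and hence $\int_{\scriptd}E=O(\delta^{1/3})$ as required. With this amendment your argument is complete.
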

The various exponents which appear in our inequalities should by no means be regarded as definitive.

\begin{proof}
Throughout this proof, $x'\equiv x'(x)$ and $x''\equiv x''(x)$.
Define \[\omega=\set{x\in [0,c]: A_{x'}\ne\emptyset \text{ and } B_{x''}\ne\emptyset}.\]
By \eqref{bound23}, $|[0,c]\setminus\omega|= O(\delta^{1/3})$.
Now
\begin{align*}
|t A^\sharp+(1-t) B^\sharp| 
&\ge \int_{\reals^1} |t A_{x'}+(1-t) B_{x''}|\,dx
\\ &\ge \int_{\omega} |t A_{x'}+(1-t) B_{x''}|\,dx
\\ &= 
\int_{\omega} \Big(|t A_{x'}+(1-t) B_{x''}|-t|A_{x'}|-(1-t)|B_{x''}|\Big)\,dx
\\
&\qquad +ca^{-1}t|A^\sharp|+cb^{-1}(1-t)|B^\sharp| 
-\int_{[0,c]\setminus\omega}(t|A_{x'}|+(1-t)|B_{x''}|)\,dx. 
\end{align*}
Now
\[ \int_{[0,c]\setminus\omega} (t|A_{x'}|+(1-t)|B_{x''}|)\,dx = O(|[0,a]\setminus\omega) = O(\delta^{1/3}). \]
Also since $\frac{a}{c}+\frac{b}{c}= 1$, $\frac{c}{a}=O(1)$, $\frac{c}{b}=O(1)$, 
and $|A^\sharp|-|B^\sharp|=O(\delta^{1/3})$,
\begin{align*}
ca^{-1}t|A^\sharp|+cb^{-1}(1-t)|B^\sharp| 
&\ge ca^{-1}t|A^\sharp|+cb^{-1}(1-t)|A^\sharp| +O(\delta^{1/3})
\\
&\ge |A^\sharp|+O(\delta^{1/3})
\\
&\ge t|A^\sharp|+(1-t)|B^\sharp| + O(\delta^{1/3}).
\end{align*}
It follows that
\begin{equation} \label{eq:anotherintegralbound}
\int_{\omega} \Big(|t A_{x'}+(1-t) B_{x''}|-t|A_{x'}|-(1-t)|B_{x''}|\Big)\,dx =O(\delta^{1/3}).
\end{equation}

By the Brunn-Minkowski inequality applied to $A_{x'},B_{x''}\subset\reals^1$,
\begin{equation} |t A_{x'(x)}+(1-t) B_{x''(x)}|-t|A_{x'(x)}|-(1-t)|B_{x''(x)}|\ \ge 0 \ \ 
\text{ whenever $x\in\omega$.} \end{equation}
Therefore by Chebyshev's inequality and \eqref{eq:anotherintegralbound}, for any $\lambda>0$,
\[|\set{x\in\omega: |tA_{x'(x)}+(1-t)B_{x''(x)}|-t|A_{x'(x)}|-(1-t)|B_{x''(x)}|>\lambda}| 
= O(\lambda^{-1}\delta^{1/3}).\]

The subset $\omega^\dagger\subset\omega$ defined to be
\begin{equation}\label{eq:omegadagger}
\omega^\dagger=\set{x\in[0,c]: |A_{x'(x)}|\ge \delta^{1/6} \text{ and } |B_{x''(x)}|\ge\delta^{1/6}}\end{equation} 
satisfies 
\begin{equation}\big|[0,c]\setminus\omega^\dagger\big| = O(\delta^{1/6})\end{equation}
by \eqref{bound23}, since $s_0\ge \delta^{1/6}$, $|A_y|>s_0$ for $y\in \scripta_{s_0}$,
and $|[0,a]\setminus \scripta_{s_0}|=O(\delta^{1/3})$, with corresponding statements for the set $B$.

Choose $\lambda = \delta^{1/4}$, and define
\begin{equation}\omega^\ddagger = 
\set{x\in \omega^\dagger: |tA_{x'(x)}+(1-t)B_{x''(x)}|-t|A_{x'(x)}|-(1-t)|B_{x''(x)}|\le \delta^{1/4}}.\end{equation}
Then we have shown that
\begin{equation}|[0,c]\setminus\omega^\ddagger|=O(\delta^{1/6}+\delta^{1/12}) = O(\delta^{1/12}).\end{equation}

For each $x\in\omega^\ddagger$, the pair of sets $A_{x'(x)},B_{x''(x)}\subset\reals^{1}$ gives rise to near equality
in the one-dimensional Brunn-Minkowski inequality.    
Moreover, $\min(|A_{x'}|,|B_{x''}|)\ge \delta^{1/6}$, while 
\[|tA_{x'}+(1-t)B_{x''}|-t|A_{x'}|-(1-t)|B_{x''}|\le \delta^{1/4}\ll\delta^{1/6}.\]
Therefore by Proposition~\ref{prop:nearlyintervals}, 
for each $x\in\omega^\ddagger$ there exist intervals $I_{x'},J_{x''}\subset\reals$
such that $A_{x'}\subset I_{x'}$, $B_{x''}\subset J_{x''}$, 
$|I_{x'}|<|A_{x'}|+O(\delta^{1/4})$, and $|J_{x''}|<|B_{x''}|+O(\delta^{1/4})$.
\end{proof}

It is possible to choose the families of intervals $I_x,J_x$ in Proposition~\ref{prop:intervalsKL}
so that the set of all $(x,y)\in [0,a]\times\reals^{1}$ such that 
$ca^{-1}x\in\omega^\ddagger \text{ and } y\in I_x$ is Lebesgue measurable, and likewise 
$\set{(x,y)\in[0,b]\times\reals^{1}: cb^{-1} x\in\omega^\ddagger \text{ and } y\in J_x}$ is measurable.
Since $|A_x|,|B_x|\le 1+O(\delta^{1/2})$ for all $x$, 
\[ |I_{x'}|\le 2 \ \text{ and } \  |J_{x''}|\le 2 \ \text{ for all $x\in\omega^\ddagger$}, \]
provided that $\delta$ is sufficiently small.

\section{Step Seven: Two-dimensional affine structure}

Throughout the remainder of the paper we assume that the dimension is $d=2$.
Define $\omega^\ddagger_A=\set{x'(x): x\in\omega^\ddagger}$
and $\omega^\ddagger_B=\set{x''(x): x\in\omega^\ddagger}$.
For each $x\in\omega^\ddagger_A$ define $\varphi(x)$ to be the center of $I_{x'}$;
for $x\in\omega^\ddagger_B$, $\psi(x)$ is likewise the center of $J_{x}$.
Extend these two functions in some arbitrary manner to obtain measurable functions,
still denoted $\varphi,\psi$, with domains equal to  $[0,a]$ and to $[0,b]$, respectively.  Define 
$\omega^\times$ to be the set of all $(x',x'')\in\omega^\ddagger_A\times\omega^\ddagger_B$  such that either
$tx'+(1-t)x''\notin\omega^\ddagger$, or $tx'+(1-t)x'' \in\omega^\ddagger $ and
\[|\big(t\varphi(x')+(1-t)\psi(x'')\big) - \big(t\varphi(tx'+(1-t)x'')+(1-t)\psi(tx'+(1-t)x'') \big)|\ge 2.\]

In the following discussion, $y\mapsto x'(y)$ and $y\mapsto x''(y)$
are the same mappings $y\mapsto ca^{-1}y$, $y\mapsto cb^{-1}y$ as above,
while $x',x''$ will denote general points in $[0,a]$, $[0,b]$ respectively.

\begin{lemma} $|\omega^\times|=O(\delta^{1/6})$.  \end{lemma}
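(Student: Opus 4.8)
The plan is to estimate the measure of $\omega^\times$ by integrating a slicewise version of the one-dimensional sumset bound over the region where the "center" functions $\varphi,\psi$ fail to be compatible under the affine averaging. First I would observe that for $x'\in\omega^\ddagger_A$, $x''\in\omega^\ddagger_B$ with $x=tx'+(1-t)x''\in\omega^\ddagger$, all three slices $A_{x'}$, $B_{x''}$ and $S_x=(tA^\sharp+(1-t)B^\sharp)_x$ are contained in short intervals: $A_{x'}\subset I_{x'}$ and $B_{x''}\subset J_{x''}$ by Proposition~\ref{prop:intervalsKL}, while $S_x\supset tA_{x'}+(1-t)B_{x''}$, so $S_x$ contains a translate of $tI_{x'}+(1-t)J_{x''}$ up to an error of $O(\delta^{1/4})$ in measure; also $|A_{x'}|,|B_{x''}|\ge\delta^{1/6}$ on $\omega^\ddagger$. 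Since $x\in\omega^\ddagger$, the slice $S_x$ is itself nearly an interval $K_x$ with center, say, $\chi(x):=t\varphi(x)+(1-t)\psi(x)$ (this is how $\varphi,\psi$ were used to build $K$, $L$ in the remark following Proposition~\ref{prop:intervalsKL}), and $|K_x|\le|S_x|+O(\delta^{1/4})\le |A_{x'}|+|B_{x''}|+O(\delta^{1/4})$ since $x\in\omega^\ddagger$.

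The key point is a rigidity statement for near-equality in the one-dimensional inequality: if $U,V\subset\reals$ are contained in intervals of lengths $|U|+O(\eps)$, $|V|+O(\eps)$, and $tU+(1-t)V$ is contained in an interval of length $|U|+|V|+O(\eps)$, and $\min(|U|,|V|)\ge\delta^{1/6}\gg\eps$, then the center of the interval containing $tU+(1-t)V$ must be within $O(\eps/\delta^{1/6})$ of $t(\text{center of }U)+(1-t)(\text{center of }V)$. This is because $tU+(1-t)V$ contains translates of both $t\cdot(U\cap(\text{left half of its bounding interval}))+(1-t)\cdot(\text{left endpoint region of }V)$ and the symmetric construction, forcing its extreme points to be nearly affine combinations of the extreme points of $U$ and $V$; I would prove it by the same left/right truncation trick used in Lemma~\ref{lemma:pre2Dlocalization} and in the proof of the one-dimensional $0/1$-length lemma. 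Applying this with $U=A_{x'}$, $V=B_{x''}$, $\eps=\delta^{1/4}$ gives that whenever $(x',x'')$ lies in the "good" part — i.e. $x=tx'+(1-t)x''\in\omega^\ddagger$ — one has
\[
\big|\big(t\varphi(x')+(1-t)\psi(x'')\big)-\big(t\varphi(x)+(1-t)\psi(x)\big)\big| = O(\delta^{1/4}/\delta^{1/6}) = O(\delta^{1/12}) \ll 2
\]
for $\delta$ small, where I have used that $t\varphi(x)+(1-t)\psi(x)=\chi(x)$ is the center of $S_x$'s bounding interval by construction. Hence such $(x',x'')$ cannot belong to $\omega^\times$: the only way to be in $\omega^\times$ via its second clause is excluded, so $\omega^\times$ is contained in $\set{(x',x''): tx'+(1-t)x''\notin\omega^\ddagger}$.

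It remains to bound the measure of $\set{(x',x'')\in[0,a]\times[0,b]: tx'+(1-t)x''\notin\omega^\ddagger}$. The map $(x',x'')\mapsto tx'+(1-t)x''$ from $[0,a]\times[0,b]$ to $[0,c]$ is a linear submersion with fibers of length $O(1)$ (since $a,b,c$ are all comparable and bounded), so by Fubini the preimage of a set of measure $O(\delta^{1/12})$ has measure $O(\delta^{1/12})$; by Proposition~\ref{prop:intervalsKL}, $|[0,c]\setminus\omega^\ddagger|=O(\delta^{1/12})$, giving $|\omega^\times|=O(\delta^{1/12})$. That is slightly weaker than the asserted $O(\delta^{1/6})$; the gain to $\delta^{1/6}$ presumably comes from a sharper accounting — e.g. restricting attention to the subset where additionally $|A_{x'}|,|B_{x''}|\ge\delta^{1/6}$ is replaced by the stronger lower bound available on most of $\omega^\ddagger$, or from using the better $O(\delta^{1/6})$-bound on $|[0,c]\setminus\omega^\dagger|$ and absorbing the $\omega^\dagger\setminus\omega^\ddagger$ part into the Chebyshev estimate \eqref{eq:anotherintegralbound} directly rather than through the crude $\delta^{1/12}$. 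The main obstacle I anticipate is precisely making the rigidity estimate for one-dimensional near-equality quantitative enough, with the correct power of $\delta$, and checking that the center function $\chi(x)$ of the slice $S_x$ genuinely coincides (up to the allowed error) with $t\varphi(x)+(1-t)\psi(x)$ as built in the construction after Proposition~\ref{prop:intervalsKL}; once that linkage is in hand the measure bound is a routine Fubini argument.
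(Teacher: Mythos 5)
Your proposal founders on the rigidity step, and the failure is structural, not merely a matter of bookkeeping constants. You assert that since $x\in\omega^\ddagger$, the slice $S_x=(tA^\sharp+(1-t)B^\sharp)_x$ is nearly an interval with $|S_x|\le t|A_{x'}|+(1-t)|B_{x''}|+O(\delta^{1/4})$. But the definition of $\omega^\ddagger$ controls only $|tA_{x'(x)}+(1-t)B_{x''(x)}|$, which is the measure of the \emph{subset} $tA_{x'(x)}+(1-t)B_{x''(x)}\subset S_x$, not of $S_x$ itself. Nothing in the construction gives a pointwise upper bound on $|S_x|$ for individual $x\in\omega^\ddagger$: the slice $S_x$ may be considerably larger than $t|A_{x'(x)}|+(1-t)|B_{x''(x)}|$, and when it is, there is plenty of room inside $S_x$ for $tA_{x'}+(1-t)B_{x''}$ (with $tx'+(1-t)x''=x$) to be disjoint from $tA_{x'(x)}+(1-t)B_{x''(x)}$, so that the two affine combinations of centers differ by more than $2$. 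Consequently your conclusion that $\omega^\times\subset\set{(x',x''): tx'+(1-t)x''\notin\omega^\ddagger}$ is simply false, and the rigidity lemma you invoke (whatever its precise statement) cannot be applied, because its hypothesis on the bounding interval of the sumset is not available. Relatedly, there is no ``center function $\chi(x)$ of $S_x$'s bounding interval'' in the construction; $\varphi(x),\psi(x)$ are the centers of $I_{x'(x)},J_{x''(x)}$, not of any interval bounding $S_x$.

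The paper avoids this trap by not attempting a pointwise exclusion. It observes that if the centers differ by more than $2$ then the two intervals $tI_{x'}+(1-t)J_{x''}$ and $tI_{x'(y)}+(1-t)J_{x''(y)}$ are disjoint (each has length $<2$), hence $tA_{x'}+(1-t)B_{x''}$ and $tA_{x'(y)}+(1-t)B_{x''(y)}$ are disjoint subsets of $(tA+(1-t)B)_y$, giving $|(tA+(1-t)B)_y|\ge t|A_{x'(y)}|+(1-t)|B_{x''(y)}|+\delta^{1/6}$. The set of $y\in\omega^\ddagger$ with this property is then controlled by Chebyshev applied to the nonnegative integrand $|(tA+(1-t)B)_y|-t|A_{x'(y)}|-(1-t)|B_{x''(y)}|$, whose integral over $\omega^\ddagger$ is $O(\delta^{1/3})$; this yields a set of $y$ of measure $O(\delta^{1/6})$ and, after lifting to pairs $(x',x'')$ via the bounded fibers of $(x',x'')\mapsto tx'+(1-t)x''$, a contribution of $O(\delta^{1/6})$. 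Your treatment of the first clause of $\omega^\times$ via Fubini (the $E$ part, measure $O(\delta^{1/12})$) matches the paper's and is fine. Note also that the total obtained by the paper's own proof is $O(\delta^{1/12})$, not $O(\delta^{1/6})$ (the $E$ part dominates), and this is the exponent used downstream in Lemma~\ref{lemma:intervalbundles}; the exponent $1/6$ in the lemma's statement appears to be a slip.
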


\begin{proof} 
Let 
\[ E=\set{ (x',x'')\in\omega^\times: tx'+(1-t)x''\notin\omega^\ddagger }. \]
Since $|\omega^\dagger|\asymp 1$
and the mappings $x\mapsto x'$ and $x\mapsto x''$ are affine functions with
derivatives $1+O(\delta^{1/12})\asymp 1$ which map $[0,c]\setminus\omega^\ddagger$
bijectively to $[0,a]\setminus\omega^\ddagger_A$
and to $[0,b]\setminus\omega^\ddagger_B$ respectively,
\[ |E|=O(|[0,a]\setminus\omega^\ddagger_A|) + O(|[0,b]\setminus\omega^\ddagger_B|) =O(\delta^{1/12}).\]

Consider any $y\in\omega^\ddagger$ for which there exists $(x',x'')\in\omega^\times$
satisfying $tx'+(1-t)x''=y$, with the additional property that 
\begin{equation} \label{eq:farcenters}
\big|\big(t\varphi(x')+(1-t)\psi(x'')\big) - \big(t\varphi(x'(y))+(1-t)\psi(x''(y)) \big)\big|\  >\  2.
\end{equation}
The center of an algebraic sum of two intervals equals the algebraic sum of their centers.
Since each of the intervals $I_{x'},J_{x'}$ has diameter strictly less than $2$,
the inequality \eqref{eq:farcenters} forces $tI_{x'}+(1-t)J_{x''}$ to be disjoint from $tI_{x'(y)}+(1-t)J_{x''(y)}$, 
and consequently $t A_{x'}+(1-t) B_{x''}$ is disjoint from $tA_{x'(y)}+(1-t)B_{x''(y)}$.

The set $(tA+(1-t)B)_{y}$ contains both of these disjoint sets, so 
\begin{align*} |(tA+(1-t)B)_y|
&\ge t|A_{x'(y)}|+ (1-t)|B_{x''(y)}| + t|A_{x'}|+(1-t)|B_{x''}|
\\ &\ge t|A_{x'(y)}|+(1-t)|B_{x''(y)}| + \delta^{1/6}.  \end{align*}
For $x'\in\omega^\ddagger_A$ implies that $x'=x'(z)$ for some $z\in\omega^\ddagger\supset\omega^\dagger$;
the definition of $\omega^\dagger$ includes the condition that $|A_{x'(z)}|\ge\delta^{1/6}$.
Similarly, $|B_{x''}|\ge\delta^{1/6}$.

On the other hand, we know by \eqref{eq:anotherintegralbound} that
\[ \int_{\omega^\ddagger} \big(|(tA+(1-t)B)_y|-t|A_{x'(y)}|-(1-t)|B_{x''(y)}| \big)\,dy=O(\delta^{1/3}). \]
The integrand is a nonnegative function of $y\in\omega^\ddagger$. 
We have shown that for any $y\in\omega^\ddagger$ for which there exists $(x',x'')\in\omega^\times$
with the properties assumed above, this integrand is $\ge\delta^{1/6}$.
Therefore the set of all $y\in\omega^\ddagger$
for which there exists such a pair $(x',x'')$, has measure $O(\delta^{-1/6}\delta^{1/3})=O(\delta^{1/6})$.

Since $|\omega^\ddagger_A|\asymp |\omega^\ddagger_B| \asymp 1$, the set of all ordered pairs $(x',x'')\in\omega^\times$
which give rise to such a $y$ consequently also has measure $O(\delta^{1/6})$.
\end{proof}

This reasoning relied on the fact that $\delta^{1/3}\ll\delta^{1/6}$.
But both of these quantities are dictated by the value of $\eta$ chosen in Lemma~\ref{lemma:gotintervals2}.
The larger quantity, $\delta^{1/6}$ is comparable to $\eta$. The other quantity is determined 
by $\eta$ via the inequality in the localization lemma. It is the presence of the exponent $2$ in a term
on the right-hand side of that inequality which results in the comparatively favorable factor
$\delta^{1/3}\asymp \eta^2$.

Lemma~6.6 of \cite{christyoungest} states the following, with slightly different notation.
Let $n\ge 1$, and denote by $B_R$ a ball of radius $R\in\reals^+$ in $\reals^n$.
\begin{lemma} \label{lemma:threefunctions}
Let $R\in(0,\infty)$. Let $\Lambda\subset(0,1)$ be compact, and let $t\in\Lambda$.
Let $f,g,h: \reals^n\to\complex$ be measurable functions.  Suppose that
\begin{equation}
\big|\set{(x,y)\in B_{R}^2: |f(x)+g(y)+h(tx+(1-t)y)|>\tau}\big|<\rho|B_{R}^2|.
\end{equation}
Then there exists an affine function $L:\reals^n\to\complex$ such that
\begin{equation}
\big|\set{x\in B_R: |f(x)-L(x)| >C\tau}\big|<\eps(\rho)|B_R|,
\end{equation}
where $\eps(\rho)\to 0$ as $\rho\to 0$.
There exist affine functions which satisfy corresponding bounds for $g,h$.
Here $C$ is a constant which depends only on $n,\Lambda$, while $\eps(\rho)$
depends only on $\rho,n,\Lambda$.
\end{lemma}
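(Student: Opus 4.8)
The plan is to recast the three--function inequality as a stability statement for the Cauchy (Pexider) functional equation, and then apply the measurable solution theory of that equation in robust form. \textbf{Reduction.} Since $\Lambda$ is compact in $(0,1)$, the substitution $u=tx$, $v=(1-t)y$ is an affine bijection of $B_R\times B_R$ onto $B_{tR}\times B_{(1-t)R}$ with constant Jacobian $t^n(1-t)^n$, under which $u+v=tx+(1-t)y$; writing $F(u)=f(u/t)$, $G(v)=g(v/(1-t))$ and $H=h$, the hypothesis becomes that $|F(u)+G(v)+H(u+v)|\le\tau$ for all $(u,v)$ outside a set of measure $<\rho\,|B_{tR}\times B_{(1-t)R}|$. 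Every radius that appears below is comparable to $R$, every constant depends only on $n$ and $\Lambda$, and the statement is scale invariant, so it suffices to treat this Pexider form and produce affine approximants for $F,G,H$; undoing the substitution then returns the assertion for $f,g,h$ on $B_R$.

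\textbf{Small second differences of $H$.} With $\Phi(u,v)=F(u)+G(v)+H(u+v)$ there is the exact identity
\[ \Phi(u_1,v_1)-\Phi(u_2,v_1)-\Phi(u_1,v_2)+\Phi(u_2,v_2)=H(w+p+q)-H(w+p)-H(w+q)+H(w), \]
with $w=u_2+v_2$, $p=u_1-u_2$, $q=v_1-v_2$, because the $F$- and $G$-contributions cancel; moreover all four arguments of $H$ are convex combinations of points of $B_R$, hence lie in $B_R$. If the four corners $(u_i,v_j)$ all lie in the good set, the left side is $O(\tau)$; and each ``corner in the good set'' fails on an $O(\rho)$-fraction of the quadruples $(u_1,u_2,v_1,v_2)$. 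Fixing a generic $v_2$ and changing coordinates to $(w,p,q)$ (again an affine bijection with Jacobian $\asymp 1$), one obtains $|H(w+p+q)-H(w+p)-H(w+q)+H(w)|=O(\tau)$ for all $(w,p,q)$ in a box of radius $\asymp R$ outside a set of measure $\eps(\rho)$.

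\textbf{From small second differences to closeness to an affine function.} This is the heart of the matter. Fix a generic base point $w_0$, put $L(q)=H(w_0+q)-H(w_0)$, and write $\varphi(w,q)=H(w+q)-H(w)$, so the previous bound reads $\varphi(w+p,q)=\varphi(w,q)+O(\tau)$ for most $(w,p,q)$. First this gives $\varphi(w,q)=L(q)+O(\tau)$ for most $(w,q)$ (by Fubini in $w_0$); then the identity $L(q+q')=\varphi(w_0,q+q')=\varphi(w_0+q,q')+\varphi(w_0,q)$ combined with $\varphi(w_0+q,q')=L(q')+O(\tau)$ (valid for most $q$) and $\varphi(w_0,q)=L(q)$ yields $L(q+q')=L(q)+L(q')+O(\tau)$ for most $(q,q')$. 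Thus $L$ is measurable and additive up to $O(\tau)$ on most of a box, and one upgrades this in three moves: averaging in the free variable passes from ``most pairs'' to ``all pairs, with error $O(\tau)$'', the Hyers--Ulam stability of Cauchy's equation replaces $L$ by an exactly additive measurable function at distance $O(\tau)$, and a measurable additive function on $\reals^n$ is linear; hence $L(q)=\Lambda_0(q)+O(\tau)$ for a linear functional $\Lambda_0$, off a set of measure $\eps(\rho)$. Finally $\varphi(w,q)=\Lambda_0(q)+O(\tau)$ for most $(w,q)$ says that $H-\Lambda_0$ is constant up to $O(\tau)$ off a set of measure $\eps(\rho)$, i.e. $H$ lies within $O(\tau)$ of an affine function there.

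\textbf{Recovery of $F,G$ and the main obstacle.} For most $v_0$ the slice $\set{u:|\Phi(u,v_0)|>\tau}$ has measure $\eps(\rho)$ and $H$ is within $O(\tau)$ of its affine model on $v_0+B_{tR}$; fixing such a $v_0$, $F(u)=-G(v_0)-H(u+v_0)+O(\tau)$ is within $O(\tau)$ of an affine function of $u$ off a set of measure $\eps(\rho)$, and $G$ is treated symmetrically. Undoing $u=tx$, $v=(1-t)y$ yields the claimed bounds for $f,g,h$ on $B_R$, with $C$ and $\eps(\cdot)$ depending only on $n$ and $\Lambda$. I expect the main obstacle to be the third step: bookkeeping the exceptional sets so that the successive passages ``small on most of a configuration space'' $\Rightarrow$ ``close to a model function off a small set'' compose into a single $\eps(\rho)$ with $\eps(\rho)\to0$, together with the measurability issues that arise in invoking the solution theory of Cauchy's equation --- this is precisely the content packaged in Lemma~6.6 of \cite{christyoungest}.
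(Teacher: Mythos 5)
The paper itself offers no proof of this lemma beyond citing Lemma~6.6 of \cite{christyoungest} --- stated there with $h(x+y)$ in place of $h(tx+(1-t)y)$ --- and remarking that the present variant follows by composing $f,g,h$ with affine maps. Your opening substitution $u=tx$, $v=(1-t)y$ is exactly that affine reparametrization, so up to that point you and the paper coincide. What you add is a sketch of the cited lemma itself, which the paper treats as a black box. The skeleton of your sketch is correct: the second mixed difference of $\Phi(u,v)=F(u)+G(v)+H(u+v)$ cancels $F$ and $G$ exactly and isolates a second difference of $H$; rewriting in the variables $(w,p,q)$ (after fixing a generic $v_2$) converts the hypothesis into an approximate second-difference bound $|H(w+p+q)-H(w+p)-H(w+q)+H(w)|=O(\tau)$ off a small set; and setting $\varphi(w,q)=H(w+q)-H(w)$, $L(q)=\varphi(w_0,q)$ yields an approximate Cauchy equation $L(q+q')=L(q)+L(q')+O(\tau)$ for most $(q,q')$.

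The one step you wave at rather than argue is the upgrade from this most-pairs approximate additivity to an honest linear functional that is close to $L$ off a small set. Your line that ``averaging in the free variable passes from most pairs to all pairs with error $O(\tau)$'' is not a proof as written: a naive average does not automatically inherit approximate additivity from a set of pairs of small codensity. The standard device is to set $\tilde L(q)$ equal to the average (or median) of $L(q+q')-L(q')$ over $q'$ in a box, observe that $\tilde L(q_1)+\tilde L(q_2)-\tilde L(q_1+q_2)$ telescopes exactly (modulo boundary terms coming from the shifted domain of integration) after the change of variable $q''=q_1+q'$, verify that $\tilde L$ agrees with $L$ to within $O(\tau)$ off a set of measure $o_\rho(1)$, and then appeal to a restricted-domain Hyers--Ulam theorem together with the fact that a measurable additive function is linear. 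The bookkeeping of exceptional sets and boundary effects in this chain, together with the measurability issues in the Cauchy-equation step, is the genuine content of the cited lemma --- and you correctly flag exactly this as the obstacle. So: same approach as the paper, and a plausible outline of the ingredient the paper imports, but the proposal does not by itself close the gap it identifies.
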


More accurately, 
Lemma~6.6 of \cite{christyoungest} is stated with $f(x)+g(y)+h(tx+(1-t)y)$ replaced in the
hypothesis by $f(x)+g(y)+h(x+y)$, but the proof given there applies under the modified
hypothesis with no essential changes. Alternatively, the variant stated here can be deduced
by applying the original formulation to functions defined by composing $f,g,h$ with appropriate 
affine transformations.

Invoking this lemma in the present context gives: 
\begin{lemma} \label{lemma:intervalbundles}
There exist a linear function $L:\reals\to\reals$, constants $c,c'\in\reals$,
and a measurable set $\scripte\subset[0,a]$ such that 
\begin{align}
&|\varphi(x)-L(x)-c|=O(1) \ \text{ for all $x\notin\scripte$}
\\
&|\psi(x)-L(x)-c'|=O(1)\ \text{ for all $x\notin\scripte$} 
\\
&|\scripte|=o_\delta(1).
\end{align}
\end{lemma}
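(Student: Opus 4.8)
The plan is to invoke Lemma~\ref{lemma:threefunctions} with $n=1$, applied to functions built from $\varphi$ and $\psi$, using the smallness of $|\omega^\times|$ established in the preceding lemma as the quantitative hypothesis. First I would set up the three functions: take $f$ to be $\varphi$ extended by (say) $0$ off $\omega^\ddagger_A$, take $g$ to be $\psi$ extended similarly off $\omega^\ddagger_B$, and take $h$ to be $-(t\varphi+(1-t)\psi)\circ(x\mapsto x)$ composed appropriately so that on the ``good'' pairs $(x',x'')$ — i.e. those with $tx'+(1-t)x''\in\omega^\ddagger$ and not belonging to $\omega^\times$ — the combination $f(x')+g(x'')+h(tx'+(1-t)x'')$ vanishes, because there the center of $tI_{x'}+(1-t)J_{x''}$ is within $2$ of the center of $I_{x'(y)}$, $J_{x''(y)}$ for $y=tx'+(1-t)x''$, and the center of an algebraic sum of intervals is the sum of the centers. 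More precisely, on such pairs $t\varphi(x')+(1-t)\psi(x'') - \big(t\varphi(tx'+(1-t)x'')+(1-t)\psi(tx'+(1-t)x'')\big)$ has absolute value $<2$, so with $h(y)=-t\varphi(ac^{-1}y)-(1-t)\psi(bc^{-1}y)$ (an honest measurable function of $y$ once $\varphi,\psi$ are extended measurably) the sum $f(x')+g(x'')+h(tx'+(1-t)x'')$ is bounded by $2$ off $\omega^\times$, up to a set of pairs of negligible measure coming from $x'\notin\omega^\ddagger_A$ or $x''\notin\omega^\ddagger_B$ or $tx'+(1-t)x''\notin\omega^\ddagger$.

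Second, I would verify the hypothesis of Lemma~\ref{lemma:threefunctions}: the set of $(x',x'')\in B_R^2$ (for a fixed $R$ chosen large enough that $[0,a]\times[0,b]\subset B_R^2$) on which $|f(x')+g(x'')+h(tx'+(1-t)x'')|>\tau$ with $\tau$ a fixed constant (e.g. $\tau=2$) is contained, modulo a set of measure $O(\delta^{1/12})$, in $\omega^\times$, hence has measure $O(\delta^{1/12})$. Thus we may take $\rho=\rho(\delta)=O(\delta^{1/12})\to 0$. Lemma~\ref{lemma:threefunctions} then produces an affine function $L_0(x)=L(x)+c$ with $|\varphi(x)-L(x)-c|\le C\tau = O(1)$ off a set of measure $\eps(\rho)=o_\delta(1)$, and an affine function giving the corresponding bound for $g=\psi$, say $|\psi(x)-\tilde L(x)-\tilde c|=O(1)$ off an exceptional set of measure $o_\delta(1)$. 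To get a \emph{single} linear part $L$ shared by $\varphi$ and $\psi$, I would use the $h$-statement of the lemma together with the relation $h(y)=-t\varphi(ac^{-1}y)-(1-t)\psi(bc^{-1}y)$: the affine approximants to $f,g,h$ are forced to be compatible (the linear slope of the approximant to $h$ equals $-ta/c\cdot(\text{slope of }f\text{'s approx}) - (1-t)b/c\cdot(\text{slope of }g\text{'s approx})$ up to the stated errors), and since $a/c,b/c = 1+O(\delta^{1/12})$ this compatibility, pushed through, shows the slopes of the approximants to $\varphi$ and $\psi$ agree up to $o_\delta(1)$; absorbing that discrepancy into the $O(1)$ error term (legitimate since $x$ ranges over a bounded interval $[0,2M]$) lets us replace both slopes by a common $L$. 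Finally I would take $\scripte$ to be the union of the (at most three) exceptional sets produced above together with the negligible bad sets $[0,a]\setminus\omega^\ddagger_A$, $[0,b]\setminus\omega^\ddagger_B$; each has measure $o_\delta(1)$ or $O(\delta^{1/12})$, so $|\scripte|=o_\delta(1)$.

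The main obstacle I anticipate is the bookkeeping to arrange a \emph{common} linear function $L$ for both $\varphi$ and $\psi$ rather than two a priori unrelated affine approximants, and to make sure the three functions $f,g,h$ are genuinely defined on a common domain (a ball $B_R\subset\reals^1$) with the algebraic identity $f(x')+g(x'')+h(tx'+(1-t)x'')=0$ holding precisely off $\omega^\times$ up to null sets. This requires care with the two distinct affine reparametrizations $y\mapsto ac^{-1}y$ and $y\mapsto bc^{-1}y$ of the interval $[0,c]$, but because these have derivatives $1+O(\delta^{1/12})$ they distort measures only negligibly, and the compatibility of slopes then follows from the structure of Lemma~\ref{lemma:threefunctions} as applied to the triple. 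Everything else is routine: choosing $R$, extending $\varphi,\psi$ measurably, and collecting exceptional sets.
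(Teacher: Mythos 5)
Your proof is essentially the same as the paper's: both apply Lemma~\ref{lemma:threefunctions} with $\tau$ a fixed constant to $f=t\varphi$, $g=(1-t)\psi$, and the combination $h$, using $|\omega^\times|=O(\delta^{1/12})$ to supply the hypothesis with $\rho=o_\delta(1)$, and both then reconcile the three resulting affine approximants into a single linear $L$ by a compatibility argument, absorbing the bounded slope discrepancy into the $O(1)$ error over the bounded interval. The only quibble is a small notational slip at the start (you write $f=\varphi$, $g=\psi$ before correctly switching to $t\varphi$, $(1-t)\psi$ in the next sentence), and you in fact handle the affine reparametrizations $y\mapsto ac^{-1}y$, $y\mapsto bc^{-1}y$ more explicitly than the paper, which silently absorbs the $1+O(\delta^{1/12})$ distortion.
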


\begin{proof}
Apply Lemma~\ref{lemma:threefunctions} with $\tau=2$, $f=t\varphi$, $g=(1-t)\psi$, and $h=t\varphi+(1-t)\psi$.
Then $|f(x)+g(y)+h(tx+(1-t)y)|\le \tau$ unless $(x,y)\in\omega^\times$, 
and we have shown that $|\omega^\times|=O(\delta^{1/12})$.
Therefore the hypotheses of the lemma are satisfied with $\rho=O(\delta^{1/12})$. We conclude that
there exist linear functions $L,L',L''$ and constants $c,c',c''$ such that
\begin{gather*}
\varphi(x)-L(x)-c=O(1), 
\\
\psi(x)-L'(x)-c'=O(1), 
\\
t\varphi(x)+(1-t)\psi(x)-L''(x)-c''=O(1) 
\end{gather*}
for all $x\in[0,a]$ except for a set of measure $o_\delta(1)$. 

Inserting this information into the inequality
\[\big|t\varphi(x)+(1-t)\psi(y) -t\varphi(tx+(1-t)y)-(1-t)\psi(tx+(1-t)y) \big|< 2,\]
which holds for all $(x,y)\in[0,a]^2$ outside an exceptional set of measure $O(\delta^{1/12})$, gives
\[ tL(x)+(1-t)L'(y)-tL''(x)-(1-t)L''(y)+c+c'-c''=O(1)\] 
outside a set of $(x,y)\in[0,a]^2$ having measure $o_\delta(1)$. 
This forces $L=L''=L'$ outside a set whose measure is $o_\delta(1)$.
\end{proof}

This discussion can be extended to $\reals^{1}\times\reals^{d-1}$ for any $d$,
with the intervals $I_x,J_x$ of Proposition~\ref{prop:intervalsKL} replaced by convex subsets 
of $\reals^{d-1}$.


Now consider the sets \[\tilde A=\set{(x,y-L(x)-c): (x,y)\in A} \text{ and } \tilde B = \set{(x,y-L(x)-c'): (x,y)\in B}.\]
Then  \[t\tilde A + (1-t)\tilde B = \set{(x,y-L(x)-tc-(1-t)c'): (x,y)\in tA+(1-t)B}\]
and consequently 
$|\tilde A|=|A|$, $|\tilde B|=|B|$, and $|t\tilde A+(1-t)\tilde B|=|tA+(1-t)B|$.

We have proved:
\begin{lemma} \label{lemma:finallynormalized}
There exists a ball $\scriptb\subset\reals^2$ of finite radius centered at $0$ with the following property.
Let $A,B,t$ satisfy the hypotheses \eqref{hypos} and the normalization \eqref{eq:verticalsupsnormalized}. 
If $\delta$ is sufficiently small then there exist an invertible measure-preserving affine automorphism $\scriptl$
of $\reals^2$ and a vector $v\in\reals^2$ such that 
\begin{equation} |\scriptl(A)\cap \scriptb|=1+o_\delta(1) \ \text{ and } \ 
|\big(\scriptl(B)+v\big)\cap \scriptb|=1+o_\delta(1).  \end{equation}
\end{lemma}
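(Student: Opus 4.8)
The statement is essentially a repackaging of what has already been assembled, so the plan is bookkeeping rather than new analysis. By the earlier steps we have placed $A,B$ in a normalized position: after the measure-preserving affine change of variables of Step Two achieving \eqref{eq:verticalsupsnormalized}, after the translations of Step Four making $\scripti=[0,a]$ and $\scriptj=[0,b]$, and after the vertical shear in the paragraph preceding the statement (replacing $A,B$ by $\tilde A,\tilde B$ via $(x,y)\mapsto(x,y-L(x)-c)$ and $(x,y)\mapsto(x,y-L(x)-c')$), we have a pair of sets whose measures are still $1+O(\delta)$, whose sumset measure is still $<1+\delta$, and for which the slice-centering functions $\varphi,\psi$ now satisfy $|\varphi(x)|=O(1)$ and $|\psi(x)|=O(1)$ off a set of measure $o_\delta(1)$. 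The composite of all the normalizing transformations applied to $A$ is an invertible measure-preserving affine automorphism $\scriptl$ of $\reals^2$; applied to $B$ it is $\scriptl$ followed by the translation by the vector $v=(0,\,c-c')$ coming from the difference of the two vertical shears. Thus $\scriptl(A)=\tilde A$ and $\scriptl(B)+v=\tilde B$ up to the horizontal translations, which I fold into $\scriptl$ and $v$.

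**Key steps.** First I would invoke Proposition~\ref{prop:intervalsKL}: for $x$ with $ca^{-1}x\in\omega^\ddagger$ the slice $A_{x'}$ lies in the interval $I_{x'}$ of length $\le|A_{x'}|+\delta^{1/4}\le 2+o_\delta(1)$, and by Lemma~\ref{lemma:intervalbundles} the center $\varphi(x')$ of $I_{x'}$ satisfies $|\varphi(x')-L(x')-c|=O(1)$ off the exceptional set $\scripte$ of measure $o_\delta(1)$. After the shear, the center becomes $O(1)$, so the entire slice $\tilde A_{x'}$ lies in a fixed bounded interval of $\reals^1$, for all $x'\in[0,a]$ outside a set of measure $o_\delta(1)$; and $a=O(1)$ by Lemma~\ref{lemma:piAbound}. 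Second, I would estimate the measure of the part of $\tilde A$ lying outside a large fixed ball $\scriptb$ centered at $0$: it is controlled by (i) the portion of $A$ with $\pi(x)\notin\scripti$, which is $O(\delta^{1/6})$ by \eqref{bound16}; (ii) the portion with $x'\in$ (exceptional set for Step Six or for Lemma~\ref{lemma:intervalbundles}), which has $\reals^1$-measure $o_\delta(1)$ and slices of height $\le 1+O(\delta^{1/2})$, hence total measure $o_\delta(1)$; (iii) for the remaining good $x'$, the slice $\tilde A_{x'}$ sits in a fixed bounded interval, so choosing $\scriptb$ large enough captures it entirely. Summing, $|\tilde A\setminus\scriptb|=o_\delta(1)$, and since $|\tilde A|=|A|=1+O(\delta)$ we get $|\scriptl(A)\cap\scriptb|=|\tilde A\cap\scriptb|=1+o_\delta(1)$. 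Third, the identical argument applied to $\tilde B=\scriptl(B)+v$, using the $B$-halves of \eqref{bound16}, Proposition~\ref{prop:intervalsKL}, and Lemma~\ref{lemma:intervalbundles}, and the bound $b=O(1)$, gives $|(\scriptl(B)+v)\cap\scriptb|=1+o_\delta(1)$.

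**Main obstacle.** There is no serious obstacle; the only point requiring care is uniformity of the radius of $\scriptb$. One must check that the bound on the vertical extent of the good slices $\tilde A_{x'},\tilde B_{x''}$, which comes from the $O(1)$-terms in Lemma~\ref{lemma:intervalbundles} together with the length bound $|I_{x'}|,|J_{x''}|\le 2$ from the remark after Proposition~\ref{prop:intervalsKL}, depends only on $\Lambda$ and not on $A,B,t,\delta$; and that the horizontal extent $\max(a,b)=O(1)$ from Lemma~\ref{lemma:piAbound} is likewise uniform. Both are the case, since every implied constant in those earlier results was asserted to be uniform for $t\in\Lambda$ and $\delta$ small. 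Hence a single radius $R=R(\Lambda)$ works, and $\scriptb=B_R(0)$ is the required ball.
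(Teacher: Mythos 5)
Your proposal is correct and follows the paper's (largely implicit) argument: the paper presents the lemma with a ``We have proved:'' preamble after defining $\tilde A,\tilde B$ via the shear, leaving precisely the bookkeeping you supply. Your accounting of the three sources of mass loss --- the $O(\delta^{1/6})$ piece outside $\pi^{-1}(\scripti)$ from \eqref{bound16}, the slices over the $o_\delta(1)$-measure exceptional set $([0,c]\setminus\omega^\ddagger) \cup \scripte$ bounded by $\norm{A}_\infty = 1 + O(\delta^{1/2})$, and the good slices contained in intervals of length $\le 2$ centered at $O(1)$ after the shear --- together with the observation that $a,b = O(1)$ and that all implied constants are uniform in $t\in\Lambda$, is exactly what yields the fixed radius. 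The only slip is in identifying $v$ as $(0,c-c')$: the independent horizontal translations of Step Four do not commute with the shear $(x,y)\mapsto(x,y-L(x)-c)$, so $v$ picks up a correction of the form $(h_B - h_A,\ L(h_A - h_B) + c - c')$, but since the difference $T_B - T_A$ of the two composite normalizations is still a constant vector this is harmless, and you already flagged the issue.
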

Henceforth we replace $A,B$ by their images $\scriptl(A),\scriptl(B)$. It suffices to prove that these satisfy
the conclusion of Theorem~\ref{thm:BMmult}.

A simple consequence is that if $\scriptb$ is enlarged by a fixed amount, then $A,B$ are contained
entirely in $\scriptb$.
\begin{lemma}
There exists a bounded set $\scriptr\subset\reals^2$ such that whenever
$A,B,t,\scriptl,v$ satisfy the hypotheses and conclusions of Lemma~\ref{lemma:finallynormalized},
the sets $A,B$ are contained in $\scriptr$.
\end{lemma}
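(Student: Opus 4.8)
The plan is to leverage the two facts in place at this point: the hypothesis $|tA+(1-t)B|<1+\delta$, and the conclusion of Lemma~\ref{lemma:finallynormalized} that $A$ and $B+v$ each carry essentially all of their (unit) mass inside the fixed ball $\scriptb$. First I would reduce to the case $v=0$: replacing $B$ by $B+v$ alters neither $|B|$ nor $|tA+(1-t)B|$ (the sumset is merely translated by $(1-t)v$) and is harmless for the sequel, since Theorem~\ref{thm:BMmult} allows $A$ and $B$ to be covered by \emph{different} translates of $\scriptk$; so I may assume $|A\cap\scriptb|=1+o_\delta(1)$ and $|B\cap\scriptb|=1+o_\delta(1)$, and it suffices to produce a fixed ball $\scriptr$, depending only on $\Lambda$ and the radius $r$ of $\scriptb$, with $A,B\subset\scriptr$. (For the original $B$ this reads $B\subset\scriptr-v$.) The intuition is that Brunn-Minkowski applied to $A\cap\scriptb$ and $B\cap\scriptb$ already accounts for essentially all of the sumset mass, so there is no room left for a stray portion of $A$ or of $B$ far from the origin: such a portion, Minkowski-combined with $B\cap\scriptb$ (respectively $A\cap\scriptb$), would contribute a further block of sumset of measure bounded below, and disjoint from $\scriptb$.

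The first substantive step is the estimate
\[ \big|(tA+(1-t)B)\setminus\scriptb\big| = o_\delta(1). \]
Since $\scriptb$ is a ball centered at $0$ it is convex and $t\scriptb+(1-t)\scriptb=\scriptb$, so $t(A\cap\scriptb)+(1-t)(B\cap\scriptb)\subset\scriptb$; and by the Brunn-Minkowski inequality in $\reals^2$,
\[ \big|t(A\cap\scriptb)+(1-t)(B\cap\scriptb)\big| \ge \big(t|A\cap\scriptb|^{1/2}+(1-t)|B\cap\scriptb|^{1/2}\big)^2 = 1+o_\delta(1). \]
As this set lies inside $(tA+(1-t)B)\cap\scriptb$ and $|tA+(1-t)B|<1+\delta$, the estimate follows by subtraction.

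Next, suppose $p\in A$ with $|p|>(2-t)r/t$. Then $tp+(1-t)(B\cap\scriptb)\subset tA+(1-t)B$, and this set is contained in the ball of radius $(1-t)r$ about $tp$, which is disjoint from $\scriptb$ because $|tp|>r+(1-t)r$. Hence
\[ \big|(tA+(1-t)B)\setminus\scriptb\big| \ge (1-t)^2|B\cap\scriptb| = (1-t)^2\big(1+o_\delta(1)\big), \]
and for $t\in\Lambda$ the right side is bounded below by a positive constant, contradicting the previous step once $\delta$ is small. Thus $A$ lies in the ball of radius $(2-t)r/t$ about $0$. Interchanging the roles of $A,B$ and of $t,1-t$, and using $t(A\cap\scriptb)+(1-t)q\subset tA+(1-t)B$ for $q\in B$, shows in the same way that $B$ lies in the ball of radius $(1+t)r/(1-t)$ about $0$. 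Taking $\scriptr$ to be the larger of the two balls—bounded, and depending only on $\Lambda$ and $r$—completes the argument.

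I do not expect a genuine obstacle here; this is a short consequence of what has already been done. The only points requiring care are the bookkeeping of the translation $v$, and the one place the argument has content, namely the verification that the "bulk" part of the sumset near $\scriptb$ and the stray part near $tp$ (or near $(1-t)q$) are genuinely disjoint, which is exactly where the convexity and centering of $\scriptb$, together with the compactness of $\Lambda$ keeping $t$ and $1-t$ away from $0$ and $1$, enter.
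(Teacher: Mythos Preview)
Your proposal is correct and follows essentially the same approach as the paper's own proof: both argue that if a point of $A$ lay outside the ball of radius $t^{-1}(2-t)r$, then $tz+(1-t)(B\cap\scriptb)$ would be disjoint from $t(A\cap\scriptb)+(1-t)(B\cap\scriptb)\subset\scriptb$ and contribute an additional $(1-t)^2(1+o_\delta(1))$ to $|tA+(1-t)B|$, a contradiction. Your treatment is in fact slightly more careful than the paper's about the translation vector $v$.
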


\begin{proof}
$\scriptb$ can be taken to be a ball $B(0,R)$ centered at the origin.
Set $R' = t^{-1}(2-t)R$.
If $A$ contains some point $z\notin B(0,R')$, then 
\[ tA+(1-t)B\supset \big(tz+(1-t)(B\cap\scriptb) \big)
\cup \big(t (A\cap\scriptb)+(1-t)(B\cap\scriptb) \big).\]
Since $tR'-(1-t)R=R$, these two sets are disjoint except possibly for a single point of intersection.
Therefore 
\begin{align*} 
|tA+(1-t)B| &\ge (1-t)^d|B\cap\scriptb| +|t (A\cap\scriptb)+(1-t)(B\cap\scriptb)| 
\\ &\ge (1-t)^d(1+o_\delta(1)) + |A\cap\scriptb|^t|B\cap\scriptb)|^{1-t} 
\\ &\ge 1+(1-t)^d+ o_\delta(1),  \end{align*}
which is a contradiction for small $\delta$.
The same reasoning  applies to $B$.
\end{proof}

\section{Step Eight: Rotations}
Let $\scripto$ be an arbitrary element of the group $O(2)$. For any set $E\subset\reals^2$ 
define $E_\scripto=\set{x: \scripto x \in E}$. Then $|E_\scripto|=|E|$.
Since $A_\scripto+B_\scripto = (A+B)_{\scripto}$, if $|t A+(1-t) B|<|A|^t|B|^{1-t}+\delta$
then $|t A_\scripto+(1-t) B_\scripto|<|A_\scripto|^t|B_\scripto|^{1-t}+\delta$, 
so the pair $(A_\scripto,B_\scripto)$ satisfies the hypotheses of our theorem, with the same value of $\delta$
as for $(A,B)$.

Let $A=\scriptl(A)$ and $B=\scriptl(B)$ satisfy the hypotheses and conclusions of Lemma~\ref{lemma:finallynormalized}.
Let $\scripto\in O(2)$ be arbitrary, and consider the pair $(A_\scripto,B_\scripto)$, 
which continues to satisfy the hypotheses \eqref{hypos}.
Apply the above reasoning, from Step 1 through Lemma~\ref{lemma:intervalbundles} but 
without introducing the affine transformation used to obtain Lemma~\ref{lemma:finallynormalized},
New quantities such as $\sup_{x\in\reals}|(A_\scripto)_x|$, an associated parameter $a_\scripto$, a linear
function $L_\scripto$, and parameters $c_\scripto,c'_\scripto$ arise. 

An essential point is that these quantities are now automatically bounded, uniformly in $\scripto$
as well as in $A,B,t$.
\begin{lemma} \label{lemma:rotations} 
Let $(A,B)$ be a pair of compact subsets of $\reals^2$ which satisfies
the hypotheses and 
conclusions of Lemma~\ref{lemma:finallynormalized}. Then uniformly for all $\scripto\in O(2)$,
\begin{align*}
& a_\scripto\asymp 1, 
\\ &\sup_{x\in\reals}|(A_\scripto)_x|\asymp 1  \text{ and } \sup_{x\in\reals}|(B_\scripto)_x|\asymp 1, 
\\ &\sup_{|x|\le 1}|L(x)|= O(1), \text{ and }  c_\scripto,c'_\scripto=O(1). 
\end{align*}
\end{lemma}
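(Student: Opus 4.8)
The plan is to observe that the normalization already carried out confines \emph{every} rotate of $A$ and of $B$ to one fixed ball, and that all the constants generated by re-running Steps~1--7 for $(A_\scripto,B_\scripto)$ then inherit bounds which do not depend on $\scripto$. Recall that, after the normalization of Lemma~\ref{lemma:finallynormalized}, the sets $A,B$ are contained in a fixed bounded set $\scriptr$; fix $\rho<\infty$ with $\scriptr\subset B(0,\rho)$. If $x\in A_\scripto$ then $\scripto x\in A\subset B(0,\rho)$, so $|x|=|\scripto x|\le\rho$; hence $A_\scripto\subset B(0,\rho)$, and likewise $B_\scripto\subset B(0,\rho)$, uniformly in $\scripto\in O(2)$. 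This is the observation on which everything rests.

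Granting it, the second assertion of the lemma is immediate: each vertical slice $(A_\scripto)_x$ lies in an interval of length $2\rho$, so $\sup_{x\in\reals}|(A_\scripto)_x|\le 2\rho$, while $\int|(A_\scripto)_x|\,dx=|A_\scripto|=1+O(\delta)$ with the range of integration inside $[-\rho,\rho]$, so $\sup_{x\in\reals}|(A_\scripto)_x|\ge|A_\scripto|/(2\rho)$, which is bounded below by a fixed positive constant for small $\delta$; thus $\sup_x|(A_\scripto)_x|\asymp 1$, and the same for $B_\scripto$. Consequently the measure-preserving diagonal change of variables of Step~2, applied to $(A_\scripto,B_\scripto)$, rescales both coordinates by factors comparable to $1$: its vertical factor is chosen so that $\norm{A_\scripto}_\infty$ and $\norm{B_\scripto}_\infty$ become $1+O(\delta^{1/2})$, and both are already $\asymp 1$, while its horizontal factor is the reciprocal. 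Hence the rescaled rotated sets again lie in a fixed ball $B(0,\rho')$ and have measure $\asymp 1$, uniformly in $\scripto$; in particular the constant $M$ of Lemma~\ref{lemma:piAbound} may be taken independent of $\scripto$, so that Step~4 produces $\scripti_\scripto=[0,a_\scripto]$ with $a_\scripto\le 2M=O(1)$, while $a_\scripto\ge|\scripti_\scripto|\ge|\scripta_{s_0}|\ge\tfrac14$ by the computation preceding Lemma~\ref{lemma:gotintervals2}, valid because $s_0\le 2\delta^{1/6}\le\tfrac12 M^{-1}$ for small $\delta$. Thus $a_\scripto\asymp 1$.

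For the last assertion, the intervals $I_{x'},J_{x''}$ of Proposition~\ref{prop:intervalsKL} contain slices of the rescaled rotated sets --- slices of length $O(1)$ that meet $B(0,\rho')$ --- and exceed them in length by at most $\delta^{1/4}$, so they lie in a fixed bounded interval; hence the centers $\varphi_\scripto(x),\psi_\scripto(x)$ are $O(1)$ for $x$ in $\omega^\ddagger_{A,\scripto},\omega^\ddagger_{B,\scripto}$ respectively, and one may choose the arbitrary extensions to $[0,a_\scripto],[0,b_\scripto]$ to be $O(1)$ as well. Lemma~\ref{lemma:intervalbundles} then gives $|\varphi_\scripto(x)-L_\scripto(x)-c_\scripto|=O(1)$ off a set of measure $o_\delta(1)$, whence $|L_\scripto(x)+c_\scripto|=O(1)$ on a subset of the fixed-length interval $[0,a_\scripto]$ of measure bounded below; evaluating at two points of that subset separated by a fixed positive distance forces $\sup_{|x|\le1}|L_\scripto(x)|=O(1)$ and then $|c_\scripto|=O(1)$, and $|c'_\scripto|=O(1)$ follows in the same way from the $\psi_\scripto$ inequality of Lemma~\ref{lemma:intervalbundles}. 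The only point requiring care is the uniform comparability of the Step~2 rescaling factors to $1$ --- all of Steps~3--7 are carried out in those rescaled coordinates --- and this rests entirely on the two-sided estimate $\sup_x|(A_\scripto)_x|\asymp 1$, i.e. on the fixed confining ball together with $|A_\scripto|\asymp 1$.
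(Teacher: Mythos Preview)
Your proof is correct and follows essentially the same approach as the paper: both rest on the rotation-invariance of the fixed confining ball, so that the rotated pair $(A_\scripto,B_\scripto)$ is still controlled by one fixed ball and the constants generated by re-running Steps~1--7 are uniform in $\scripto$. You use the full containment $A,B\subset\scriptr\subset B(0,\rho)$ (from the lemma immediately following Lemma~\ref{lemma:finallynormalized}) rather than the paper's slightly weaker $|A_\scripto\cap\scriptb|\gtrsim 1$, which lets you read off the upper bound $\sup_x|(A_\scripto)_x|\le 2\rho$ directly instead of via the sumset argument the paper sketches; and you supply the details for $a_\scripto$, $L_\scripto$, $c_\scripto$, $c'_\scripto$ that the paper leaves implicit --- but the underlying idea is the same.
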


\begin{proof}
All of these conclusions follow simply from the facts that
$|A_\scripto\cap\scriptb|=|A\cap\scriptb|\gtrsim 1$  and likewise
$|B_\scripto\cap\scriptb|=|B\cap\scriptb|\gtrsim 1$, where $\scriptb\subset\reals^2$
is the ball mentioned in Lemma~\ref{lemma:finallynormalized}.   

For instance,  since
\[ 1+O(\delta)=|A_\scripto|\le o_\delta(1)+ |A_\scripto\cap\scriptb|
\le o_\delta(1)+C\sup_x|(A_\scripto)_x|, \]
$\sup_{x\in\reals}|(A_\scripto)_x|$ cannot be small.
On the other hand, for any $z\in\reals^1$, 
$tA_\scripto+(1-t)B_\scripto$ contains $\set{tz}\times t(A_\scripto)_z + (1-t)B_\scripto$,
so \[|tA_\scripto+(1-t)B_\scripto|\ge t|(A_\scripto)_z| (1-t)|\pi(B_\scripto)|\] where $\pi:\reals^2\to\reals^1$
is the projection $\pi(x,y)=x$. 
The measure of $\pi(B_\scripto)$ is bounded below since $|B_\scripto\cap\scriptb|$ is bounded below.
\end{proof}

In other words, once $A,B$ have been normalized via appropriately adapted measure-preserving
affine transformations, all rotations of $A,B$ remain normalized.

\section{Step Nine: Precompactness}

For $s\ge 0$ let $H^s$ denote the usual Sobolev space of all functions in $\lt(\reals^2)$ 
which have $s$ derivatives in $\lt$.
\begin{lemma} \label{lemma:FT}
There exists a bounded subset $\scriptr\subset\reals^2$ with the following property.
Let $s\in(0,\tfrac12)$.
For all sufficiently small $\delta>0$,
for any pair of compact subsets $A,B\subset\reals^2$
which satisfy the hypotheses \eqref{hypos},
there exist a measure-preserving affine automorphism $\scriptl$ of $\reals^2$
and a vector $v\in\reals^2$ such that the sets $\tilde A=\scriptl(A)$
and $\tilde B = \scriptl(B)+v$ are contained in $\scriptr$.
Moreover, there exists a decomposition 
\begin{equation} \one_{\tilde A} = g+b \end{equation}
where $g\in H^s(\reals^2)$ and
\begin{align} &\norm{g}_{H^s}=O(1) \text{\  uniformly in $\delta$} 
\\ &\norm{b}_{H^0}=o_\delta(1).  \end{align}
There is a decomposition of $\one_{\tilde B}$ with corresponding properties.
\end{lemma}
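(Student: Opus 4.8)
We build the decomposition by replacing $\tilde A$ with an explicit interval-bundle set $E$ that is $o_\delta(1)$-close to $\tilde A$ in Lebesgue measure and has uniformly bounded perimeter, and then set $g=\one_E$ and $b=\one_{\tilde A}-\one_E$. First take $\scriptl$ to be the composition of the measure-preserving dilation producing the normalization \eqref{eq:verticalsupsnormalized} with the measure-preserving shear $(x,y)\mapsto(x,y-L(x)-c)$ of Step Seven, and let $v$ be the corresponding translation for $B$; then Lemma~\ref{lemma:finallynormalized} and the lemma following it apply, so $\tilde A=\scriptl(A)$ and $\tilde B=\scriptl(B)+v$ lie in a fixed bounded set $\scriptr$. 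It suffices to treat $\tilde A$, the argument for $\tilde B$ being identical.

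To construct $E$: let $G$ be the set of $x$ for which Proposition~\ref{prop:intervalsKL} provides an enclosing interval $I_x\supset\tilde A_x$ with $|I_x|\le|\tilde A_x|+\delta^{1/4}$, so that $|\reals\setminus G|=o_\delta(1)$ after unwinding the affine change of variables; put $\ell^+(x)=\sup\tilde A_x$ and $\ell^-(x)=\inf\tilde A_x$ for $x\in G$, and $\ell^\pm(x)=0$ otherwise. The interval-hull bundle $E^{(1)}=\set{(x,y):\ell^-(x)\le y\le\ell^+(x)}$ then satisfies $|\tilde A\triangle E^{(1)}|=o_\delta(1)$ (on $G$ the excess slice measure is $\le\delta^{1/4}$, and $|\reals\setminus G|$ is small while $|\tilde A_x|$ is bounded). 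I then replace $\ell^+$ by its unimodal hull $\hat\ell^+$ — the least function $\ge\ell^+$ all of whose superlevel sets are intervals — and $\ell^-$ by the analogous anti-unimodal hull $\hat\ell^-\le\ell^-$, and set $E=\set{(x,y):\hat\ell^-(x)\le y\le\hat\ell^+(x)}$. Since $\tilde A\subset\scriptr$, the functions $\hat\ell^\pm$ take values in a fixed bounded interval, so being (anti-)unimodal they have total variation $O(1)$; hence $|E|=O(1)$, $\mathrm{Per}(E)=O(1)$, and $E\subset\scriptr$ after enlarging $\scriptr$ by a bounded amount.

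The $H^s$ bound for $g=\one_E$ now follows from the Gagliardo characterization
\[\norm{f}_{H^s}^2\asymp\norm{f}_{L^2}^2+\int_{\reals^2}|h|^{-2-2s}\,\norm{f(\cdot+h)-f}_{L^2}^2\,dh\qquad(0<s<1),\]
together with $\norm{\one_E(\cdot+h)-\one_E}_{L^2}^2=|E\triangle(E+h)|\le\min\big(2|E|,\,|h|\,\mathrm{Per}(E)\big)$: the resulting integral $\int_{|h|\le1}|h|^{-1-2s}\,dh+\int_{|h|>1}|h|^{-2-2s}\,dh$ over $\reals^2$ converges precisely because $0<s<\tfrac12$, giving $\norm{g}_{H^s}=O(1)$ uniformly in $\delta$. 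For $b$ it remains to check $\norm{b}_{H^0}^2=|\tilde A\triangle E|=o_\delta(1)$, which via $|\tilde A\triangle E|\le|\tilde A\triangle E^{(1)}|+|E^{(1)}\triangle E|$ and the layer-cake formula reduces to $\norm{\hat\ell^+-\ell^+}_{L^1}+\norm{\ell^--\hat\ell^-}_{L^1}=o_\delta(1)$; equivalently, that the superlevel set $\set{x:\ell^+(x)>s}=\pi(\tilde A\cap\set{y>s})$ differs from an interval by measure $o_\delta(1)$ for all heights $s$ outside an exceptional set of measure $o_\delta(1)$, and symmetrically for $\ell^-$.

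This last estimate is the main obstacle. Applying the analysis of Steps One through Seven, together with Lemma~\ref{lemma:rotations}, to the image of $(A,B)$ under the rotation interchanging the two coordinate axes shows that for all $y$ outside a set of measure $o_\delta(1)$ the horizontal slice $\tilde A^y$ lies within $o_\delta(1)$ of an interval whose endpoints vary in a controlled (nearly affine) way, so that the horizontal truncations $\tilde A\cap\set{y>s}$ are $o_\delta(1)$-close in measure to the corresponding horizontal interval bundles. The subtlety is that $\reals^1$-projections are not continuous under small perturbations of a set in measure, so one cannot pass directly from this closeness to closeness of the projections $\pi(\tilde A\cap\set{y>s})$; the resolution must use the "tallness" of $\tilde A$ — that its vertical slices have measure bounded below on a set of $x$ of measure $\asymp1$, as in the discussion preceding Lemma~\ref{lemma:gotintervals2} — to stabilize the projection. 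This is the one place in the step where genuinely two-dimensional, rather than slice-by-slice, information about $\tilde A$ is required.
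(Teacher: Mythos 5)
Your proposal takes a genuinely different route from the paper, and it has a real gap at precisely the point you flag as ``the main obstacle.''

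The paper's proof does \emph{not} construct a single good set $E$ close to $\tilde A$. Instead it works directly on the Fourier side: it reduces the $H^s$ bound to showing $\int |\widehat{\one_A}(\xi)|^2 \langle\min(|\xi_j|,R(\delta))\rangle^{2s}\,d\xi = O(1)$ separately for $j=1$ and $j=2$, handles $j=2$ via the estimate $\norm{\one_{A^*}(\cdot,\cdot+u)-\one_{A^*}}_{L^2}^2 = O(u)$ coming from the vertical interval bundle of Proposition~\ref{prop:intervalsKL} (each nonempty slice of $A^*$ is an interval, so each slice contributes $\le 2u$ to the squared difference, regardless of how the intervals vary in $x$), and then gets $j=1$ by applying the identical argument to the rotated pair $(A_\scripto,B_\scripto)$, which by Lemma~\ref{lemma:rotations} remains normalized. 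The decomposition $\one_{\tilde A}=g+b$ is then a frequency cutoff at $|\xi|=R(\delta)$. The crucial structural input is only that \emph{separately in each of two directions} the slices are approximately intervals; there is no need for the two interval-bundle structures to be compatible, for the intervals to vary tamely in the transverse variable, or for any single bounded-perimeter approximant to exist.

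Your approach, by contrast, needs a single set $E$ with $|\tilde A\bigtriangleup E|=o_\delta(1)$ and $\mathrm{Per}(E)=O(1)$. This is strictly stronger than what the Fourier argument requires, and the extra strength is exactly where your proof breaks. The estimate $\norm{\hat\ell^+-\ell^+}_{L^1}=o_\delta(1)$ is not a consequence of anything established through Step Nine: the superlevel sets $\{x:\ell^+(x)>s\}=\pi(\tilde A\cap\{y>s\})$ are controlled neither by the vertical structure (which controls $|A_x|$, not $\sup A_x$) nor by the rotated structure alone, for the reason you yourself point out — projections are not continuous in the measure topology. The sentence claiming Step Seven yields horizontal slices ``whose endpoints vary in a controlled (nearly affine) way'' overstates what Lemma~\ref{lemma:intervalbundles} gives: it controls only the centers $\varphi,\psi$ to within $O(1)$ of a common affine function, not the endpoints, and $O(1)$ oscillation of endpoints is already enough to destroy the unimodal-hull estimate. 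The ``tallness'' idea you gesture at is plausible but would require a localization argument in the style of Lemma~\ref{lemma:pre2Dlocalization} applied to horizontal truncations, and none of that is carried out. As written, the step from the two one-directional interval-bundle structures to a bounded-perimeter approximant is an unproven assertion; the paper's route is precisely designed to make that step unnecessary.
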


\begin{proof}
We have already shown the existence of $\scriptl,v$ which place $\tilde A,\tilde B$
inside a fixed bounded region.
We simplify notation for the remainder of the proof for writing $A,B$ instead of $\tilde A,\tilde B$.

It suffices to show that there exists  a function $\delta\mapsto R(\delta)$
such that $R(\delta)\to\infty$ as $\delta\to 0$ and
\[ \int |\widehat{\one_A}(\xi)|^2 \abr{\min(|\xi|,R(\delta))}^{2s}\,d\xi = O(1) \]
uniformly in $\delta,A$.
To prove this, it suffices to bound
\[ \int |\widehat{\one_A}(\xi)|^2 \abr{\min(|\xi_j|,R(\delta))}^{2s}\,d\xi  \]
for each $j\in\set{1,2}$ where $\xi=(\xi_1,\xi_2)\in\reals^2$.
Since the hypotheses of the lemma are invariant under rotations of $\reals^2$, it suffices to treat $j=2$.

We have shown that there exists a decomposition of $A$ as a disjoint union $A = \scripta\cup \scripte$ 
of Lebesgue measurable sets, where $|\scripte|=o_\delta(1)$ and
for each $x\in[-\lambda,\lambda]$, either $|\scripta_x|=0$ or 
there exists an interval $I_x\subset [-\lambda,\lambda]$ such that 
\begin{gather*} \scripta_x \subset I_x 
\\ |I_x\setminus\scripta_x|=o_\delta(1).  \end{gather*}
These intervals can be taken to depend measurably on $x$, so that the associated set
\[A^*=\set{(x,y)\in\reals^{1}\times\reals^1: |\scripta_x|>0 \text{ and } y\in I_x}\] is Lebesgue measurable.

For $0< u\le 1$ and $f\in\lt(\reals^2)$ define
$\Delta_u f(x_1,x_2) = f(x_1,x_2+u)-f(x_1,x_2)$. 
As a consequence of the fibered structure of $A^*$, the function $f=\one_{A^*}$ plainly satisfies
\begin{equation}
\norm{\Delta_u f}_{\lt} = O(u^{1/2}) 
\end{equation}
uniformly for all $u\in(0,1]$, all $\delta\in (0,1]$, and all sets $A^*$ satisfying the above conditions.  
Since $\widehat{\Delta_u f}(\xi_1,\xi_2) = \big(e^{iu\xi_2}-1\big)\widehat{f}(\xi_1,\xi_2)$,
it follows that
\begin{equation} \int_{\reals^2} \big|e^{iu\xi_2}-1\big|^2\,|\widehat{f}(\xi)|^2\,d\xi = O(u)
\end{equation}
and consequently for any $s\ge 0$,
\begin{equation} \int_{|\xi_2|\asymp u^{-1}} |\xi_2|^{2s} |\widehat{f}(\xi)|^2\,d\xi 
= O(u^{1-2s}).  \end{equation}
It follows that
\begin{equation}
\int_{\reals^2} (1+\xi_2^2)^{s} |\widehat{f}(\xi)|^2\,d\xi = O(1)
\end{equation}
for all $s<\tfrac12$.
Since $\norm{f-\one_A}_2=o_\delta(1)$, this implies that
there exists
a function $\delta\mapsto R(\delta)$ which satisfies $\lim_{\delta\to 0}R(\delta)=\infty$ 
such that
\begin{equation} \label{goodfourierbound} \int_{\reals^2} \langle \min(|\xi_2|,R(\delta))\rangle^{2s}
|\widehat{\one_A}(\xi)|^2\,d\xi = O(1),  \end{equation}
uniformly for all $\delta\in(0,1]$.
\end{proof}

\begin{corollary}
Let $(A_\nu,B_\nu)$ be a sequence of ordered pairs of Lebesgue measurable subsets of $\reals^2$
such that $|A_\nu|\to 1$, $\big|B_\nu|\to 1$, and
$|t A_\nu+(1-t) B_\nu|\to 1$ as $\nu\to\infty$.
Then there exists a sequence of Lebesgue measure-preserving affine transformations $\Phi_\nu$ of $\reals^2$ such that 
the two sequences $(\one_{\Phi_\nu(A_\nu)})$ and $(\one_{\Phi_\nu(B_\nu)})$ 
of indicator functions are precompact in $\lt(\reals^2)$.
\end{corollary}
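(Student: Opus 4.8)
The plan is to deduce the Corollary from Lemma~\ref{lemma:FT} together with a standard compactness argument in $\lt$. First I would fix $s\in(0,\tfrac12)$ and, for each $\nu$, set $\delta_\nu = \max\big(\,\big||A_\nu|-1\big|,\ \big||B_\nu|-1\big|,\ \big||tA_\nu+(1-t)B_\nu|-1\big|\,\big)$, which tends to $0$ by hypothesis. Then for $\nu$ large the pair $(A_\nu,B_\nu)$ satisfies the hypotheses \eqref{hypos} with $\delta = \delta_\nu$, so Lemma~\ref{lemma:FT} supplies measure-preserving affine automorphisms $\scriptl_\nu$ and vectors $v_\nu$ so that, writing $\Phi_\nu = \scriptl_\nu$ and $\tilde A_\nu = \Phi_\nu(A_\nu)$, $\tilde B_\nu = \Phi_\nu(A_\nu)+v_\nu$, both $\tilde A_\nu$ and $\tilde B_\nu$ lie in the fixed bounded set $\scriptr$, and one has decompositions $\one_{\tilde A_\nu} = g_\nu + b_\nu$ with $\norm{g_\nu}_{H^s} = O(1)$ and $\norm{b_\nu}_{H^0} = o_{\delta_\nu}(1) \to 0$, and similarly for $\tilde B_\nu$. (To have a single sequence $\Phi_\nu$ as in the statement, one absorbs the translation $v_\nu$ into the affine map used for $B_\nu$; since the corollary allows separate — indeed any — measure-preserving affine transformations this is harmless. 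If one insists on literally the same $\Phi_\nu$ for both sets, one simply notes that $\one_{\tilde B_\nu}$ and $\one_{\Phi_\nu(B_\nu)} = \one_{\tilde B_\nu - v_\nu}$ are translates of each other, and precompactness is translation-invariant once the $v_\nu$ are bounded, which they are since both $\Phi_\nu(B_\nu)$ and $\tilde B_\nu = \Phi_\nu(B_\nu)+v_\nu$ meet the fixed ball $\scriptb$ in sets of measure $\gtrsim 1$.)

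Next I would invoke the compact embedding $H^s(\scriptr') \hookrightarrow \lt(\scriptr')$ for a fixed bounded open set $\scriptr' \supset \scriptr$ (Rellich--Kondrachov). The functions $g_\nu$ need not be supported in $\scriptr$, but one can replace $g_\nu$ by $\chi g_\nu$ for a fixed smooth cutoff $\chi$ equal to $1$ on $\scriptr$ and supported in $\scriptr'$: multiplication by $\chi$ is bounded on $H^s$ for $0 \le s < \tfrac12$ (indeed for all $s \ge 0$ when $\chi$ is smooth and compactly supported, but only the low-regularity range is needed and is elementary via the difference-quotient characterization used in the proof of Lemma~\ref{lemma:FT}), so $\norm{\chi g_\nu}_{H^s} = O(1)$ as well, and $\one_{\tilde A_\nu} = \chi g_\nu + \chi b_\nu$ since $\one_{\tilde A_\nu}$ is supported in $\scriptr$ where $\chi \equiv 1$; here $\norm{\chi b_\nu}_{\lt} \le \norm{b_\nu}_{\lt} \to 0$. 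By the compact embedding, after passing to a subsequence the bounded sequence $(\chi g_\nu)$ converges in $\lt$; since $\norm{\chi b_\nu}_{\lt} \to 0$, the sequence $(\one_{\tilde A_\nu})$ converges in $\lt$ along that subsequence. Applying the same argument to $(\one_{\tilde B_\nu})$ and extracting a further subsequence, both sequences converge in $\lt$. Finally, a set $K \subset \lt$ is precompact iff every sequence in $K$ has a convergent subsequence; running the above with an arbitrary subsequence of the original sequence shows that $(\one_{\tilde A_\nu})$ and $(\one_{\tilde B_\nu})$ are precompact in $\lt(\reals^2)$.

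There is essentially no new obstacle here: the analytic content is entirely contained in Lemma~\ref{lemma:FT}, which converts near-equality into a uniform fractional Sobolev bound modulo an $\lt$-small error, and the Corollary is its packaging via Rellich--Kondrachov. The one point requiring a little care is the bookkeeping of affine transformations and translations — making sure the maps $\Phi_\nu$ and the vectors $v_\nu$ produced by Lemma~\ref{lemma:FT} can be arranged into the single normalization demanded by the statement, and that the cutoff $\chi$ may be chosen once and for all independently of $\nu$ because the containing set $\scriptr$ is fixed. Both are immediate from the uniformity already built into Lemma~\ref{lemma:FT}. If one wishes to be completely careful about the case $s=0$ not being useful, note that any fixed $s\in(0,\tfrac12)$ works and the embedding $H^s(\scriptr')\hookrightarrow\lt(\scriptr')$ is compact for every such $s$.
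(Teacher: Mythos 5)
Your argument follows the paper's route exactly: apply Lemma~\ref{lemma:FT} (together with the normalization it provides, which rests on Lemma~\ref{lemma:rotations}) to obtain the decomposition $\one_{\tilde A_\nu} = g_\nu + b_\nu$ with $\norm{g_\nu}_{H^s}=O(1)$, $\norm{b_\nu}_{L^2}\to 0$, and support control in the fixed bounded set $\scriptr$; then invoke Rellich--Kondrachov. Your cutoff-by-$\chi$ step, and the observation that adding an $L^2$-null sequence does not destroy precompactness, are exactly the bookkeeping the paper leaves implicit in the phrase ``the conclusion then follows from Rellich's Lemma.''

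There is, however, one error in your handling of the translation vectors $v_\nu$. You claim they are bounded ``since both $\Phi_\nu(B_\nu)$ and $\tilde B_\nu=\Phi_\nu(B_\nu)+v_\nu$ meet the fixed ball $\scriptb$ in sets of measure $\gtrsim 1$.'' Only the second of these is guaranteed by Lemma~\ref{lemma:FT}; nothing places $\scriptl_\nu(B_\nu)$ itself near $\scriptb$. In fact the $v_\nu$ need not be bounded: take $A_\nu$ a fixed disc of unit measure and $B_\nu = A_\nu + z_\nu$ with $|z_\nu|\to\infty$. The hypotheses hold with $\delta_\nu=0$, yet no single sequence of measure-preserving affine maps $\Phi_\nu$ can make both $(\one_{\Phi_\nu(A_\nu)})$ and $(\one_{\Phi_\nu(B_\nu)})$ precompact in $L^2$: a volume-preserving linear part that contracts $z_\nu$ to a bounded vector must stretch orthogonally, sending $\Phi_\nu(A_\nu)$ off to infinity. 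So the ``literal single $\Phi_\nu$'' fix you offer does not work, and your first reading is the correct one: the corollary must be understood as allowing an independent translation of $B_\nu$, which is what Lemma~\ref{lemma:FT} actually supplies and what the downstream use (Corollary~\ref{cor:gotoconvex}, whose conclusion carries a translate $B_\nu - c$) requires. With that reading, your proof is correct and matches the paper's.
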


\begin{proof} We have already shown in Lemma~\ref{lemma:rotations} that 
there exists $\lambda<\infty$ such that
$\Phi_\nu$ can be chosen so that 
$|[-\lambda,\lambda]^2\setminus \Phi_\nu(A_\nu)|\to 0$ 
and likewise 
$|[-\lambda,\lambda]^2\setminus \Phi_\nu(B_\nu)|\to 0$. 
Lemma~\ref{lemma:FT} can then be applied to the
intersections of $\Phi_\nu(A_\nu)$ and $\Phi_\nu(B_\nu)$ with $[-\lambda,\lambda]^2$.
The conclusion then follows from Rellich's Lemma.  \end{proof}

\section{Step Ten: Properties of limits}

Let $(A_\nu,B_\nu) $ be ordered pairs of compact subsets of $\reals^2$
satisfying $|A_\nu|\to 1$, $|B_\nu|\to 1$, and
$|t A_\nu+(1-t) B_\nu|\to 1$ as $\nu\to\infty$. Suppose furthermore
that there exist $F,G\in\lt(\reals^2)$ such that $\one_{A_\nu}\to F$ and $\one_{B_\nu}\to G$
in $\lt(\reals^2)$ norm.  It is elementary that there exist measurable sets $A,B$ 
such that $F=\one_A$ and $G=\one_B$ Lebesgue almost everywhere, and that $|A|=|B|=1$.

Replace each of $A,B$ by the set of all of its Lebesgue points. Every point of each of
these modified sets is a Lebesgue point of that modified set.

\begin{lemma} The pair $(A,B)$ of limiting sets defined above achieves equality 
in the Brunn-Minkowski inequality; that is, 
\begin{equation} |t A+(1-t) B| = 1.  \end{equation} \end{lemma}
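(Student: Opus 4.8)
The plan is to show that $|tA+(1-t)B|$ cannot exceed $1$, since the reverse inequality $|tA+(1-t)B|\ge|A|^t|B|^{1-t}=1$ is the Brunn-Minkowski inequality itself. The obstacle is that $\lt$-convergence of indicator functions does not, by itself, control sumsets: a priori the sumset $tA_\nu+(1-t)B_\nu$ could shrink badly in the limit. So the point is to transfer the smallness of $|tA_\nu+(1-t)B_\nu|$ to a smallness statement about $|tA+(1-t)B|$, using that $\one_{A_\nu}\to\one_A$ and $\one_{B_\nu}\to\one_B$ in $\lt$, hence (along a subsequence) pointwise a.e., and that by the reduction in Step Nine all the sets live inside a fixed bounded region $\scriptr$.

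The key steps, in order, are as follows. First I would fix $\eps>0$ and use inner regularity together with the hypothesis that every point of $A$ is a Lebesgue point of $A$ to choose a compact set $K\subset A$ and a compact set $L\subset B$ with $|A\setminus K|<\eps$ and $|B\setminus L|<\eps$, arranged so that for every $z\in K$ one has $|A\cap B(z,r)|/|B(z,r)|\to 1$, and similarly on $L$. Second, using $\one_{A_\nu}\to\one_A$ in $\lt$, for each fixed small radius $r$ one gets $\int_{B(z,r)}\one_{A_\nu}\to\int_{B(z,r)}\one_A$; combined with the Lebesgue-point property this forces, for $\nu$ large (depending on $r$ and on a compactness/covering argument over $K$), that $A_\nu\cap B(z,r/2)\ne\emptyset$ for every $z\in K$, and likewise $B_\nu\cap B(w,r/2)\ne\emptyset$ for every $w\in L$. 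Third, for $z\in K$ and $w\in L$, the point $tz+(1-t)w$ then lies within distance $r/2$ of $tA_\nu+(1-t)B_\nu$, so $tK+(1-t)L$ is contained in the $r/2$-neighborhood of $tA_\nu+(1-t)B_\nu$; taking Lebesgue measure and letting $\nu\to\infty$ gives $|tK+(1-t)L|\le \limsup_\nu |tA_\nu+(1-t)B_\nu| + C r = 1 + Cr$, where the $Cr$ accounts for the neighborhood inflation and is uniform because everything sits in the bounded set $\scriptr$. Fourth, letting $r\to 0$ gives $|tK+(1-t)L|\le 1$, and since $tK+(1-t)L\subset tA+(1-t)B$ with $|A\setminus K|,|B\setminus L|<\eps$ arbitrarily small — and $tA+(1-t)B$ differs from $tK+(1-t)L$ by a set whose measure is controlled by the measure of $tK+(1-t)(B\setminus L)\cup t(A\setminus K)+(1-t)B$, each of which is $O(\eps^{?})$ since all sets lie in a fixed ball, so bounded by a fixed constant times a power of $\eps$ — one concludes $|tA+(1-t)B|\le 1+o_\eps(1)$, hence $|tA+(1-t)B|\le 1$.

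Combining with Brunn-Minkowski, $|tA+(1-t)B|=1$, which is the assertion.

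The main obstacle I expect is the third and fourth steps: making rigorous, with uniform constants, the claim that $\lt$-convergence plus the Lebesgue-point normalization yields ``$A_\nu$ meets every small ball centered in $K$ for all large $\nu$'' simultaneously over the compact set $K$ — this needs a covering argument and a uniform modulus coming from the fact that, by passing to a subsequence, $\one_{A_\nu}\to\one_A$ also in the sense that $\sup_{z\in K}\big| |A_\nu\cap B(z,r)| - |A\cap B(z,r)| \big|\to 0$ for each fixed $r$ (which follows from $\lt$ convergence and equicontinuity of $z\mapsto\one_{B(z,r)}$ in $\lt$) — together with keeping track that the neighborhood-inflation error $Cr$ does not interact badly with the truncation error in $\eps$. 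An alternative, cleaner route that avoids some of this is to argue by contradiction: if $|tA+(1-t)B|>1$, pick finitely many points $a_1,\dots,a_m\in A$ and $b_1,\dots,b_n\in B$ (all Lebesgue points) such that the convex-combination points $\{ta_i+(1-t)b_j\}$ together with small balls around them already force $|tA+(1-t)B|>1$; approximate each $a_i$ by a point of $A_\nu$ and each $b_j$ by a point of $B_\nu$ using $\lt$-convergence and the Lebesgue-point property at the finitely many points $a_i,b_j$ only; deduce $|tA_\nu+(1-t)B_\nu|>1$ for large $\nu$, contradicting $|tA_\nu+(1-t)B_\nu|\to 1$. I would present the proof along these lines.
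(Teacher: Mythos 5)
There is a genuine gap in the main route, and the ``contradiction'' variant does not clearly avoid it.

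The third step claims that from $tK+(1-t)L\subset\big(tA_\nu+(1-t)B_\nu\big)+B(0,r/2)$ one may pass to $|tK+(1-t)L|\le |tA_\nu+(1-t)B_\nu|+Cr$ with $C$ depending only on the fixed bounded region. That inference is false: for a general (compact) set $S$ inside a fixed ball, $|S+B(0,r/2)|$ is not controlled by $|S|+Cr$ --- think of $S$ a union of many tiny balls spread out. Nothing established so far gives $tA_\nu+(1-t)B_\nu$ the kind of regularity (bounded perimeter, say) that would make the inflation $O(r)$ uniformly in $\nu$. One also cannot fix this by sending $\nu\to\infty$ before $r\to 0$, since the required identity $|S_\nu+B(0,r/2)|\to|S_\nu|$ as $r\to 0$ holds only at fixed $\nu$.

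The fourth step has a second, independent gap. Passing from $|tK+(1-t)L|\le 1$ to $|tA+(1-t)B|\le 1+o_\eps(1)$ requires bounding terms such as $|tK+(1-t)(B\setminus L)|$. These are \emph{not} small when $|B\setminus L|$ is small: Minkowski sums are not stable under small-measure perturbations of a summand. If $B\setminus L$ is, say, a tiny ball disjoint from $L$, then $|tK+(1-t)(B\setminus L)|$ is already comparable to $t^2|K|\approx t^2$. The question mark in your ``$O(\eps^{?})$'' is well placed: no positive power of $\eps$ works. The ``contradiction'' alternative runs into the same obstacle in disguise: small balls around the finitely many points $ta_i+(1-t)b_j$ are not contained in $tA+(1-t)B$, so their total measure exceeding $1$ forces nothing about $|tA+(1-t)B|$.

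The paper sidesteps both issues by working with the convolution $f=\one_{tA}*\one_{(1-t)B}$ rather than the sumset directly. Convolution, unlike the Minkowski sum, \emph{is} stable under $L^1$ perturbations of the summands: $\|\one_{tA_\nu}*\one_{(1-t)B_\nu}-f\|_\infty\to 0$. Since every point of $A$ and of $B$ is a density point, every $x\in tA+(1-t)B$ satisfies $f(x)>0$. For $\eps>0$, uniform convergence gives $\{f>\eps\}\subset\{\one_{tA_\nu}*\one_{(1-t)B_\nu}>0\}\subset tA_\nu+(1-t)B_\nu$ for $\nu$ large, hence $|\{f>\eps\}|\le 1$; letting $\eps\downarrow 0$ yields $|tA+(1-t)B|\le|\{f>0\}|\le 1$. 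This packages your Lebesgue-point intuition in a way that avoids the uncontrolled neighborhood and truncation errors.
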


\begin{proof}
Set $\tilde A=t A$, $\tilde B=(1-t) B$, and $E=t A+(1-t) B=\tilde A+\tilde B$.
Likewise set $\tilde A_\nu = t A_\nu$ and $\tilde B_\nu = (1-t) B_\nu$.

Define the continuous function $f$ on $\reals^2$ by $f=\one_{\tilde A}*\one_{\tilde B}$.
Each point of $E$ is a Lebesgue point of $E$, and moreover, $f$ is strictly positive at each such point.
Therefore it suffices to prove that for each $\eps>0$,
$E_\eps=\set{x\in E: f(x)>\eps}$ satisfies $|E_\eps|\le 1$.

For any $x\in\reals^2$ and any index $\nu$,
\begin{equation}
\Big|\big(\one_{\tilde A}*\one_{\tilde B}\big)(x)
- \big(\one_{\tilde A_\nu}*\one_{\tilde B_\nu}\big)(x)\Big|
\le C\norm{\one_{A_\nu}-\one_A}_1 + C\norm{\one_{B_\nu}-\one_B}_1,
\end{equation}
which tends to zero as $\nu\to\infty$.
Therefore for any $\eps>0$ 
\[ E_\eps\subset \set{x: \big(\one_{\tilde A_\nu}*\one_{\tilde B_\nu}\big)(x)>\eps}
\subset \tilde A_\nu+\tilde B_\nu\ \text{ for all sufficiently large $\nu$} \]
and therefore
\[ |E_\eps| \le |\tilde A_\nu+\tilde B_\nu|\le 1+o_\nu(1). \]
Taking the limit as $\nu\to\infty$ gives $|E_\eps|\le 1$.
\end{proof}

\begin{corollary} \label{cor:gotoconvex}
Let $(A_\nu,B_\nu)$ be a sequence of ordered pairs of compact subsets of $\reals^2$
satisfying $|A_\nu|\to 1$, $|B_\nu|\to 1$, and $|tA_\nu+(1-t) B_\nu|\to 1$ as $\nu\to\infty$.
Suppose also that the sequences $(\one_{A_\nu})$ and $(\one_{B_\nu})$ are convergent in $\lt(\reals^2)$.
Then there exist a compact convex set $\scriptc\subset\reals^2$ and $c\in\reals^2$ such that
\[ |\scriptc \bigtriangleup A_\nu|+|\scriptc\bigtriangleup (B_\nu-c)|\to 0.  \]
\end{corollary}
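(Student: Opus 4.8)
The plan is to read the corollary off from the preceding lemma together with the classical characterization of the equality case in the Brunn--Minkowski inequality quoted in the introduction; all of the substantive analysis has already been carried out in Steps One through Ten, and what remains is bookkeeping.

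First I would pass to the limit. By hypothesis $\one_{A_\nu}\to F$ and $\one_{B_\nu}\to G$ in $\lt(\reals^2)$ for some $F,G$, and, as noted in the discussion preceding the last lemma, there are measurable sets $A,B$ with $F=\one_A$ and $G=\one_B$ almost everywhere and $|A|=|B|=1$. The preceding lemma then yields $|tA+(1-t)B|=1$, i.e.\ $(A,B)$ realizes equality in \eqref{eq:BMmult}. I would then invoke the characterization of \cite{henstock}, \cite{HO}: there are a compact convex set $\scriptk\subset\reals^2$, scalars $\alpha,\beta\ge 0$, and vectors $u,v\in\reals^2$ with $A\subset\alpha\scriptk+u$, $B\subset\beta\scriptk+v$, and $|(\alpha\scriptk+u)\setminus A|=|(\beta\scriptk+v)\setminus B|=0$. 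Since $|A|=|B|=1>0$ we must have $|\scriptk|>0$ and $\alpha,\beta>0$, and since $d=2$ the identity $\alpha^2|\scriptk|=|A|=1=|B|=\beta^2|\scriptk|$ forces $\alpha=\beta$. Put $\scriptc=\alpha\scriptk+u$, which is compact and convex, and $c=v-u$; then $\scriptc$ coincides with $A$ up to a null set and $\scriptc+c=\beta\scriptk+v$ coincides with $B$ up to a null set, so $\one_\scriptc=F$ and $\one_{\scriptc+c}=G$ almost everywhere.

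Finally I would convert back to symmetric differences. Because $|\one_{A_\nu}-\one_\scriptc|$ takes only the values $0$ and $1$, one has $|A_\nu\bigtriangleup\scriptc|=\norm{\one_{A_\nu}-\one_\scriptc}_{\lt}^2=\norm{\one_{A_\nu}-F}_{\lt}^2\to 0$, and, similarly, using translation invariance of Lebesgue measure, $|\scriptc\bigtriangleup(B_\nu-c)|=|(\scriptc+c)\bigtriangleup B_\nu|=\norm{\one_{B_\nu}-\one_{\scriptc+c}}_{\lt}^2=\norm{\one_{B_\nu}-G}_{\lt}^2\to 0$. Adding the two estimates gives the claim. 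There is no genuine obstacle here: the only two points needing a word of care are that $\lt$ convergence of indicator functions is the same as convergence of the corresponding sets in symmetric difference, and that the two homothety ratios must agree because the limiting sets have equal positive measure (which is also what guarantees $\scriptk$ is genuinely two-dimensional, so that $\scriptc$ is a bona fide compact convex set rather than a point or a segment).
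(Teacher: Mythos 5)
Your proof is correct and matches the paper's argument: pass to the $L^2$ limit to get measurable sets $A,B$ of measure $1$, apply the preceding lemma to see that $(A,B)$ attains equality in Brunn--Minkowski, invoke the known equality characterization to obtain a homothetic pair of convex sets coinciding with $A,B$ up to null sets, note the equal measures force the homothety ratio to be $1$, and translate $L^2$ convergence of indicators into convergence of symmetric differences. The only cosmetic difference is that you spell out the computation $\alpha^2|\scriptk|=\beta^2|\scriptk|$ where the paper simply says that equal-volume homothetic convex sets are translates of one another.
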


\begin{proof}
We have shown that there exist
Lebesgue measurable sets $A,B$ such that $\one_{A_\nu}\to\one_A$
and $\one_{B_\nu}\to\one_B$ in $L^2$ norm, $A$ coincides with the set of all of its Lebesgue
points, likewise for $B$, and $|A|=|B|=1$. We have shown further that 
$(A,B)$ attains equality in the Brunn-Minkowski inequality.
It is known \cite{gardner} that equality is attained if and only if there exists a homothetic pair of compact convex sets
$A^*,B^*$ such that $A\subset A^*$, $B\subset B^*$, and $|A^*\setminus A|=|B^*\setminus B|=0$.
Since $|A|=|B|=1$, $|A^*|=|B^*|=1$. Therefore $B^*$ is a translate of $A^*$.
That $|A^*\bigtriangleup A_\nu|\to 0$ is merely a restatement of the hypothesis
that $\one_{A_\nu}\to\one_A$ in $L^2$ norm, and likewise for $B^*,B_\nu$.
\end{proof}

There exist constants $0<r_0<r_1<\infty$ such that for any convex set $\scriptc\subset\reals^2$
there exists a measure-preserving affine automorphism $L$ of $\reals^2$
such that $\scriptb_0\subset L(\scriptc)\subset \scriptb_1$, where $\scriptb_i$ denotes the ball in $\reals^2$
centered at $0$ with radius $r_i$. 
The hypotheses and conclusion are invariant under affine changes of variables. 
Therefore by replacing each of $A_\nu,B_\nu$ by its image under $L$,
we may assume that the convex set $\scriptc$ in the conclusions of Corollary~\ref{cor:gotoconvex}
satisfies $\scriptb_0\subset \scriptc\subset \scriptb_1$.
Moreover, since $A_\nu,B_\nu$ can be independently translated without altering hypotheses or conclusion,
we may translate $B_\nu$ to ensure that also $|B_\nu\bigtriangleup \scriptc|\to 0$ as $\nu\to\infty$.

\begin{lemma}  \label{lemma:smalldistancetoK}
Let $\scriptr\subset\reals^2$ be a bounded set.
Let $(A_\nu,B_\nu)$ be a sequence of pairs of Borel subsets of $\reals^2$
such that $|A_\nu|\to 1$, $|B_\nu|\to 1$, and $|tA_\nu+(1-t)B_\nu|\to 1$ as $\nu\to\infty$.
Suppose further that $\scriptc_\nu\subset\scriptr$ are convex compact sets
such that $|A_\nu\bigtriangleup \scriptc_\nu|\to 0$
and $|B_\nu\bigtriangleup \scriptc_\nu|\to 0$.
Then
\begin{equation} \distance(A_\nu,\scriptc_\nu)\to 0 \text{ as } \nu\to\infty.  \end{equation}
The same conclusion holds for $B_\nu$. 
\end{lemma}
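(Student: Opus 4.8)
The plan is to argue by contradiction. Suppose the conclusion fails: then, passing to a subsequence, there exist a fixed $\epsilon_0>0$ and points $a_\nu \in A_\nu$ with $\distance(a_\nu, \scriptc_\nu) \geq \epsilon_0$ for all $\nu$. Since $A_\nu \subset \scriptr$ and $\scriptc_\nu \subset \scriptr$ with $\scriptr$ bounded, and since the convex sets $\scriptc_\nu$ all lie in a fixed bounded region, we may pass to a further subsequence so that $a_\nu \to a_\infty$ and (by the Blaschke selection theorem) $\scriptc_\nu \to \scriptc_\infty$ in the Hausdorff metric, where $\scriptc_\infty$ is a convex compact set with $\distance(a_\infty, \scriptc_\infty) \geq \epsilon_0$. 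Here we should note that the hypothesis $|A_\nu \bigtriangleup \scriptc_\nu| \to 0$ does not by itself forbid $\scriptc_\infty$ from being degenerate (of measure zero); but since $|\scriptc_\nu| \to |A_\infty| = 1$ by the convergence of indicator functions, in fact $|\scriptc_\infty| = 1$, so $\scriptc_\infty$ has nonempty interior.

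The key geometric step is then to exploit the outlier point $a_\infty$ together with the bulk of the mass of $B$. Since $|B_\nu \bigtriangleup \scriptc_\nu| \to 0$ and $\scriptc_\nu \to \scriptc_\infty$, the sets $B_\nu$ have nearly all their mass, at least $1-o_\nu(1)$, inside any fixed open neighborhood of $\scriptc_\infty$; in particular for a small fixed ball $\scriptb'$ contained in the interior of $\scriptc_\infty$ we have $|B_\nu \cap \scriptb'| \geq c > 0$ for all large $\nu$. Now $tA_\nu + (1-t)B_\nu$ contains both $t(A_\nu \cap \scriptb'') + (1-t)(B_\nu \cap \scriptb'')$, which has measure at least $(1-o_\nu(1))$ by a Brunn--Minkowski lower bound on the bulk (using that $|A_\nu \cap \scriptb''|, |B_\nu \cap \scriptb''| \to 1$ for a suitable fixed ball $\scriptb''$), and also the translated copy $\{t a_\nu\} + (1-t)(B_\nu \cap \scriptb')$, which has measure $(1-t)^2 |B_\nu \cap \scriptb'| \geq (1-t)^2 c$. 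The point is that, because $a_\infty$ is at distance $\geq \epsilon_0$ from $\scriptc_\infty$ while $B_\nu \cap \scriptb'$ sits well inside $\scriptc_\infty$, the set $ta_\nu + (1-t)(B_\nu \cap \scriptb')$ is, for all large $\nu$, disjoint from the bulk set $t(A_\nu \cap \scriptb'') + (1-t)(B_\nu \cap \scriptb'')$: one checks that the former lies in a small ball around $ta_\infty + (1-t)(\text{center of }\scriptb')$, which is separated from $\scriptc_\infty$'s convex hull image by a positive distance depending only on $\epsilon_0$, $t$, and the geometry. Consequently
\[
|tA_\nu + (1-t)B_\nu| \geq 1 - o_\nu(1) + (1-t)^2 c,
\]
contradicting $|tA_\nu + (1-t)B_\nu| \to 1$ once $\nu$ is large.

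I expect the main obstacle to be the disjointness verification: one must choose the auxiliary balls $\scriptb'$ (inside $\scriptc_\infty$) and $\scriptb''$ (a large ball capturing the bulk of $A_\nu$ and $B_\nu$) carefully and quantitatively, so that the displaced copy $ta_\nu + (1-t)(B_\nu \cap \scriptb')$ genuinely misses $t(A_\nu \cap \scriptb'') + (1-t)(B_\nu \cap \scriptb'')$ for all sufficiently large $\nu$. The subtlety is that the bulk sumset can spread somewhat beyond $\scriptc_\infty$, so the separation argument needs the distance $\epsilon_0$ to dominate the amount of spreading, which it does after rescaling by $t$; making this precise requires keeping track of how far outside $\scriptc_\infty$ the sets $A_\nu, B_\nu$ can have mass, which is controlled by $|A_\nu \bigtriangleup \scriptc_\nu| \to 0$ and Hausdorff convergence of $\scriptc_\nu$. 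A secondary, more routine point is justifying that the limiting convex set $\scriptc_\infty$ has full measure $1$ and hence nonempty interior, which follows from $|\scriptc_\nu| = |A_\nu| + o_\nu(1) \to 1$.
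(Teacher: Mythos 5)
The high-level structure of your argument — proof by contradiction, producing an outlying point of $A_\nu$, and extracting extra measure for the sumset by translating part of $B_\nu$ by that outlying point — is the same as the paper's. You correctly identify that the key difficulty is the disjointness verification. However, the specific strategy you propose for that disjointness does not work, and the paper's choices at exactly these two places are the real content of the proof.

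First, the bulk contribution: you take $t(A_\nu\cap\scriptb'')+(1-t)(B_\nu\cap\scriptb'')$ for a large fixed ball $\scriptb''$. This set has measure $\ge 1-o_\nu(1)$, but it is not confined to any useful region, because $\scriptb''$ is big and $A_\nu,B_\nu$ may have mass anywhere inside it. The paper instead intersects with the convex set itself, using $t(A_\nu\cap\scriptc_\nu)+(1-t)(B_\nu\cap\scriptc_\nu)$; by convexity of $\scriptc_\nu$ this sumset is \emph{contained in} $\scriptc_\nu$, so its measure $\ge 1-o_\nu(1)$ is realized inside $\scriptc_\nu$. That containment is what makes a clean separation possible.

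Second, the displaced part: you propose a small ball $\scriptb'$ placed anywhere in the interior of $\scriptc_\infty$ and claim $ta_\nu+(1-t)(B_\nu\cap\scriptb')$ is separated from the bulk sumset. This is false in general. Let $w$ be the closest point of $\scriptc_\nu$ to $z=a_\nu$ and $v$ the outward unit normal of the supporting hyperplane at $w$; then $\langle v,z\rangle=\langle v,w\rangle+\rho$. For $b$ with $\langle v,b\rangle=m$ one has $\langle v, tz+(1-t)b\rangle - \langle v,w\rangle = t\rho - (1-t)(\langle v,w\rangle - m)$, which is negative (so the translated point lies back \emph{inside} the half-space containing $\scriptc_\nu$, potentially inside $\scriptc_\nu$ itself) whenever $\langle v,w\rangle - m > t(1-t)^{-1}\rho$. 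A ball $\scriptb'$ placed in the interior of $\scriptc_\infty$ on the far side from $w$ will violate this. So your appeal to Hausdorff convergence and ``separated by a positive distance'' does not hold without further constraints on the location of $\scriptb'$. The paper's solution is to take not a ball but the slab $\tilde\scriptc_\nu=\{x\in\scriptc_\nu:\langle v,x\rangle\ge\langle v,w\rangle - t(1-t)^{-1}\rho\}$; for $b$ in this slab one gets $\langle v,tz+(1-t)b\rangle>\langle v,w\rangle$, so $tz+(1-t)\tilde B$ lies strictly outside $\scriptc_\nu$ and is genuinely disjoint from the bulk part of the sumset (which, by the first point, lives inside $\scriptc_\nu$). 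The lower bound on $|\tilde\scriptc_\nu|$ then comes from convexity (the convex hull of $w$ and an inner John ball), not from Hausdorff convergence.

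Your use of Blaschke selection in place of the paper's John's theorem is a harmless technical variant for getting uniform geometric control of the $\scriptc_\nu$. But the two missing ideas above — confine the bulk sumset to $\scriptc_\nu$ by restricting the factors to $\scriptc_\nu$, and choose the displaced portion of $B_\nu$ in a slab near the supporting hyperplane at the foot of the perpendicular from $z_\nu$ — constitute a genuine gap: as written, your disjointness claim fails and the argument does not close.
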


\begin{proof}
We argue by contradiction.
Let $\rho>1$, and assume that there exist $\eta>0$ and infinitely many indices $\nu$
and points $z_\nu\in A_\nu$ such that $\distance(z_\nu,\scriptc_\nu)\ge \eta>0$.
By passing to a subsequence, we may assume that this happens for every $\nu$.

Since $|\scriptc_\nu|$ is bounded below and its diameter is bounded above, uniformly in $\nu$,
John's theorem guarantees that there exist $r,r'\in\reals^+$ and $c_\nu\in\reals^2$
such that for all sufficiently large $\nu$, $B(c_\nu,r)\subset\scriptc_\nu\subset B(c_\nu,r')$,
and the sequence $c_\nu$ is bounded.

Consider any index $\nu$ and the associated sets $A_\nu,B_\nu,\scriptc_\nu$ and point $z=z_\nu$.
Let $\rho\ge\eta$ be the distance from $z$ to $\scriptc_\nu$.
Consider the closed ball $\scriptb = B(z,\rho)$. Since $\scriptc_\nu$ is convex and $z\notin\scriptc_\nu$,
$\scriptb$ meets $\scriptc_\nu$ at a single point, $w$. The hyperplane tangent to $\scriptb$ at $w$
is a supporting hyperplane for $\scriptc_\nu$; there exists a unit vector $v\in\reals^2$ 
perpendicular to this hyperplane such that $\langle v,x\rangle\le \langle v,w\rangle$ for every $x\in\scriptc_\nu$, 
and $\langle v,z\rangle=\langle v,w\rangle + \rho$.

By the Brunn-Minkowski inequality, \[|t(A_\nu\cap\scriptc_\nu)+(1-t)(B_\nu\cap\scriptc_\nu)|\ge 1-o_\nu(1)\]
since $|A_\nu\cap\scriptc_\nu|,|B_\nu\cap\scriptc_\nu|=1+o_\nu(1)$. Since $\scriptc$ is convex,
this bound is equivalent to
\[|\big(tA_\nu+(1-t)B_\nu\big)\cap \scriptc_\nu|\ge 1-o_\nu(1).\]

Consider the sets 
\begin{align*}
\tilde \scriptc_\nu &= \set{x\in \scriptc_\nu:  \langle v,x\rangle \ge \langle v,w\rangle -t(1-t)^{-1}\rho }  
\\ \tilde B&= \tilde \scriptc_\nu\cap B_\nu.  \end{align*}
If $b\in \tilde B$ then
\begin{align*}
\langle v,\,tz+(1-t)b\rangle 
&= t(\langle v,w\rangle+\rho) +(1-t)\langle v,b\rangle
\\
&> t\langle v,w\rangle+t\rho +(1-t)\big(\langle v,w\rangle-t(1-t)^{-1}\rho\big)
\\ & \ge  \langle v,w\rangle. 
\end{align*}
Therefore $tz+(1-t)\tilde B$ is disjoint from $(tA+(1-t)B_\nu)\cap\scriptc_\nu$, so 
\begin{align*}
|tA_\nu+(1-t)B_\nu| &\ge 1-o_\nu(1)+|tz+(1-t)\tilde B| 
\\
&\ge 1-o_\nu(1) + |tz+(1-t)\tilde\scriptc_\nu| - (1-t)^2|\scriptc_\nu\setminus B_\nu|
\\
&\ge 1-o_\nu(1) + (1-t)^2|\tilde\scriptc_\nu| 
\end{align*}

Because $\scriptc_\nu$ contains both $w$ and $B(0,r)$,
it contains their convex hull. It is elementary that the Lebesgue measure of the set of points $x$ in this
convex hull which satisfy $\langle v,x\rangle\ge -\tau$ is $\ge c\tau^2$ for any sufficiently small $\tau>0$.
Therefore $(1-t)^2|\tilde\scriptc_\nu|$ is bounded below by a certain positive constant, which depends on $\rho,t$
but is independent of $\nu$. 
Therefore $|tA_\nu+(1-t)B_\nu|$ cannot tend to $1$ as $\nu\to\infty$, which is a contradiction.
\end{proof}

\begin{proof}[Conclusion of proof of Theorem~\ref{thm:BMmult}]
We have proved that for any compact set $\Lambda\subset(0,1)$
there exists a function $\delta\mapsto \rho(\delta)$
satisfying $\lim_{\delta\to 0}\rho(\delta)=0$,
with the following property. For any $t\in\Lambda$, for any compact sets $A,B\subset\reals^2$ 
which satisfy  $|tA+(1-t)B|<|A|^t|B|^{1-t}+\delta$ and $|A|,|B|=1+O(\delta)$,
there exist a compact convex set $\scriptc$ satisfying $|\scriptc|=1$,
points $u,v\in\reals^2$, and a ball $\scriptb\subset\reals^2$ centered at $0$
of fixed positive radius for which 
$A\subset \scriptc+\rho(\delta)\scriptb+u$,
and $B\subset \scriptc+\rho(\delta)\scriptb+v$.
Now $\scriptk=\scriptc+\rho(\delta)\scriptb$ is itself a convex set, which contains $\scriptc$ and satisfies
\[|\scriptk| \le (1+O(\rho(\delta))|\scriptc|\le 1+o_\delta(1).\]
Thus all conclusions of Theorem~\ref{thm:BMmult} have been established, under the supplementary assumption
that $A,B$ are compact.

Thus far, it has been assumed that $A,B$ are compact.
To treat general Borel sets $A,B$, choose compact subsets $\tilde A,\tilde B$
satisfying $|A\setminus\tilde A|<\delta$ and $|B\setminus \tilde B|<\delta$.
Apply the result proved above to the pair $(\tilde A,\tilde B)$ to obtain a convex set $\scriptc$
and ball $\scriptb$ as above. In particular, $\scriptc$ contains translates of $\tilde A,\tilde B$,
but not necessarily translates of $A,B$. However,
since Lemma~\ref{lemma:smalldistancetoK} applies to Borel sets, it follows as before that 
there exists a convex set $\scriptk$ containing $\scriptc$ for which
$A\subset \scriptk+u$, $B\subset \scriptk+v$, and $|\scriptk| \le  1+o_\delta(1)$.
\end{proof}

\end{document}